\newtheorem{proposition}{Proposition}[section]
\newtheorem{lemma}[proposition]{Lemma}
\newtheorem{theorem}[proposition]{Theorem}
\newtheorem{corollary}[proposition]{Corollary}
\newtheorem{definition}[proposition]{Definition}
\newtheorem{remark}[proposition]{Remark}
\numberwithin{equation}{section}
\newcolumntype{M}[1]{>{\raggedright}m{#1}}
\begin{document}
\title{Stability estimates for a Robin coefficient in the two-dimensional Stokes system \footnote{This work was partially funded by the ANR-08-JCJC-013-01 (M3RS) project 
headed by C. Grandmont and the ANR-BLAN-0213-02 (CISIFS) project headed by L. Rosier.}}
\author{Muriel Boulakia \thanks{Universit\'e Pierre et Marie Curie-Paris 6, UMR 7598, Laboratoire Jacques-Louis Lions, 75005 Paris, France \& INRIA, Projet REO, Rocquencourt, BP 105, 78153 Le Chesnay cedex, France} \and Anne-Claire Egloffe  \footnotemark[3]  \and C\'eline Grandmont  \thanks{ INRIA, Projet REO, Rocquencourt, BP 105, 78153 Le Chesnay cedex, France \& Universit\'e Pierre et Marie Curie-Paris 6, UMR 7598, Laboratoire Jacques-Louis Lions, 75005 Paris, France.} }
\maketitle
\begin{abstract}
In this paper, we consider the Stokes equations and we are concerned with the inverse problem of identifying a Robin coefficient on some non accessible part of the boundary from available data on the other part of the boundary. We first study the identifiability of the Robin coefficient and then we establish a stability estimate of logarithm type thanks to a Carleman inequality due to A. L. Bukhgeim~\cite{bukhgeim} and under the assumption that the velocity of a given reference solution stays far from $0$ on a part of the boundary where Robin conditions are prescribed.
\end{abstract}
\textit{Keywords:} Inverse boundary coefficient problem, Stokes system, Robin boundary condition, Identifiability, Carleman inequality, Logarithmic stability estimate.
%
%--------------------------
%--------------------------
%
%--------------------------
%--------------------------
%
\section{Introduction}
Let us consider an open Lipschitz bounded connected domain $\Omega$ of $\mathbb{R}^d$, $d \geq 2$. We assume that the boundary $\partial \Omega$ is composed of two open non-empty parts  $\Gamma_0$  and  $\Gamma_e$ such that $\Gamma_e \cup \Gamma_0 =\partial \Omega$  and  $\overline{\Gamma}_e \cap \overline{\Gamma}_0 = \emptyset$ (Figure~\ref{ouvert} gives an example of such a geometry in dimension 2). 
\par
We denote by $n$ the exterior unit normal to $\Omega$ and let $\displaystyle \tau=(\tau_1, \hdots,\tau_{d-1})$ be $d-1$ vectors of $\mathbb{R}^d$ such that $(n,\tau)$ is an orthogonal basis of $\mathbb{R}^d$.
\begin{figure}[htp]
\begin{center}
\includegraphics[scale=0.17,clip]{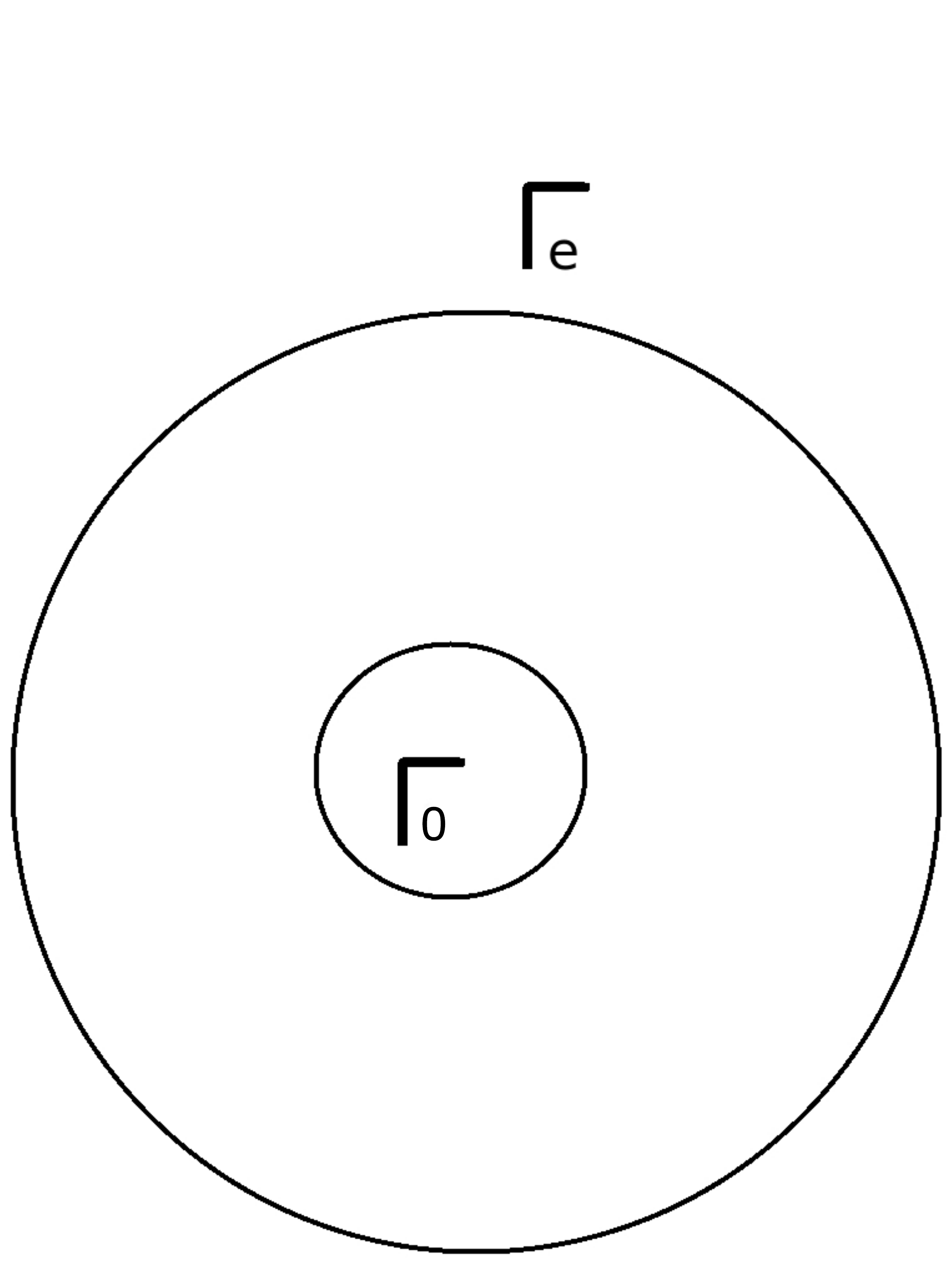}
\caption{Example of an open set $\Omega$ such that $\displaystyle\Gamma_e \cup \Gamma_0 =\partial \Omega$  and  $\displaystyle\overline{\Gamma}_e \cap \overline{\Gamma}_0 = \emptyset$ in dimension 2.}
\label{ouvert}
\end{center}
\end{figure}
\par
We introduce the following boundary problem:
\begin{equation} 
	\left \{   
		\begin{array}{cccl}
                 \displaystyle      \partial_t u(t,x) - \Delta u(t,x) + \nabla p(t,x)			&	=	&	0,							&\forall x \in \Omega,  \forall\, t > 0, \\ 
              \displaystyle                            div \textrm{ } u(t,x) 					&	=	&	0,							& \forall x \in \Omega,  \forall\, t > 0, \\  
             \displaystyle                   \frac{\partial u} {\partial n}(t,x) - p(t,x)n(x)			&	=	&	g(t,x), 						& \forall x \in \Gamma_e,  \forall\, t > 0, \\   
             \displaystyle                     \frac{\partial u} {\partial n}(t,x) - p(t,x)n(x)	+ q(x)u(t,x)&	=	&	0,							& \forall x \in \Gamma_0,  \forall\, t > 0, \\
             \displaystyle                  u(0,x)					&	=	&	u_0(x),									& \forall x \in \Omega.
                   \end{array}     
         \right.
         \label{eq1} 
\end{equation}
Notice that we assume that the Robin coefficient $q$ defined on $\Gamma_0$ only depends on the space variable. Our objective is to determine the coefficient $q$ from the values of $u$ and $p$ on $\Gamma_e$.
\par
Such kinds of systems naturally appear in the modeling of biological problems like, for example, blood flow in the cardiovascular system (see~\cite{quarteroni} and~\cite{irene}) or airflow in the lungs  (see~\cite{Baffico}). For an introduction on the modeling of the airflow in the lungs and on different boundary conditions which may be prescribed, we refer to \cite{egloffe_these}. The part of the boundary $\Gamma_e$ represents a physical boundary on which measurements are available and $\Gamma_0$  represents an artificial boundary on which Robin boundary conditions or mixed boundary conditions involving the fluid stress tensor and its flux at the outlet are prescribed. 
\par
Similar inverse problems have been widely studied for the Laplace equation \cite{alessandrini_rondi}, \cite{ccb}, \cite{chaabane_leblond}, \cite{chaabane_jaoua}, \cite{cheng-choulli-lin} and \cite{sincich}. This kind of problems arises in general in corrosion detection which consists in determining a Robin coefficient on the inaccessible portion of the boundary thanks to electrostatic measurements performed on the accessible boundary. Most of these papers prove a logarithmic stability estimate (\cite{alessandrini_rondi},~\cite{ccb},~\cite{chaabane_leblond} and~\cite{cheng-choulli-lin}). We mention that, in~\cite{chaabane_jaoua}, S. Chaabane and M. Jaoua obtained both local and monotone global Lipschitz stability for regular Robin coefficient and under the assumption that the flux $g$ is non negative. Under the \textit{a priori} assumption that the Robin coefficient is piecewise constant, E. Sincich has obtained in~\cite{sincich} a Lipschitz stability estimate. To prove stability estimates, different approaches are developed in these papers. A first one consists in using the complex analytic function theory (see~\cite{alessandrini_rondi},~\cite{chaabane_leblond}). A characteristic of this method is that it is only valid in dimension 2. Another classical approach is based on Carleman estimates (see~\cite{ccb} and~\cite{cheng-choulli-lin}). In~\cite{ccb}, the authors use a result proved by K.D. Phung in~\cite{phung} to obtain a logarithmic stability estimate which is valid in any dimension for an open set $\Omega$ of class $\mathcal{C}^{\infty}$. This result has been generalized in~\cite{bourgeois} and~\cite{bourgeois_darde} to $\mathcal{C}^{1,1}$ and Lipschitz domains. Moreover, in~\cite{ccb}, the authors use semigroup theory to obtain a stability estimate in long time for the heat equation from the stability estimate for the Laplace equation. 
\par
In this article, we prove an identifiability result  and  a logarithmic stability estimate for the Stokes equations with Robin boundary conditions~\ref{eq1} under  the assumption that the velocity of a given reference solution stays far from $0$ on a part of the boundary where Robin conditions are prescribed. We would like to highlight why this assumption appears for the inverse problem of recovering a Robin coefficient. Let us consider $(u_i,p_i)$ be solutions of system~\ref{eq1} associated to $q=q_i$, for $i=1,2$. Using the boundary conditions on $\Gamma_0$, we obtain 
\begin{equation*}
(q_2-q_1)u_1= q_2(u_1-u_2) + \left(\frac{\partial u_1}{\partial n} -\frac{\partial u_2}{\partial n}\right )-(p_1-p_2)n.
\end{equation*}
When $u_1$ vanishes, difficulties occur to estimate the difference between the Robin coefficients $q_2-q_1$. In the case of the scalar Laplace equation, it is possible to determine the sign of $u$ solution of 
\begin{equation*} 
	\left \{   
		\begin{array}{cccl}
            \displaystyle          \Delta u		&	=	&	0,							&\text{ in }\Omega,  \\ 
             \displaystyle                   \frac{\partial u} {\partial n}			&	=	&	g,						& \text{ on } \Gamma_e, \\   
                \displaystyle                  \frac{\partial u} {\partial n}	+ qu&	=	&	0,							& \text{ on } \Gamma_0, \\
                   \end{array}     
         \right.
\end{equation*}
 on any compact subset $K \subset \Gamma_0$ under some positivity assumption on the flux $g$ (see~\cite{chaabane_jaoua}). Such a result comes from properties specific to harmonic functions, like for instance the maximum principle. When the flux $g$ has a variable sign, G. Alessandrini, L. Del Piero and L. Rondi provide in~\cite{alessandrini_rondi} a quantitative control of the vanishing rate of $u$ that allows to estimate the difference between the Robin coefficients on $\{x \in \Gamma_0 / d(x, \partial \Omega \backslash \Gamma_0) >d)\}$, for any $d>0$,  by using methods of complex analytic function theory. Moreover, G. Alessandrini and E. Sincich proved in~\cite{alessandrini_sincich} that the oscillation of $u$ on $\Gamma_0$ is bounded from below by a constant depending on the \textit{a priori} data only. To do so, they use unique continuation estimates for the Laplace equation.
 Due to the methods employed, it does not seem that we can extend these results to the Stokes system. This is why we estimate the Robin coefficient on a compact subset $K \subset \Gamma_0$ on which $u_1$ does not vanish. This estimate and the set $K$ depend on $u_1$, and knowing whether one can control our solution, for well chosen data, on the whole set $\Gamma_0$ or on any compact subset $K \subset \Gamma_0$  remains an open problem. Note however that in a really particular case (detailed in Remark~\ref{particular_case}), one can obtain a logarithmic estimate on the whole set $\Gamma_0$.
\par
The paper is organized as follows.
The second section contains preliminary results on the regularity of the solution. In the third section, we are interested in the identifiability of the Robin coefficient $q$. Under some regularity assumptions  and  using the theorem of unique continuation for the Stokes equations proved in~\cite{fabre}, we prove that if  two measurements of the velocity are equal on $(0,T) \times \Gamma$, where $\Gamma \subseteq \Gamma_e$ is a non-empty open subset of the boundary, then the two corresponding Robin coefficients are also equal on $\Gamma_0$.
%\par
Section~\ref{logarithmic} corresponds to the main part of our article. The results of this section are only valid in dimension 2. We prove a stability estimate, first for the stationary problem and then for the evolution problem. To do this, we use a global Carleman inequality due to  A. L. Bukhgeim which is only valid in dimension $2$ (see~\cite{bukhgeim}).  The stability estimate for the unsteady problem is deduced from the stability estimate for the stationary problem thanks to the semigroup theory. We end Section~\ref{logarithmic} by concluding remarks and perspectives to this work.
\par
When we are not more specific, $C$ is a generic constant, whose value may change and which only depends on the geometry of the open  set $\Omega$  and of  the boundaries $\Gamma_e$  and  $\Gamma_0$. Moreover, we denote indifferently by $|$ $|$ a norm on $\mathbb{R}^n$,  for any $n \geq 1$.
\par
We are going to start with some preliminary results which will be useful in the subsequent sections.
\section{Preliminary results}
%
%-----------------
%-----------------
%-----------------
%-----------------
%
In this section we study the well--posedness of the system and  the regularity of the solution.
\subsection{Regularity of the stationary problem}
\label{probleme_stat}
Let us first consider the stationary case: 
\begin{equation} \label{eq3}
	\left \{   
		\begin{array}{ccll}
                 \displaystyle      - \Delta u + \nabla p			&	=	&	f,							& \textrm{ in }  \Omega, \\ 
                    \displaystyle                      div \textrm{ } u 		&	=	&	0,							& \textrm{ in }  \Omega, \\  
                 \displaystyle              \frac{\partial u }{\partial n} - pn		&	=	&	g, 							& \textrm{ on  }  \Gamma_e, \\   
                 \displaystyle              \frac{\partial u }{\partial n} - pn	+ qu &	=	&	0,							& \textrm{ on  } \Gamma_0.  \\
                   \end{array}     
         \right. 
 \end{equation}
For $g \in H^{-\frac{1}{2}}(\Gamma_e)^d$ and $v \in H^{\frac{1}{2}}(\Gamma_e)^d$, we denote by $< g ,v>_{ -\frac{1}{2},\frac{1}{2},\Gamma_e } $ the image of $v$ by the linear form $g$.
\par
Let us  introduce some functional spaces: 
\begin{equation*}
V= \left\{ v \in H^1(\Omega)^d / div \textrm{ } v = 0 \textrm{ in } \Omega \right\}
\end{equation*}
 and 
\begin{equation*}
H= \overline{V}^{{L^2(\Omega)}^d}.
\end{equation*}
\begin{proposition} \label{existence1}
Let $\alpha > 0$, $f \in L^2(\Omega)^d$, $g \in H^{-\frac{1}{2}}(\Gamma_e)^d$ and $ q \in  L^{\infty}(\Gamma_0)$ be such that $q \geq \alpha$ on $\Gamma_0$. System~\ref{eq3} admits a unique solution $(u,p) \in V \times L^2(\Omega)$. Moreover, there exists a constant $C(\alpha)>0$ such that
\begin{equation} \label{eq4}
\|u\|_{H^1(\Omega)^d} \leq C(\alpha) (\|g\|_{ H^{-\frac{1}{2}}(\Gamma_e )^d} + \|f\|_{L^2(\Omega)^d}).
\end{equation}
\end{proposition}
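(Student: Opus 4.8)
The plan is to prove existence and uniqueness via the Lax–Milgram theorem applied to a variational formulation on the space $V$, and then recover the pressure.

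\medskip

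\noindent\textbf{Variational formulation.} First I would derive the weak formulation: multiplying the momentum equation by a test function $v \in V$, integrating over $\Omega$, using the divergence-free condition and integrating by parts, the pressure term $\int_\Omega \nabla p \cdot v$ vanishes (since $\operatorname{div} v = 0$ and the boundary terms from $p$ are absorbed into the Neumann-type boundary conditions $\partial u/\partial n - pn$), and the boundary integrals produce $\int_{\Gamma_e} g \cdot v$ on the $\Gamma_e$ part and $\int_{\Gamma_0} q\, u \cdot v$ on the $\Gamma_0$ part. This gives: find $u \in V$ such that
\begin{equation*}
\int_\Omega \nabla u : \nabla v \, dx + \int_{\Gamma_0} q\, u \cdot v \, d\sigma = \int_\Omega f \cdot v \, dx + \langle g, v \rangle_{-\frac12,\frac12,\Gamma_e}, \quad \forall v \in V.
\end{equation*}
Denote the left-hand side $a(u,v)$ and the right-hand side $L(v)$.

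\medskip

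\noindent\textbf{Continuity and coercivity.} The bilinear form $a$ is clearly continuous on $H^1(\Omega)^d \times H^1(\Omega)^d$ using the continuity of the trace operator $H^1(\Omega) \to H^{1/2}(\partial\Omega) \hookrightarrow L^2(\Gamma_0)$ and $q \in L^\infty(\Gamma_0)$; the linear form $L$ is continuous on $H^1(\Omega)^d$ again by trace continuity and the duality pairing. The key point is coercivity: since $q \geq \alpha > 0$ on $\Gamma_0$, we have $a(v,v) \geq \|\nabla v\|_{L^2(\Omega)}^2 + \alpha \|v\|_{L^2(\Gamma_0)}^2$. I then need a Poincaré-type inequality: there exists $C>0$ such that $\|v\|_{H^1(\Omega)}^2 \leq C\big(\|\nabla v\|_{L^2(\Omega)}^2 + \|v\|_{L^2(\Gamma_0)}^2\big)$ for all $v \in H^1(\Omega)^d$. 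This follows from a standard compactness/contradiction argument (Peetre–Tartar lemma): if it failed, there would be a sequence $v_k$ with $\|v_k\|_{H^1}=1$, $\|\nabla v_k\|_{L^2}\to 0$, $\|v_k\|_{L^2(\Gamma_0)}\to 0$; by Rellich compactness a subsequence converges in $L^2(\Omega)$ to a constant vector $c$ (since $\nabla v_k \to 0$), which must have zero trace on $\Gamma_0$, hence $c=0$, contradicting $\|v_k\|_{H^1}=1$. Thus $a$ is coercive on $V$ (which is a closed subspace of $H^1(\Omega)^d$, hence a Hilbert space), Lax–Milgram gives a unique $u \in V$, and combining coercivity with continuity of $L$ yields the estimate~\eqref{eq4} with $C(\alpha)$ deteriorating as $\alpha \to 0$ (because the Poincaré constant above is uniform but the factor extracted from $a(v,v) \geq \min(1,\alpha)(\|\nabla v\|^2 + \|v\|_{L^2(\Gamma_0)}^2)$ introduces the $\alpha$-dependence).

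\medskip

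\noindent\textbf{Recovering the pressure.} Having $u$, the momentum residual $f + \Delta u$ defines a distribution that annihilates all divergence-free test functions in $H^1_0$ (and more generally, using the weak formulation, the functional $v \mapsto L(v) - \int_\Omega \nabla u : \nabla v - \int_{\Gamma_0} q u \cdot v$ vanishes on $V$); by the de Rham theorem / Nečas inequality (the surjectivity of $\operatorname{div}: H^1 \to L^2$ up to constants, or rather the characterization of the polar of the divergence-free subspace as gradients), there exists $p \in L^2(\Omega)$ with $-\Delta u + \nabla p = f$ in the distributional sense, and then the boundary conditions $\partial u/\partial n - pn = g$ on $\Gamma_e$ and $+qu$ on $\Gamma_0$ are recovered by integration by parts against general $v \in H^1(\Omega)^d$. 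Uniqueness of $p$ follows since two choices differ by a constant whose gradient is zero, but here — unlike the pure Dirichlet case — the Neumann-type boundary condition on $\Gamma_e$ pins down the additive constant, so $p$ is in fact unique in $L^2(\Omega)$.

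\medskip

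\noindent\textbf{Main obstacle.} The technical heart is the generalized Poincaré inequality controlling the full $H^1$ norm by $\|\nabla v\|_{L^2(\Omega)} + \|v\|_{L^2(\Gamma_0)}$ on a merely Lipschitz domain; the compactness argument is standard but requires care with traces on Lipschitz boundaries and with the vector-valued (non divergence-free in the test-space sense is fine, but we restrict to $V$) setting. A secondary subtlety is making the pressure recovery rigorous on a Lipschitz domain and correctly identifying the weak boundary conditions — one must verify that the normal-stress boundary terms are well-defined as elements of $H^{-1/2}$ once $-\Delta u + \nabla p \in L^2(\Omega)$ is known.
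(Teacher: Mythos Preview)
Your proposal is correct and follows essentially the same route as the paper: variational formulation on $V$, coercivity of $a_q$ via the generalized Poincar\'e inequality (which the paper merely invokes by name, while you sketch the compactness argument), Lax--Milgram for existence, uniqueness and the estimate, and then De Rham's theorem for the pressure with uniqueness fixed by the boundary conditions. The only difference is granularity: you spell out the Poincar\'e proof and the boundary-condition recovery, whereas the paper refers to the literature for these points.
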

\begin{proof}[Proof of Proposition~\ref{existence1}]
The variational formulation of the problem is: find $ u \in V $ such that for every $ v \in V $, 
\begin{equation*} 
\int_{\Omega } \nabla u: \nabla v + \int_{\Gamma_0} q u \cdot v = < g , v_{|\Gamma_e}>_{ -\frac{1}{2},\frac{1}{2},\Gamma_e }  + \int_{\Omega} f \cdot v.
\end{equation*}
For all $(u,v) \in V \times V$, we denote by
\begin{equation} \label{eq5}
 a_q(u,v)=\int_{\Omega } \nabla u: \nabla v + \int_{\Gamma_0} q u \cdot v ,
 \end{equation}
 and for all $v \in V$,
\begin{equation*}
L_1(v)=< g , v_{|\Gamma_e}>_{ -\frac{1}{2},\frac{1}{2},\Gamma_e }  +\int_{\Omega} f \cdot v.
\end{equation*}
We easily verify that $a_q$ is a continuous symmetric bilinear form. Since $q \geq \alpha > 0$, according to the generalized Poincar\'e inequality, the bilinear form $a_q$ is coercive on $V$. On the other hand, $ L_1 $ is a continuous linear form on $V$. Thus we prove the existence and uniqueness of $ u \in V $ solution of \ref{eq3} by using Lax-Milgram Theorem. We obtain simultaneously estimate $\ref{eq4}$. 
We prove the existence and uniqueness of $p \in L^2_0(\Omega)$ in a classical way, using De Rham Theorem. The fact that $p$ is unique in $L^2(\Omega)$ comes from the boundary conditions. We refer to~\cite{fabrie} for a complete proof in the case of Neumann boundary condition.
\end{proof}
Next we want to derive regularity properties of the solution.
Let us first recall existence and regularity results for the Stokes problem with Neumann boundary condition proved in~\cite{fabrie}.
\begin{proposition} \label{regularite_neumann}
Let  $k \in \mathbb{N}$. Assume that $\Omega$ is of class $\mathcal{C}^{k+1,1}$. We assume that: 
\begin{equation*}
(f,h) \in H^k(\Omega)^d \times H^{k+\frac{1}{2}}(\partial \Omega)^d.
\end{equation*}
Then the solution $(u,p)$ of
\begin{equation*} 
	\left \{   
		\begin{array}{ccll}
                \displaystyle        - \Delta u + \nabla p			&	=	&	f,							& \textrm{ in }  \Omega, \\ 
                  \displaystyle                        div \textrm{ } u 		&	=	&	0	,						& \textrm{ in }  \Omega, \\  
                  \displaystyle             \frac{\partial u }{\partial n} - pn		&	=	&	h 	,						& \textrm{ on  }  \partial \Omega, \\   
                   \end{array}     
         \right. 
\end{equation*}
belongs to $H^{k+2}(\Omega)^d \times H^{k+1}(\Omega)$ and there exists a constant $C>0$ such that: 
\begin{equation*}
\|u\|_{{H^{k+2}(\Omega)}^d}+ \|p\|_{H^{k+1}(\Omega)} \leq C(\|h\|_{H^{k+\frac{1}{2}}(\partial \Omega)^d } + \|f\|_{H^k(\Omega)^d}).
\end{equation*}
\end{proposition}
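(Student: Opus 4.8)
This is the standard elliptic regularity theory for the Stokes operator supplemented with the stress (Neumann-type) boundary condition $\frac{\partial u}{\partial n} - pn = h$, established in~\cite{fabrie}; the plan one would follow is to treat this system as an elliptic boundary value problem in the sense of Agmon--Douglis--Nirenberg and to combine interior regularity with boundary regularity obtained by localization and flattening of $\partial\Omega$, bootstrapping on $k$. First I would start from a weak solution $(u,p) \in H^1(\Omega)^d \times L^2(\Omega)$, whose existence and a priori bound follow exactly as in the Robin case of Proposition~\ref{existence1}, with the boundary term $\int_{\Gamma_0} q\, u\cdot v$ replaced by the Neumann data over the whole of $\partial\Omega$.

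For the base case $k=0$, I would first establish interior regularity on any open set $\omega$ with $\overline{\omega}\subset\Omega$ by the difference-quotient method for the Stokes system, obtaining $(u,p)\in H^2(\omega)^d\times H^1(\omega)$ with a local estimate. Near the boundary I would cover $\partial\Omega$ by finitely many charts and flatten it by $\mathcal{C}^{1,1}$ diffeomorphisms; in each chart $(u,p)$ then solves, on a half-ball, a Stokes-type system with $L^\infty\cap H^1$ coefficients and a boundary condition of the same structure on the flat piece. Taking tangential difference quotients, I would bound the tangential derivatives of $\nabla u$ and of $p$ in $L^2$ by $\|f\|_{L^2}$, $\|h\|_{H^{1/2}}$ and lower-order terms, and then recover the purely normal second derivatives of $u$ and the normal derivative of $p$ algebraically from $-\Delta u + \nabla p = f$ and $\mathrm{div}\, u = 0$. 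A partition of unity, together with absorption of the lower-order contributions using the $H^1\times L^2$ bound and interpolation, would give $\|u\|_{H^2(\Omega)^d} + \|p\|_{H^1(\Omega)} \le C(\|f\|_{L^2(\Omega)^d} + \|h\|_{H^{1/2}(\partial\Omega)^d})$.

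Then I would argue by induction on $k$. Assuming the result up to order $k-1$, with $\Omega$ of class $\mathcal{C}^{k+1,1}$ and $(f,h)\in H^k(\Omega)^d\times H^{k+1/2}(\partial\Omega)^d$, the inductive hypothesis already gives $(u,p)\in H^{k+1}(\Omega)^d\times H^k(\Omega)$. Differentiating the flattened system tangentially, the pair $(\partial_\tau u,\partial_\tau p)$ solves a system of the same type with data in $H^{k-1}\times H^{k-1/2}$ --- the terms in which a derivative falls on a variable coefficient being of lower order and absorbed, which is exactly where the $\mathcal{C}^{k+1,1}$ regularity of $\Omega$ is used --- so the tangential derivatives of $u$ belong to $H^{k+1}$; the remaining normal derivatives come once more from the momentum equation and the incompressibility constraint, giving $u\in H^{k+2}(\Omega)^d$. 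Finally $\nabla p = \Delta u + f \in H^k(\Omega)^d$ yields $p\in H^{k+1}(\Omega)$, the additive constant being fixed by the boundary condition, and collecting the estimates gives the announced inequality.

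The hard part will be the boundary analysis: one has to verify that the stress operator together with $\mathrm{div}$ satisfies the Lopatinski--Shapiro complementing condition, so that once the tangential derivatives are controlled the normal ones are genuinely determined by the system, and one has to keep track of the pressure, which carries no boundary condition of its own (only an additive constant, here fixed by the normal component of the boundary data). Making all the constants uniform through the flattening and through the induction is the bookkeeping-heavy part; the complete argument is the one carried out in~\cite{fabrie}.
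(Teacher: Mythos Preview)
Your proposal is correct in spirit and, in fact, goes further than the paper does: the paper does not prove this proposition at all but simply recalls it from~\cite{fabrie}, exactly as you acknowledge in your first sentence. Your sketch of the Agmon--Douglis--Nirenberg-type argument (interior regularity, boundary flattening, tangential difference quotients, recovery of normal derivatives from the equations, induction on $k$) is the standard route and is consistent with what~\cite{fabrie} carries out, so there is nothing to correct.
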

%-----------------
%-----------------
%-----------------
%-----------------
%
%
%
In order to study the Stokes system with Robin boundary conditions, one needs to specify to which space the Robin coefficient $q$ belongs. As stated in Proposition \ref{regstat}, we will assume that $q$ belongs to some Sobolev space $H^s(\Gamma_{0})$ where $s$ is large enough so that $qu_{|\Gamma_{0}}$ belongs to $H^r(\Gamma_{0})$ if $u_{|\Gamma_{0}}$ belongs to $H^r(\Gamma_{0})$. This stability in the Sobolev spaces will allow to apply the previous proposition (Proposition~\ref{regularite_neumann}). Before stating the regularity result, let us state the following lemma: 
\begin{lemma} \label{mpojsdhhbg}
Let $r, s \in\mathbb{R}$, with $s>\frac{d-1}{2}$ and $0\leq r \leq s$. Let $q \in H^s(\Gamma_0)$. The linear operator
\begin{equation*}
\begin{array}{rcl}
T: H^r(\Gamma_0) &\to &H^{r}(\Gamma_0) \\
u &\mapsto& qu
\end{array}
\end{equation*}
is continuous. Furthermore, the following estimate holds true 
\begin{equation*}
\|qu\|_{H^{r}(\Gamma_0)} \leq C \|q\|_{H^{s}(\Gamma_0)}\|u\|_{H^{r}(\Gamma_0)}.
\end{equation*}
\end{lemma}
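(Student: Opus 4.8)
The plan is to prove this multiplication estimate by reducing to the standard fact that $H^s(\mathbb{R}^{d-1})$ is a Banach algebra when $s > \frac{d-1}{2}$, together with the corresponding bilinear estimate $\|fg\|_{H^r} \leq C\|f\|_{H^s}\|g\|_{H^r}$ valid for $0 \leq r \leq s$ and $s > \frac{d-1}{2}$. Since $\Gamma_0$ is a smooth compact $(d-1)$-dimensional manifold (the boundary of $\Omega$, assumed of class $\mathcal{C}^{k+1,1}$ in the relevant regularity statements), Sobolev spaces on $\Gamma_0$ are defined via a finite atlas of charts and a subordinate partition of unity, and it suffices to establish the estimate locally in each chart where $\Gamma_0$ is flattened to an open subset of $\mathbb{R}^{d-1}$.

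First I would recall the flat-space estimate. For $s > \frac{d-1}{2}$ and $0 \leq r \leq s$, one has for all $v, w \in \mathcal{S}(\mathbb{R}^{d-1})$
\begin{equation*}
\|vw\|_{H^r(\mathbb{R}^{d-1})} \leq C(d,r,s)\,\|v\|_{H^s(\mathbb{R}^{d-1})}\,\|w\|_{H^r(\mathbb{R}^{d-1})},
\end{equation*}
which is proved by writing the product on the Fourier side, using the elementary inequality $\langle \xi \rangle^r \leq C(\langle \xi - \eta \rangle^r + \langle \eta \rangle^r)$, splitting the convolution integral accordingly, and applying Cauchy--Schwarz together with $\int \langle \zeta \rangle^{-2s}\,d\zeta < \infty$ (this is exactly where $s > \frac{d-1}{2}$ enters) and Young's inequality for the remaining convolution. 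This is classical and I would simply cite it rather than reproduce it.

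Next I would transfer to $\Gamma_0$. Fix a finite atlas $\{(U_j, \phi_j)\}_{j=1}^N$ of $\Gamma_0$ with diffeomorphisms $\phi_j : U_j \to V_j \subset \mathbb{R}^{d-1}$ and a subordinate partition of unity $\{\chi_j\}$ with $\chi_j \in \mathcal{C}^\infty_c(U_j)$, $\sum_j \chi_j = 1$ on $\Gamma_0$. By definition of the Sobolev norms on $\Gamma_0$, $\|u\|_{H^r(\Gamma_0)}$ is equivalent to $\sum_j \|(\chi_j u)\circ \phi_j^{-1}\|_{H^r(\mathbb{R}^{d-1})}$, and likewise with $r$ replaced by $s$. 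Then $(\chi_j(qu))\circ\phi_j^{-1} = \big((q \circ \phi_j^{-1})(\chi_j u)\circ\phi_j^{-1}\big)$ after writing $\chi_j = \chi_j \tilde\chi_j$ with $\tilde\chi_j \in \mathcal{C}^\infty_c(U_j)$ identically $1$ on $\mathrm{supp}\,\chi_j$, so that one multiplies by $(q\tilde\chi_j)\circ\phi_j^{-1}$, a compactly supported $H^s$ function on $\mathbb{R}^{d-1}$ with norm controlled by $\|q\|_{H^s(\Gamma_0)}$. Applying the flat estimate in each chart and summing over $j$ yields
\begin{equation*}
\|qu\|_{H^r(\Gamma_0)} \leq C \sum_j \|(q\tilde\chi_j)\circ\phi_j^{-1}\|_{H^s(\mathbb{R}^{d-1})}\,\|(\chi_j u)\circ\phi_j^{-1}\|_{H^r(\mathbb{R}^{d-1})} \leq C\|q\|_{H^s(\Gamma_0)}\|u\|_{H^r(\Gamma_0)},
\end{equation*}
which is the desired bound, and in particular shows $T$ is well-defined and continuous from $H^r(\Gamma_0)$ to $H^r(\Gamma_0)$ (density of smooth functions gives the estimate for general $u$).

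The main obstacle is essentially bookkeeping rather than a deep difficulty: one must make sure the localization by the partition of unity and the pullback by the chart diffeomorphisms interact correctly with the product structure — in particular that multiplying by $q$ commutes appropriately with cutting off by $\chi_j$, which is handled by the $\chi_j = \chi_j\tilde\chi_j$ trick above — and that the constants from the (finitely many) charts, the equivalence of norms, and the diffeomorphism-invariance of $H^r$ and $H^s$ on bounded domains are all absorbed into the single geometric constant $C$. The only genuinely analytic input is the flat-space bilinear Sobolev estimate, whose proof hinges on the hypothesis $s > \frac{d-1}{2}$ ensuring integrability of $\langle\zeta\rangle^{-2s}$ over $\mathbb{R}^{d-1}$; the condition $0 \leq r \leq s$ is what makes the two-term splitting of $\langle\xi\rangle^r$ usable.
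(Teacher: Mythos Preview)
Your proposal is correct, but the paper takes a noticeably shorter and more abstract route: it proves the boundedness of $T$ only at the two endpoints $r=0$ and $r=s$ and then interpolates. At $r=s$ one uses that $H^s(\Gamma_0)$ is a Banach algebra (since $s>\tfrac{d-1}{2}$), giving $\|T\|_s\le\|q\|_{H^s(\Gamma_0)}$; at $r=0$ one uses the Sobolev embedding $H^s(\Gamma_0)\hookrightarrow L^\infty(\Gamma_0)$, giving $\|T\|_0\le\|q\|_{L^\infty(\Gamma_0)}\le C\|q\|_{H^s(\Gamma_0)}$. Complex (or real) interpolation between $L^2(\Gamma_0)$ and $H^s(\Gamma_0)$ then yields the estimate for every $0\le r\le s$. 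This avoids any explicit chart-by-chart localization and imports only two elementary facts rather than the full bilinear Sobolev multiplication theorem. Your approach, by contrast, transplants the flat-space bilinear estimate to the manifold via an atlas and partition of unity; this is perfectly valid and makes the geometry explicit, but the black box you cite is strictly stronger than what is needed --- and, incidentally, its standard proofs often go through exactly the same endpoint-plus-interpolation argument the paper uses. A minor remark: your Fourier-side sketch of the flat estimate is a bit loose (the term carrying $\langle\xi-\eta\rangle^r|\hat f(\xi-\eta)|\,|\hat g(\eta)|$ does not close directly by Young's inequality, since one only has $\langle\cdot\rangle^r\hat g\in L^2$, not $\hat g\in L^1$), but since you explicitly say you would cite the result rather than reprove it, this does not constitute a gap in your proposal.
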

\begin{proof}[Proof of Lemma~\ref{mpojsdhhbg}]
Since $s>\frac{d-1}{2}$, $H^s(\Gamma_0)$ is a Banach algebra (see~\cite{adams_fournier})  and thus $T \in \mathcal{L}(H^s(\Gamma_0),H^s(\Gamma_0))$ and $ \|T\|_s =\sup_{u \in H^s(\Gamma_0), u \not=0}\frac{ \|Tu\|_{H^s(\Gamma_0)} }{\|u\|_{H^s(\Gamma_0)}}\leq  \|q\|_{H^s(\Gamma_0)}  $. Moreover, since $H^s(\Gamma_0) \hookrightarrow L^{\infty}(\Gamma_0)$, $T \in \mathcal{L}(L^2(\Gamma_0),L^2(\Gamma_0))$ and 
\begin{equation*}
\|T\|_0 =\sup_{u \in L^2(\Gamma_0), u \not=0} \frac{\|Tu\|_{L^2(\Gamma_0)} }{\|u\|_{L^2(\Gamma_0)}}\leq  \|q\|_{L^{\infty}(\Gamma_0)}\leq C\|q\|_{H^s(\Gamma_0)}.
\end{equation*} 
Thus, the result follows  by interpolation (see~\cite{Bergh} or~\cite{lunardi_interp}).
\end{proof}
From Proposition~\ref{regularite_neumann}   and Lemma~\ref{mpojsdhhbg} we deduce the following result:
\begin{proposition} \label{regstat}
Let  $k \in \mathbb{N}$ and $s\in \mathbb{R}$ with $s >\frac{d-1}{2}$ and $s \geq k+\frac{1}{2}$. Assume that $\Omega$ is  of class $\mathcal{C}^{k+1,1}$. Let  $\alpha > 0$, $M >0$, $f \in H^k(\Omega)^d$, $g \in H^{k+\frac{1}{2}}(\Gamma_e)^d$ and $q \in  H^{s}(\Gamma_0)$ such that  $ q \geq \alpha$ on  $\Gamma_0$. Then the solution $(u,p)$ of system~\ref{eq3}  belongs to  $H^{k+2}(\Omega)^d \times H^{k+1}(\Omega)$. Moreover, there exists a  constant $C(\alpha,M)>0$ such that for every $q \in H^s(\Gamma_0) $ satisfying $\|q\|_{H^s(\Gamma_0) } \leq M$,
\begin{equation*}
\|u\|_{H^{k+2}(\Omega)^d} + \|p\|_{H^{k+1}(\Omega)} \leq C(\alpha,M) (\|g\|_{ H^{k+\frac{1}{2}}(\Gamma_e)^d} + \|f\|_{H^k(\Omega)^d}).
\end{equation*}
\end{proposition}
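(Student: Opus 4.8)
The plan is to run a bootstrap argument, raising the regularity of $u$ one half-derivative at a time and using Lemma~\ref{mpojsdhhbg} to keep the Robin term under control, then feeding the resulting trace into the Neumann regularity result (Proposition~\ref{regularite_neumann}). First, Proposition~\ref{existence1} already gives $(u,p)\in V\times L^2_0(\Omega)$ together with the bound $\|u\|_{H^1(\Omega)^d}\le C(\alpha)(\|g\|_{H^{-1/2}(\Gamma_e)^d}+\|f\|_{L^2(\Omega)^d})$; since $g\in H^{k+1/2}(\Gamma_e)^d\hookrightarrow H^{-1/2}(\Gamma_e)^d$ and $f\in H^k(\Omega)^d\hookrightarrow L^2(\Omega)^d$, this is the base case. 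The idea is then: if we know $u\in H^m(\Omega)^d$ for some $1\le m\le k+1$, the trace theorem gives $u_{|\partial\Omega}\in H^{m-1/2}(\partial\Omega)^d$, so $u_{|\Gamma_0}\in H^{m-1/2}(\Gamma_0)^d$, and since $s>\tfrac{d-1}{2}$ and $0\le m-1/2\le s$ (this is where the hypothesis $s\ge k+\tfrac12$ is used, to guarantee $m-\tfrac12\le k+\tfrac12\le s$), Lemma~\ref{mpojsdhhbg} yields $qu_{|\Gamma_0}\in H^{m-1/2}(\Gamma_0)^d$ with norm controlled by $\|q\|_{H^s(\Gamma_0)}\|u\|_{H^m(\Omega)^d}$.

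Next I would rewrite system~\ref{eq3} as a Stokes system with a \emph{pure} Neumann condition on all of $\partial\Omega$: define $h$ on $\partial\Omega$ by $h=g$ on $\Gamma_e$ and $h=-qu_{|\Gamma_0}$ on $\Gamma_0$. Because $\overline{\Gamma}_e\cap\overline{\Gamma}_0=\emptyset$, these two pieces live on disjoint closed sets, so $h\in H^{\min(m-1/2,\,k+1/2)}(\partial\Omega)^d$ with the corresponding norm bound — no compatibility/matching condition at an interface is needed, which is the technical point that makes the gluing clean. Applying Proposition~\ref{regularite_neumann} with $k$ replaced by $\min(m-1/2,k+1/2)-1/2$ then gives $u\in H^{\min(m-1/2,k+1/2)+3/2}(\Omega)^d$, i.e. we gain $1/2$ a derivative as long as $m-1/2\le k+1/2$, together with the estimate
\begin{equation*}
\|u\|_{H^{m+1/2}(\Omega)^d}+\|p\|_{H^{m-1/2}(\Omega)}\le C\bigl(\|g\|_{H^{k+1/2}(\Gamma_e)^d}+\|f\|_{H^k(\Omega)^d}+\|q\|_{H^s(\Gamma_0)}\|u\|_{H^m(\Omega)^d}\bigr).
\end{equation*}
Iterating from $m=1$ up to $m=k+1$ (finitely many steps, each gaining $1/2$) reaches $(u,p)\in H^{k+2}(\Omega)^d\times H^{k+1}(\Omega)$.

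Finally I would chain the estimates. At each step the right-hand side involves $\|u\|_{H^m(\Omega)^d}$, which by the previous step is bounded by $C(\alpha)\bigl(1+\|q\|_{H^s(\Gamma_0)}\bigr)^{\text{(step count)}}(\|g\|_{H^{k+1/2}(\Gamma_e)^d}+\|f\|_{H^k(\Omega)^d})$; since there are only finitely many steps and $\|q\|_{H^s(\Gamma_0)}\le M$, all the accumulated constants and powers of $\|q\|_{H^s(\Gamma_0)}$ collapse into a single constant $C(\alpha,M)$, giving the claimed bound. The main obstacle — and the only place any real care is needed — is bookkeeping the indices in the bootstrap so that the hypotheses of both Lemma~\ref{mpojsdhhbg} ($0\le r\le s$, i.e. the trace exponent never exceeds $s$) and Proposition~\ref{regularite_neumann} (the domain is $\mathcal{C}^{k+1,1}$, so one may not ask for more than $H^{k+2}$ interior regularity) are respected at every stage; the disjointness of $\Gamma_e$ and $\Gamma_0$ is what spares us any delicate interface analysis, so beyond the index juggling the argument is routine.
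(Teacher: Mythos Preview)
Your approach is correct and is essentially the paper's own proof: bootstrap from the $H^1$ bound of Proposition~\ref{existence1}, use Lemma~\ref{mpojsdhhbg} to put $qu_{|\Gamma_0}$ in the right trace space, glue with $g$ (the disjointness $\overline{\Gamma}_e\cap\overline{\Gamma}_0=\emptyset$ making this painless), and feed the resulting Neumann datum into Proposition~\ref{regularite_neumann}. One small arithmetic slip: your own formula $H^{\min(m-1/2,\,k+1/2)+3/2}$ actually yields $H^{m+1}$ (not $H^{m+1/2}$) when $m\le k+1$, so each step gains a \emph{full} derivative---this is exactly the paper's integer induction on $k$, and it also spares you from invoking Proposition~\ref{regularite_neumann} at non-integer indices.
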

\begin{proof}[Proof of Proposition~\ref{regstat}]
Let us prove the result for $k=0$. Let $h=-qu_{|\Gamma_0}  + g$. According to Proposition~\ref{existence1}, $u$ belongs to $H^1(\Omega)^d$. We obtain from Lemma~\ref{mpojsdhhbg} for $r=1/2$ that $qu_{|\Gamma_0} \in H^{\frac{1}{2}}(\Gamma_0)^d$ , which implies,  since $g\in H^{\frac{1}{2}}(\Gamma_e)^d$  and  $\overline{\Gamma}_e \cap \overline{\Gamma}_0 = \emptyset$, that $h \in H^{\frac{1}{2}}(\partial \Omega)^d$. 
Using Proposition~\ref{regularite_neumann} with $k=0$ we obtain that $(u,p) \in H^2(\Omega)^d \times H^1(\Omega)$ and: 
\begin{equation*}
\|u\|_{{H^{2}(\Omega)}^d}+ \|p\|_{H^{1}(\Omega)}  \leq  C(\|h \|_{H^{\frac{1}{2}}(\partial \Omega)^d } + \|f\|_{L^2(\Omega)^d}).
\end{equation*}
But, since by assumption, $\|q\|_{H^{ s}(\Gamma_0) } \leq M$, we have from Lemma~\ref{mpojsdhhbg} with $r=1/2$, that:
\begin{equation*} \label{ine}
\|h\|_{H^{\frac{1}{2}}(\partial \Omega)^d  } \leq C (M)( \|u\|_{H^{\frac{1}{2}}(\partial \Omega)^d }+ \|g\|_{H^{\frac{1}{2}}(\Gamma_e)^d}).
\end{equation*}
We obtain:  
\begin{equation*}
\|u\|_{{H^{2}(\Omega)}^d}+ \|p\|_{H^{1}(\Omega)} \leq  C(M)(\|g\|_{H^{\frac{1}{2}}(\Gamma_e)^d }+ \|u \|_{H^1(\Omega)^d } + \|f\|_{L^2(\Omega)^d}).
\end{equation*}
Thus we obtain the result for $k=0$ using the inequality of Proposition~\ref{existence1}. We then proceed by induction to prove the result for any $k \in \mathbb{N}$.  
%We emphasize that the key argument is that if $u \in H^{1+k}(\Omega)$, then $u_{| \Gamma_{out}} \in H^{k+\frac{1}{2}}(\Gamma_{out})$ which implies that $q u_{| \Gamma_{out}} \in H^{k+\frac{1}{2} }(\Gamma_{out})$ thanks to Lemma~\ref{mpojsdhhbg}. 
%
\end{proof}
\begin{remark}
Note that the space to which the Robin coefficient $q$ belongs is not optimal. One could surely obtain similar regularity result for a less regular Robin coefficient.  In fact, the key argument to proceed by induction in the proof of Proposition~\ref{regstat} is that  $q u_{| \Gamma_0} \in H^{k+\frac{1}{2} }(\Gamma_0)^d$, for $u \in H^{k+\frac{1}{2}}(\Gamma_0)^d$ (this property allows to apply the regularity result given by Proposition~\ref{regularite_neumann}).
\end{remark}
%
%
%--------------------------
%--------------------------
\subsection{Regularity of the evolution problem.}
Concerning the initial problem~\ref{eq1}, we can prove, using the Galerkin method, the following regularity results. For the sake of completeness, the proof of Theorem~\ref{reg2} is given in the appendix. 
\begin{theorem} \label{reg2}
Let $s \in \mathbb{R}$ be such that $s>\frac{d-1}{2}$, $T > 0$, $\alpha >0$ and  $u_0 \in V$. We assume that $\Omega$ is of class $\mathcal{C}^{1,1}$, $g \in H^1(0,T;H^\frac{1}{2}(\Gamma_e)^d)$ and $ q \in  H^{s}(\Gamma_0)$ is such that  $ q \geq \alpha$ on $\Gamma_0$. Then problem~\ref{eq1} admits a unique solution $(u,p) \in L^2(0,T;H^2(\Omega)^d) \cap H^1(0,T;L^2(\Omega)^d)\cap  L^{\infty}(0,T;V) \times  L^2(0,T;H^1(\Omega))$.
\end{theorem}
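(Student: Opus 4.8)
\textbf{Proof proposal for Theorem~\ref{reg2}.}

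The plan is to use the Galerkin method with a special basis adapted to the divergence-free constraint and the Robin boundary condition. First I would pick a Hilbert basis $(w_j)_{j\ge 1}$ of the space $V$ (equipped with the $H^1(\Omega)^d$ inner product) consisting of the eigenfunctions of the stationary Stokes operator with the Robin boundary conditions of~\ref{eq3}, i.e. the functions furnished by the spectral theory associated to the coercive bilinear form $a_q$ from~\eqref{eq5}; by Proposition~\ref{regstat} applied with $f=w_j$ these eigenfunctions are as smooth as $\Omega$ allows (here $H^2(\Omega)^d$ since $\Omega$ is $\mathcal{C}^{1,1}$), which is what makes the higher-order a priori estimates legitimate. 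For fixed $m$, seek $u_m(t)=\sum_{j=1}^m c_j^m(t)\,w_j$ solving the finite-dimensional ODE system obtained by testing~\ref{eq1} against each $w_j$, with $u_m(0)$ the projection of $u_0$ onto $\mathrm{span}(w_1,\dots,w_m)$; the Cauchy--Lipschitz theorem gives a local solution, and the first energy estimate below extends it to $[0,T]$.

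Next I would derive the a priori estimates, in increasing order of strength. Testing the Galerkin system with $u_m$ and using coercivity of $a_q$ (via the generalized Poincar\'e inequality, as in Proposition~\ref{existence1}) together with the trace inequality and Young's inequality on the boundary term coming from $g$ yields the bound on $u_m$ in $L^\infty(0,T;L^2(\Omega)^d)\cap L^2(0,T;V)$, controlled by $\|u_0\|_{L^2}$ and $\|g\|_{L^2(0,T;H^{1/2}(\Gamma_e)^d)}$. For the second estimate, test with $\partial_t u_m$: the term $\int_\Omega \nabla u_m:\nabla\partial_t u_m$ becomes $\frac12\frac{d}{dt}\|\nabla u_m\|_{L^2}^2$, the Robin term $\int_{\Gamma_0} q\,u_m\cdot\partial_t u_m$ becomes $\frac12\frac{d}{dt}\int_{\Gamma_0} q|u_m|^2$ (here $q$ time-independent and $q\ge\alpha$ are used to keep this quantity nonnegative and coercive), and the source term is handled using $g\in H^1(0,T;H^{1/2}(\Gamma_e)^d)$ after an integration by parts in time on $\int_{\Gamma_e} g\cdot\partial_t u_m$, transferring the time derivative onto $g$. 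Combined with $u_0\in V$ for the initial data of $\|\nabla u_m(0)\|_{L^2}$, this gives $u_m$ bounded in $L^\infty(0,T;V)$ and $\partial_t u_m$ bounded in $L^2(0,T;L^2(\Omega)^d)$. Finally, viewing the equation at fixed $t$ as a stationary Stokes--Robin problem with right-hand side $f=-\partial_t u_m\in L^2(\Omega)^d$ and boundary data $g(t)\in H^{1/2}(\Gamma_e)^d$, Proposition~\ref{regstat} (with $k=0$, using $s>\frac{d-1}{2}$ and $\|q\|_{H^s}\le M$) upgrades the bound to $u_m\in L^2(0,T;H^2(\Omega)^d)$ with an associated pressure $p_m\in L^2(0,T;H^1(\Omega))$.

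With these uniform bounds I would extract a subsequence converging weakly-$*$ in the corresponding spaces, pass to the limit in the (linear) Galerkin equations using standard weak-convergence and compactness arguments (Aubin--Lions for the strong $L^2(0,T;L^2)$ convergence of $u_m$ needed to identify traces), recover the pressure $p$ by De Rham's theorem as in Proposition~\ref{existence1}, and check that the limit satisfies~\ref{eq1} together with the initial condition $u(0)=u_0$, the latter making sense because $u\in H^1(0,T;L^2(\Omega)^d)\hookrightarrow C([0,T];L^2(\Omega)^d)$. Uniqueness follows from linearity: the difference of two solutions solves~\ref{eq1} with $g=0$, $u_0=0$, and the first energy estimate forces it to vanish. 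I expect the main technical obstacle to be the $\partial_t u_m$-test estimate: one must handle the boundary integral $\int_{\Gamma_e} g\cdot\partial_t u_m$ without losing control (hence the time integration by parts and the hypothesis $g\in H^1(0,T;H^{1/2}(\Gamma_e)^d)$), and one must make sure the Galerkin basis is regular enough ($w_j\in H^2$, from Proposition~\ref{regstat}) for the stationary elliptic-regularity bootstrap to apply at each time slice; the low boundary regularity ($\mathcal{C}^{1,1}$) is exactly what limits the conclusion to $H^2$ in space.
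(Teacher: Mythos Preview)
Your strategy is essentially the paper's: Galerkin approximation, energy estimate by testing with $u_m$, stronger estimate by testing with $\partial_t u_m$ (with the time integration by parts on $\int_{\Gamma_e} g\cdot\partial_t u_m$ to exploit $g\in H^1(0,T;H^{1/2})$), then stationary elliptic regularity via Proposition~\ref{regstat}, and uniqueness by the linear energy argument. The paper actually uses a generic Hilbert basis of $V$ that is orthogonal in $H$ rather than insisting on the $A_q$-eigenfunctions, but this makes no real difference.

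There is one genuine slip in your bootstrap step. You write that at fixed $t$ the approximation $u_m$ solves the stationary Stokes--Robin problem with right-hand side $f=-\partial_t u_m$ and boundary datum $g(t)$, so that Proposition~\ref{regstat} gives a uniform $H^2$ bound on $u_m$. This is not correct: the Galerkin solution only satisfies that system when tested against $w_1,\dots,w_m$, not against all of $V$. Even with the eigenfunction basis, each $w_j$ (and hence $u_m$) satisfies the \emph{homogeneous} Neumann condition on $\Gamma_e$, not $\partial_n u_m - p_m n = g$; the inhomogeneity $g$ enters only through the projected right-hand side. So you cannot invoke Proposition~\ref{regstat} on $u_m$ as stated. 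The paper avoids this by applying Proposition~\ref{regstat} \emph{after} passing to the limit: once you know $u\in L^\infty(0,T;V)$ with $\partial_t u\in L^2(0,T;L^2(\Omega)^d)$, the limit $u(t)$ genuinely solves system~\eqref{eq3} with $f=-\partial_t u(t)\in L^2(\Omega)^d$ and boundary data $g(t)\in H^{1/2}(\Gamma_e)^d$ for a.e.\ $t$, and then Proposition~\ref{regstat} with $k=0$ yields $(u,p)\in L^2(0,T;H^2(\Omega)^d)\times L^2(0,T;H^1(\Omega))$. With that reordering your argument is complete and matches the paper.
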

The following corollary will be useful when we will prove stability estimates for the evolution problem~\ref{eq1}. 
\begin{corollary} \label{regularite2}
Let  $s \in \mathbb{R}$ be such that $s>\frac{d-1}{2}$ and $s\geq \frac{3}{2}$,  $T>0$, $\alpha >0$ and  $u_0 \in H^3(\Omega)^d \cap H$. We assume that $\Omega$ is  of class $\mathcal{C}^{2,1}$, $g \in H^2(0,T;H^\frac{3}{2}(\Gamma_e)^d)$ and that $ q \in  H^s(\Gamma_0)$ is such that  $q \geq \alpha$ on $\Gamma_0$. Then, problem~\ref{eq1} admits a unique solution $(u,p) \in L^{\infty}(0,T;H^3(\Omega)^d)\cap H^1(0,T;H^2(\Omega)^d) \cap H^2(0,T;L^2(\Omega)^d) \times L^{\infty}(0,T;H^2(\Omega)) \cap H^1(0,T;H^1(\Omega))$.
\end{corollary}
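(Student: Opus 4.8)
The plan is to bootstrap the regularity given by Theorem~\ref{reg2} via the stationary result of Proposition~\ref{regstat} applied at (almost) every time $t$, combined with time-differentiation of the system. First I would verify the compatibility hypotheses needed by Theorem~\ref{reg2}: since $s\geq\frac32$ implies $s>\frac{d-1}{2}$ for $d=2$, since $\Omega$ is $\mathcal{C}^{2,1}\subset\mathcal{C}^{1,1}$, since $g\in H^2(0,T;H^{3/2}(\Gamma_e)^d)\hookrightarrow H^1(0,T;H^{1/2}(\Gamma_e)^d)$, and since $u_0\in H^3(\Omega)^d\cap H\subset V$ (using that $u_0$ divergence-free in $H^3$ implies $u_0\in V$; here one should also note $H\cap H^1 = V$ so $u_0\in V$), Theorem~\ref{reg2} yields a unique solution $(u,p)\in L^2(0,T;H^2(\Omega)^d)\cap H^1(0,T;L^2(\Omega)^d)\cap L^\infty(0,T;V)\times L^2(0,T;H^1(\Omega))$. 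Uniqueness in the larger class claimed here follows \emph{a fortiori}.

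Next I would differentiate the evolution system~\ref{eq1} formally in time and set $(v,\pi)=(\partial_t u,\partial_t p)$. Because $q$ is time-independent, $(v,\pi)$ solves the \emph{same} type of system with data $\partial_t g$ on $\Gamma_e$, zero source, and initial value $v(0)=\partial_t u(0)=\Delta u_0-\nabla p_0$, where $p_0$ is the pressure associated with the stationary Stokes problem with Neumann/Robin data determined by $u_0$ and $g(0)$. One checks $v(0)\in V$ and $\partial_t g\in H^1(0,T;H^{3/2}(\Gamma_e)^d)\hookrightarrow H^1(0,T;H^{1/2}(\Gamma_e)^d)$, so Theorem~\ref{reg2} applied to $(v,\pi)$ gives $\partial_t u\in L^2(0,T;H^2(\Omega)^d)\cap H^1(0,T;L^2(\Omega)^d)\cap L^\infty(0,T;V)$, i.e. $u\in H^2(0,T;L^2(\Omega)^d)\cap H^1(0,T;H^2(\Omega)^d)$ and $u\in W^{1,\infty}(0,T;V)$. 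This already delivers the $H^2(0,T;L^2)$ and $H^1(0,T;H^2)$ components, and, for the pressure, $\partial_t p=\pi\in L^2(0,T;H^1(\Omega))$, i.e. $p\in H^1(0,T;H^1(\Omega))$.

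It remains to upgrade $u$ to $L^\infty(0,T;H^3(\Omega)^d)$ and $p$ to $L^\infty(0,T;H^2(\Omega))$. For this I would view~\ref{eq1} at fixed time $t$ as the stationary Stokes system~\ref{eq3} with right-hand side $f(t)=-\partial_t u(t)$, boundary data $g(t)$ on $\Gamma_e$ and Robin coefficient $q$ on $\Gamma_0$. Apply Proposition~\ref{regstat} with $k=1$: this requires $s\geq k+\frac12=\frac32$ (satisfied), $\Omega$ of class $\mathcal{C}^{2,1}$ (satisfied), $f(t)\in H^1(\Omega)^d$ and $g(t)\in H^{3/2}(\Gamma_e)^d$. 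From the previous step $\partial_t u\in L^\infty(0,T;V)\subset L^\infty(0,T;H^1(\Omega)^d)$, so $f(t)\in H^1(\Omega)^d$ for a.e.\ $t$ with uniform bound; and $g\in H^2(0,T;H^{3/2}(\Gamma_e)^d)\hookrightarrow L^\infty(0,T;H^{3/2}(\Gamma_e)^d)$. Hence Proposition~\ref{regstat} gives, for a.e.\ $t$,
\begin{equation*}
\|u(t)\|_{H^3(\Omega)^d}+\|p(t)\|_{H^2(\Omega)}\leq C(\alpha,M)\bigl(\|g(t)\|_{H^{3/2}(\Gamma_e)^d}+\|\partial_t u(t)\|_{H^1(\Omega)^d}\bigr),
\end{equation*}
and taking the essential supremum over $t\in(0,T)$ yields $u\in L^\infty(0,T;H^3(\Omega)^d)$ and $p\in L^\infty(0,T;H^2(\Omega))$.

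The main obstacle I anticipate is the rigorous justification of the time-differentiation step: one cannot simply assert that $\partial_t u$ solves the differentiated system, since a priori $u$ only has the regularity from Theorem~\ref{reg2}. The clean way is to run the Galerkin scheme of the appendix proof of Theorem~\ref{reg2} directly on the differentiated equation (or equivalently to work with difference quotients $\tau_h u=(u(\cdot+h)-u(\cdot))/h$, derive $h$-uniform estimates using that $q$ does not depend on $t$, and pass to the limit $h\to 0$), which requires the extra compatibility on $u_0$ and $g(0)$ to control $v(0)=\partial_t u(0)$ in $V$; this is exactly why the hypotheses $u_0\in H^3(\Omega)^d\cap H$ and $g\in H^2(0,T;H^{3/2}(\Gamma_e)^d)$ are imposed. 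A secondary technical point is checking that $v(0)=\Delta u_0-\nabla p_0$ indeed lies in $V$, which uses the stationary regularity (Proposition~\ref{regstat}, or Proposition~\ref{existence1}) applied to the data at $t=0$ together with $\operatorname{div}u_0=0$. Once these justifications are in place, the three displayed inclusions in the statement follow by assembling the pieces above.
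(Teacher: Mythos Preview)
Your proposal is correct and follows essentially the same route as the paper: apply Theorem~\ref{reg2} to the time-differentiated system with initial datum $\Delta u_0-\nabla p_0$ to obtain $(\partial_t u,\partial_t p)\in L^\infty(0,T;V)\cap L^2(0,T;H^2)\cap H^1(0,T;L^2)\times L^2(0,T;H^1)$, and then invoke Proposition~\ref{regstat} with $k=1$ at fixed $t$ (source $-\partial_t u(t)\in L^\infty(0,T;V)$, boundary datum $g(t)\in L^\infty(0,T;H^{3/2})$) to lift $(u,p)$ into $L^\infty(0,T;H^3)\times L^\infty(0,T;H^2)$. The only presentational differences are that the paper writes $p_0$ explicitly as the harmonic function with Dirichlet data $\frac{\partial u_0}{\partial n}\cdot n-g(0)\cdot n$ on $\Gamma_e$ and $\frac{\partial u_0}{\partial n}\cdot n+qu_0\cdot n$ on $\Gamma_0$, and justifies the identification of $(\partial_t u,\partial_t p)$ with the auxiliary solution by uniqueness rather than through difference quotients or a repeated Galerkin argument.
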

\begin{proof}[Proof of Corollary~\ref{regularite2}]
Let $(u,p)$ be the solution of~\ref{eq1}. Let us consider the following system: 
\begin{equation} \label{eq31}
	\left \{   
		\begin{array}{ccll}
                \displaystyle       \partial_t v - \Delta v + \nabla \zeta		&	=	&	0,							& \textrm{ in } (0,T) \times \Omega, \\ 
                \displaystyle                          div \textrm{ } v 		&	=	&	0,							& \textrm{ in } (0,T) \times \Omega, \\  
                 \displaystyle               \frac{\partial v }{\partial n} - \zeta n		&	=	&	\partial_t g , 							& \textrm{ on } (0,T) \times \Gamma_e, \\   
                 \displaystyle               \frac{\partial v }{\partial n} - \zeta n	+ qv &	=	&	0,							& \textrm{ on } (0,T) \times \Gamma_0,  \\
                \displaystyle               v(0)					&	=	&	\Delta u_0 - \nabla p_0, 			& \textrm{ in } \Omega, 
                   \end{array}     
         \right. 
\end{equation}
where $p_0 \in H^2(\Omega)$ is defined as the solution of the following elliptic boundary problem:
\begin{equation*} 
	\left \{   
		\begin{array}{ccll}
               \displaystyle         \Delta p_0					&	=	&	\displaystyle0,														& \textrm{ in }  \Omega, \\ 
              \displaystyle                  p_0					&	=	&	\displaystyle\frac{\partial u_0 }{\partial n}  \cdot n  - g_{|t=0}\cdot n,  							& \textrm{ on } \Gamma_e, \\   
                        \displaystyle        p_0					&	=	&\displaystyle	\frac{\partial u_0 }{\partial n}  \cdot n   +	qu_0 \cdot n,							& \textrm{ on }  \Gamma_0.  \\
                   \end{array}     
         \right. 
\end{equation*}
According to Theorem~\ref{reg2},  we obtain that $(v,\zeta)$ belongs to $ L^2(0,T;H^2(\Omega)^d) \cap H^1(0,T;L^2(\Omega)^d) \cap L^{\infty}(0,T;V)\times  L^2(0,T;H^1(\Omega))$. Remark that $\displaystyle \left(\partial_t u,\partial_t p\right)$ is solution of system \ref{eq31} in the distribution sense on $(0,T)$. Thus, by uniqueness, $\displaystyle (v,\zeta)=\left(\partial_t u,\partial_t p\right)$. Then, since $q \in H^s(\Gamma_0)$ and $\displaystyle \left(\partial_t u,g\right) \in L^{\infty}(0,T;V)\times L^{\infty}(0,T;H^{\frac{3}{2}}(\Gamma_e)^d)$ we deduce from Proposition~\ref{regstat} that $(u,p) \in L^{\infty}(0,T;H^3(\Omega)^d) \times L^{\infty}(0,T;H^2(\Omega))$.
\end{proof}
%
%
%--------------------------
%--------------------------
% 
\section{Identifiability}
\label{identifiability}
\subsection{Unique continuation}
We start by recalling a unique continuation result for the Stokes equations proved in~\cite{fabre}. 
%
%--
%--Th
%--
%
\begin{theorem}  \label{theo:continuation}
We denote by $Q=(0,T) \times \Omega$  and  let $O$ be an open subset in $Q$. The horizontal component of $O$ is 
\begin{equation*}
C(O)=\{ (t,x) \in Q/\, \exists \, x_0 \in \Omega, (t,x_0) \in O \}.
\end{equation*}
Let $(u,p) \in L^2(0,T;H^1_{loc}(\Omega))^d \times L^2_{loc}(Q)$ be a weak solution of 
\begin{equation*}
      \left\{      
            \begin{array}{rcll}
             \displaystyle       \partial_t u - \Delta u + \nabla p   &	=	&	0,	&	\textrm{ in  } (0,T) \times \Omega, \\ 
             \displaystyle                             div \textrm{ } u &	=	&	0,	&	\textrm{ in  } (0,T) \times \Omega, \\  
              \end{array}     
         \right.
\end{equation*}
satisfying $u=0$ in  $O$ then $u=0$  and  $p$ is constant in $C(O)$.
\end{theorem}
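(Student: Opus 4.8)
The plan is to reduce the unique continuation statement for the evolution Stokes system to a unique continuation statement for a family of stationary (or rather elliptic) problems obtained by eliminating the pressure, and then to invoke the known unique continuation principle for such elliptic systems. The crucial preliminary observation is that, since $u = 0$ on the open set $O \subseteq Q$, in particular $\partial_t u = 0$ and $\Delta u = 0$ there, so from the momentum equation $\nabla p = 0$ on $O$; hence $p$ is locally constant on each time-slice of $O$. Subtracting this constant, we may assume without loss of generality that $p = 0$ on $O$ as well. The goal then becomes: if $(u,p)$ solves the Stokes system on $(0,T)\times\Omega$ and vanishes (together with $p$) on $O$, then $u$ vanishes and $p$ is constant on the horizontal component $C(O)$.

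\textbf{Key steps.} First I would fix a time $t$ for which the slice $O_t = \{x : (t,x)\in O\}$ is non-empty and work on a connected component $\omega$ of $O_t$ inside a connected open piece of $\Omega$; by the definition of $C(O)$ it suffices to treat an arbitrary point $(t,x)$ with $x$ in the connected component of $\Omega$ meeting $\omega$. Second, I would note that applying the divergence-free condition and taking the divergence of the momentum equation gives $\Delta p = 0$ in $\Omega$ (in the distributional sense, for a.e. $t$), so $p(t,\cdot)$ is harmonic; since it vanishes on the open set $\omega$, unique continuation for harmonic functions forces $p(t,\cdot)$ to be constant on the whole connected component of $\Omega$, in particular on $C(O)$ sliced at time $t$. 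Third, with $\nabla p$ now a known (locally constant, hence zero after normalisation) function, $u(t,\cdot)$ satisfies $\partial_t u - \Delta u = -\nabla p$; but here one must be careful, as $\partial_t u$ couples time slices. The cleaner route, which is the one the paper presumably follows via~\cite{fabre}, is to treat the full parabolic system: $u$ solves a Stokes evolution equation, vanishes on the space-time open set $O$, and one applies the parabolic unique continuation / Carleman-type result of Fabre–Lebeau to conclude $u \equiv 0$ on $C(O)$, after which $\nabla p = \partial_t u - \Delta u = 0$ on $C(O)$ recovers that $p$ is constant there.

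\textbf{Main obstacle.} The genuinely hard part is the unique continuation itself for the Stokes evolution operator — propagating the vanishing of $u$ from the small space-time cylinder $O$ across $\Omega$ at each fixed time. This is exactly the content of the theorem of Fabre and Lebeau cited as~\cite{fabre}, which rests on a global Carleman estimate for the Stokes system; I would not reprove it but cite it directly. The remaining subtlety worth addressing carefully is the regularity bootstrapping needed to make the pointwise manipulations (taking $\Delta$, evaluating $\nabla p$) legitimate for an a priori only weak solution in $L^2(0,T;H^1_{loc})^d \times L^2_{loc}(Q)$, and the precise meaning of "$p$ constant on $C(O)$" — namely constant on each connected component of each time-slice of $C(O)$, which is all the statement claims. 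Thus the proof is essentially: (i) reduce to $p=0$ on $O$ by the momentum equation; (ii) invoke~\cite{fabre} to get $u=0$ on $C(O)$; (iii) read off from the momentum equation that $\nabla p = 0$, hence $p$ is constant, on $C(O)$.
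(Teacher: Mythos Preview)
In the paper this theorem is not proved at all: it is simply \emph{recalled} from~\cite{fabre} (``We start by recalling a unique continuation result for the Stokes equations proved in~\cite{fabre}''), with no argument supplied. Your proposal correctly identifies that the substantive content is precisely the Fabre--Lebeau unique continuation theorem and that one should cite it rather than reprove it; in that sense your conclusion matches the paper exactly.

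The extra scaffolding you build around the citation---step~(i) normalising $p$ to vanish on $O$, and step~(iii) recovering that $p$ is constant on $C(O)$ from $u\equiv 0$---is superfluous, because the full statement (both $u=0$ and $p$ constant on $C(O)$) is already the conclusion of the cited result. In particular your step~(ii), ``invoke~\cite{fabre} to get $u=0$ on $C(O)$'', is not a sub-step of the proof but \emph{is} the entire theorem. That said, your step~(iii) is a correct and self-contained observation: once $u\equiv 0$ on the open set $C(O)$, the momentum equation gives $\nabla p=0$ there in the sense of distributions, and since each time-slice of $C(O)$ equals the connected set $\Omega$, $p(t,\cdot)$ is constant for each relevant~$t$. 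So if one had a reference giving only the $u$-part, your derivation of the pressure conclusion would be the right complement; it is just not needed here.
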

%--
%--Coro
%--
From this theorem, we easily deduce the following result which will be useful in the next subsection.
\begin{corollary} \label{continuation unique}
Let $\delta > 0$, $x_0 \in \partial \Omega$, $t_0 \in (0,T)$ and $r >0$ be such that $\gamma = (t_0-\delta,t_0+\delta) \times( \mathcal{B}(x_0,r) \cap \partial \Omega )$ is an open set in $ (0,T) \times \partial \Omega$. Let $(u,p) \in L^2(0,T;H^2(\Omega)^d) \times L^2(0,T;H^1(\Omega))$ be solution of:
\begin{equation*}
      \left\{      
            \begin{array}{rcll}
               \displaystyle       \partial_t u  - \Delta u + \nabla p   		&	=	&	0,	&	\textrm{ in  } (0,T) \times \Omega, \\ 
               \displaystyle                		           div \textrm{ } u	&	=	&	0,	&	\textrm{ in  } (0,T) \times \Omega, \\  
              \end{array}     
         \right.
\end{equation*}
satisfying
$u=0$  and  $\displaystyle \frac{\partial u}{\partial n}-pn=0$ on $\gamma$. Then $u=0$   and  $p=0$ in $(t_0-\delta,t_0+\delta) \times \Omega$.
\end{corollary}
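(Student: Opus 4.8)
The plan is to reduce Corollary~\ref{continuation unique} to Theorem~\ref{theo:continuation} by building an auxiliary domain slightly larger than $\Omega$ into which the solution can be extended by zero across the flat (or curved) piece of boundary $\gamma$. First I would fix the time slab $I=(t_0-\delta,t_0+\delta)$ and the spatial piece $\sigma=\mathcal B(x_0,r)\cap\partial\Omega$, and introduce an open set $\widetilde\Omega\supset\Omega$ obtained by gluing to $\Omega$ a small open ``collar'' $U$ whose intersection with $\partial\Omega$ is exactly (a neighbourhood of) $\sigma$ and such that $\widetilde\Omega$ is still a connected Lipschitz (indeed, near $\sigma$, $\mathcal C^{1,1}$) domain. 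On $I\times\widetilde\Omega$ I would define $(\widetilde u,\widetilde p)$ to equal $(u,p)$ on $I\times\Omega$ and $(0,0)$ on $I\times(\widetilde\Omega\setminus\overline\Omega)$.

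The key step is to check that $(\widetilde u,\widetilde p)$ is a weak solution of the Stokes system $\partial_t\widetilde u-\Delta\widetilde u+\nabla\widetilde p=0$, $\operatorname{div}\widetilde u=0$ in $I\times\widetilde\Omega$, in the sense required by Theorem~\ref{theo:continuation}, i.e. it lies in $L^2(I;H^1_{loc}(\widetilde\Omega))^d\times L^2_{loc}(I\times\widetilde\Omega)$ and satisfies the equations in $\mathcal D'$. The hypotheses $u\in L^2(I;H^2(\Omega)^d)$ and $u=0$ on $\gamma$ give, by trace theory, that $\widetilde u\in L^2(I;H^1_{loc}(\widetilde\Omega))^d$: the tangential and normal traces of $u$ and of $0$ match on $\sigma$, so there is no jump in $\widetilde u$ across $\sigma$, hence no singular (surface-measure) term in $\nabla\widetilde u$. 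Similarly $\widetilde u\in H^1(I;\cdot)$ is divergence-free because $\operatorname{div}u=0$ and the trace $u\cdot n=0$ on $\sigma$. For the momentum equation, testing against $\varphi\in\mathcal D(I\times\widetilde\Omega)^d$ and integrating by parts on each side of $\sigma$, the boundary contributions on $\sigma$ are exactly $\int_{I\times\sigma}\bigl(\tfrac{\partial u}{\partial n}-pn\bigr)\cdot\varphi$, which vanishes since $\tfrac{\partial u}{\partial n}-pn=0$ on $\gamma$ by hypothesis; the $\widetilde\Omega\setminus\overline\Omega$ side contributes nothing. Hence $(\widetilde u,\widetilde p)$ solves Stokes weakly in $I\times\widetilde\Omega$.

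Now $\widetilde u$ vanishes on the open set $O:=I\times(\widetilde\Omega\setminus\overline\Omega)\subset I\times\widetilde\Omega$, which is nonempty. Its horizontal component $C(O)$, in the sense of Theorem~\ref{theo:continuation} applied with $Q$ replaced by $I\times\widetilde\Omega$ and $\Omega$ by $\widetilde\Omega$, is $\{(t,x)\in I\times\widetilde\Omega:\ \exists\,x_1\in\widetilde\Omega,\ (t,x_1)\in O\}=I\times\widetilde\Omega$ (for every $t\in I$ the slice $\{t\}\times(\widetilde\Omega\setminus\overline\Omega)$ is nonempty). Theorem~\ref{theo:continuation} then yields $\widetilde u=0$ and $\widetilde p$ constant on $I\times\widetilde\Omega$, hence in particular $u=0$ on $I\times\Omega=(t_0-\delta,t_0+\delta)\times\Omega$ and $p$ is constant there. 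Finally, feeding $u=0$ back into the momentum equation gives $\nabla p=0$ on $(t_0-\delta,t_0+\delta)\times\Omega$ (consistent with $p$ constant), and the boundary condition $\tfrac{\partial u}{\partial n}-pn=0$ on $\gamma$ forces that constant to be $0$; therefore $p=0$ on $(t_0-\delta,t_0+\delta)\times\Omega$, which is the claim.

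I expect the main obstacle to be the rigorous verification that the zero-extension $(\widetilde u,\widetilde p)$ is genuinely a weak solution across $\sigma$ — i.e. that the regularity $(u,p)\in L^2H^2\times L^2H^1$ together with the Cauchy data $u=0$, $\tfrac{\partial u}{\partial n}-pn=0$ on $\gamma$ is exactly what is needed to kill all interface terms (no distributional mass on $\sigma$ in $\operatorname{div}\widetilde u$, none in $\partial_t\widetilde u-\Delta\widetilde u+\nabla\widetilde p$). A minor technical point is arranging $\widetilde\Omega$ to have enough regularity for the trace/integration-by-parts arguments near $\sigma$; since $\gamma$ is assumed to be a relatively open piece of $\partial\Omega$ and (in the setting of the paper) $\partial\Omega$ is smooth there, this is routine. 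One could alternatively bypass the explicit extension by using the local interior version of the Stokes unique continuation result, but the extension argument keeps everything within the already-quoted Theorem~\ref{theo:continuation}.
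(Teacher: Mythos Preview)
Your proposal is correct and follows essentially the same route as the paper: extend $(u,p)$ by zero into a slightly larger domain $\widetilde\Omega=\Omega\cup\mathcal B(x_0,r)$, use the Cauchy data $u=0$ and $\tfrac{\partial u}{\partial n}-pn=0$ on $\gamma$ to verify (via integration by parts) that the extension is a weak Stokes solution there, apply Theorem~\ref{theo:continuation} on the set where the extension vanishes, and finally use the boundary condition to force the constant pressure to be zero. Your write-up is in fact more explicit than the paper's about the trace-matching and interface cancellations, but the argument is the same.
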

\begin{proof}[Proof of Corollary~\ref{continuation unique}]
We extend $u$  and  $p$ by $0$ on  $(t_0-\delta,t_0+\delta) \times (\mathcal{B}(x_0,r) \cap \Omega^c)$:
\begin{equation*} 
	 \tilde{u} \textrm{ (resp } \tilde{p} )= \left\{   
		\begin{array}{ll}
  \displaystyle                       u		\textrm{ (resp } p ),		&	 \textrm{ in  }  (t_0-\delta,t_0+\delta) \times\Omega, \\ 
  \displaystyle                       0,				&	 \textrm{ in  }  (t_0-\delta,t_0+\delta) \times (\mathcal{B}(x_0,r) \cap \Omega^c),
		\end{array}     
         \right. 
 \end{equation*}
and  we denote $\displaystyle \tilde{\Omega}=\Omega  \cup \mathcal{B}(x_0,r)$. Let us verify that $(\tilde{u},\tilde{p}) \in L^2(0,T;H^1(\Omega)^d) \times L^2(0,T;L^2(\Omega))$ is still a solution of the Stokes equations in $\tilde{\Omega}$. 
Let $v \in \mathcal{D}(\tilde{\Omega})^d$. We check by integration by parts in space that almost everywhere in $t \in  (t_0-\delta,t_0+\delta) $:	
\begin{equation*}
 \int_{\tilde{\Omega}} \partial_t \tilde{u} \cdot  v + \int_{\tilde{\Omega}} \nabla \tilde{u} : \nabla v - \int_{\tilde{\Omega}} \tilde{p}\textrm{ } div \textrm{ } v =0.
\end{equation*}
Moreover $div \textrm{ } \tilde{u}=0$ in $ (t_0-\delta,t_0+\delta) \times \tilde{\Omega}$. Therefore, we can apply Theorem~\ref{theo:continuation} to $(\tilde{u},\tilde{p})$: $(\tilde{u},\tilde{p})=(0,0)$ in $(t_0-\delta,t_0+\delta)\times \tilde{\Omega}$ which implies that $u=0$ and $p$ is constant in $(t_0-\delta,t_0+\delta)\times \Omega$. At last, the fact that $\displaystyle \frac{\partial u}{\partial n}-pn=0$ on $\gamma$ implies that $p=0$ in $(t_0-\delta,t_0+\delta)\times \Omega$.
\end{proof}
%--------
% APPLICATION
%--------
\subsection{Application}
\begin{proposition} \label{identifiabilite}
Let $s >\frac{d-1}{2}$, $T > 0$, $\alpha > 0$, $x_e \in \Gamma_e$, $r>0$, $g \in H^1(0,T;H^{\frac{1}{2}}(\Gamma_e)^d)$  be non identically zero, $u_0 \in V$  and  $q_j \in H^s(\Gamma_0)$ be such that $q_j \geq \alpha $ on $\Gamma_0$  for $j=1,2$. Let  $(u_j,p_j)$  be the weak solutions of~\ref{eq1} with $q=q_j$ for $j=1,2$.
We assume that  $u_1=u_2$ on $(0,T) \times( \mathcal{B}(x_e,r) \cap \Gamma_e)$. Then $q_1=q_2$ on $ \Gamma_0$.
\end{proposition}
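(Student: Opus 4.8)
The plan is to combine the unique continuation machinery of Corollary~\ref{continuation unique} with the extra boundary relation that the Robin condition provides on $\Gamma_0$. First I would exploit the available data: since $u_1 = u_2$ on $(0,T)\times(\mathcal{B}(x_e,r)\cap\Gamma_e)$, set $u = u_1-u_2$ and $p = p_1-p_2$. The pair $(u,p)$ solves the homogeneous Stokes system on $(0,T)\times\Omega$ (the nonlinear Robin term lives only on $\Gamma_0$, so it does not enter the bulk equations). On the patch $\gamma = (0,T)\times(\mathcal{B}(x_e,r)\cap\Gamma_e)$ we have $u = 0$, and moreover $\dfrac{\partial u}{\partial n} - pn = g - g = 0$ there because both solutions satisfy the \emph{same} Neumann-type condition with the \emph{same} datum $g$ on $\Gamma_e$. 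By Corollary~\ref{continuation unique} (applied with any $t_0,\delta$ so that $(t_0-\delta,t_0+\delta)\subset(0,T)$, and with $x_0 = x_e$), we conclude $u = 0$ and $p = 0$ on $(0,T)\times\Omega$; that is, $u_1 = u_2$ and $p_1 = p_2$ throughout the cylinder. (One should check the regularity hypotheses of the corollary are met; here $g\in H^1(0,T;H^{1/2}(\Gamma_e)^d)$ and $u_0\in V$ with $\Omega$ Lipschitz give at least the $L^2(0,T;H^1)$-type regularity needed for the weak-solution version of Fabre--Lebeau, and if more regularity is required one invokes Proposition~\ref{regstat}/Theorem~\ref{reg2} after noting the problem can be posed on a smoother domain, or one works directly with the weak formulation as in the proof of Corollary~\ref{continuation unique}.)

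Next I would transfer this interior equality to the inaccessible boundary $\Gamma_0$. Taking traces on $(0,T)\times\Gamma_0$ of $u_1 = u_2$ and $p_1 = p_2$, and also of the normal derivatives $\dfrac{\partial u_1}{\partial n} = \dfrac{\partial u_2}{\partial n}$ (legitimate since $u_1 = u_2$ identically in a neighborhood of $\Gamma_0$ inside $\Omega$, so all their derivatives agree), the two Robin boundary conditions on $\Gamma_0$ read
\begin{equation*}
\frac{\partial u_1}{\partial n} - p_1 n + q_1 u_1 = 0, \qquad \frac{\partial u_2}{\partial n} - p_2 n + q_2 u_2 = 0 \quad \text{on } (0,T)\times\Gamma_0.
\end{equation*}
Subtracting and using $u_1 = u_2 =: u$, $p_1 = p_2$, $\partial_n u_1 = \partial_n u_2$ on $\Gamma_0$ yields $(q_1 - q_2)\,u = 0$ on $(0,T)\times\Gamma_0$. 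Thus $q_1 = q_2$ at every point $x\in\Gamma_0$ for which $u(t,x)\neq 0$ for some $t\in(0,T)$.

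The main obstacle is therefore to rule out that $u \equiv 0$ on a nontrivial subset of $(0,T)\times\Gamma_0$, i.e. to show $u_1 = u_2$ cannot vanish identically near any point of $\Gamma_0$ unless $g \equiv 0$. Here I would argue by contradiction: suppose there is an open subset $\gamma_0 \subset (0,T)\times\Gamma_0$ on which $u = 0$; then on $\gamma_0$ also $\dfrac{\partial u}{\partial n} - pn = -q_1 u = 0$, so $u$ satisfies homogeneous Cauchy data for the Stokes system on a piece of $\Gamma_0$. Applying Corollary~\ref{continuation unique} again, but now with $x_0 \in \Gamma_0$, forces $u = 0$ and $p = 0$ on $(t_0-\delta,t_0+\delta)\times\Omega$ for the relevant time window, hence on all of $(0,T)\times\Omega$. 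But we already know $u_1 = u_2$ and $p_1 = p_2$ on the whole cylinder, so this gives no new information directly — the real point is that once $u_1 = u_2$ everywhere, each $(u_j,p_j)$ solves \emph{both} the Robin problem with $q_1$ and the one with $q_2$, and we must still exhibit a point of $\Gamma_0$ where the common velocity is nonzero. The cleanest route: if $u_1 \equiv 0$ on $\Gamma_0$ then $(u_1,p_1)$ solves the homogeneous Stokes system with $\partial_n u_1 - p_1 n = -q_1 u_1 = 0$ on $\Gamma_0$ and $\partial_n u_1 - p_1 n = g$ on $\Gamma_e$; unique continuation from $\Gamma_0$ (Corollary~\ref{continuation unique}) then gives $u_1 = 0$, $p_1 = 0$ on $(0,T)\times\Omega$, whence $g = 0$ on $(0,T)\times\Gamma_e$, contradicting that $g$ is not identically zero. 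Since $\Gamma_0$ is connected (or arguing component by component) and $u_1$ is not identically zero on $\Gamma_0$, the set where $q_1 = q_2$ fails is contained in the zero set of a nonzero analytic-in-time, real-analytic-in-the-tangential-variable-enough function and is in particular negligible; combined with continuity of $q_1 - q_2 \in H^s(\Gamma_0)\hookrightarrow C(\Gamma_0)$ (valid since $s > \tfrac{d-1}{2}$), we conclude $q_1 = q_2$ on all of $\Gamma_0$. I expect the delicate bookkeeping to be exactly this last density/continuity step, together with verifying that the regularity of the weak solutions suffices to take all the boundary traces invoked above.
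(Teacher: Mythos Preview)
Your overall strategy coincides with the paper's, but there is a genuine slip at the second application of unique continuation, and the patch you propose (analyticity of $u_1$ on $\Gamma_0$) is not justified and not needed.

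The slip is here. After establishing $(q_1-q_2)\,u_1=0$ on $(0,T)\times\Gamma_0$, you suppose the common velocity $u_1=u_2$ vanishes on an open piece $\gamma_0\subset(0,T)\times\Gamma_0$, correctly observe that the Robin condition gives $\partial_n u_1-p_1 n=-q_1 u_1=0$ there, and apply Corollary~\ref{continuation unique} to $(u_1,p_1)$. The conclusion is $u_1=0$ and $p_1=0$ on the corresponding time slab of $\Omega$. That \emph{is} new information: you applied the corollary to $(u_1,p_1)$ itself, not to the difference $(u_1-u_2,p_1-p_2)$, so you have obtained $u_1\equiv 0$, not merely $u_1=u_2$. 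Reading off the Neumann condition on $\Gamma_e$ then forces $g=0$ on that time slab. You had the contradiction in hand and discarded it.

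The paper organizes the endgame so that this conclusion is global in time and directly contradicts $g\not\equiv 0$: argue by contradiction and assume $q_1\neq q_2$ at some point of $\Gamma_0$. Since $s>\tfrac{d-1}{2}$, both $q_j$ are continuous, so there is a nonempty open $\kappa\subset\Gamma_0$ on which $q_1\neq q_2$. From $(q_1-q_2)u_1=0$ one gets $u_1\equiv 0$ on $(0,T)\times\kappa$, hence also $\partial_n u_1-p_1 n=-q_1 u_1=0$ there. Corollary~\ref{continuation unique} applied to $(u_1,p_1)$ from $\kappa$ yields $u_1=0$ and $p_1=0$ on all of $(0,T)\times\Omega$, whence $g=0$ on $(0,T)\times\Gamma_e$, the desired contradiction. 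No regularity beyond that furnished by Theorem~\ref{reg2} is used, no analyticity of $u_1$ on $\Gamma_0$ is invoked (none is available at this level), and there is no need to argue about density of the nonzero set of $u_1$ or connectedness of $\Gamma_0$.
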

\begin{proof}[Proof of Proposition~\ref{identifiabilite}]
We are going to prove Proposition~\ref{identifiabilite} by contradiction: we assume that $q_1$ is not identically equal to $q_2$ on $\Gamma_0$.
\par
Thanks to Theorem~\ref{reg2}, we have $(u_j,p_j) \in  L^2(0,T;H^2(\Omega)^d) \times  L^2(0,T;H^1(\Omega))$ for $j=1,2$. We define by $u=u_1-u_2$  and  $p=p_1-p_2$. Let us notice that $(u,p)$ is the solution of the following problem: 
\begin{equation*} 
	\left \{   
		\begin{array}{ccll}
                \displaystyle       \partial_t u - \Delta u + \nabla p							&	=	&	0,							& \textrm{ in  } (0,T) \times \Omega, \\ 
                \displaystyle                          div \textrm{ } u 						&	=	&	0,							& \textrm{ in  } (0,T) \times \Omega, \\  
                           \displaystyle    \frac{\partial u }{\partial n} - pn						&	=	&	0,							& \textrm{ on } (0,T) \times \Gamma_e, \\   
                \displaystyle               \frac{\partial u }{\partial n} - pn	+ q_1u_1-q_2 u_2		&	=	&	0,							& \textrm{ on } (0,T) \times \Gamma_0.                                                                   		\end{array}     
         \right. 
 \end{equation*}
By assumption, $u=0$ and $\displaystyle \frac{\partial u}{\partial n} -pn =0$ on $ \displaystyle (0,T) \times ( \mathcal{B}(x_e,r) \cap \Gamma_e)$. Thus, according to Corollary~\ref{continuation unique}, $u_1=u_2$  and  $p_1=p_2$ in  $(0,T) \times \Omega$. Consequently, we deduce from
%
%\begin{equation*}
%\begin{array}{cccc}
%\frac{\partial u_1 }{\partial n} - p_1 n	+ q_1u_1 &	=	&	0,							& \textrm{ on } (0,T) \times \Gamma_0,       \\
%\frac{\partial u_1 }{\partial n} - p_1 n	+ q_2u_1 &	=	&	0,							& \textrm{ on } (0,T) \times \Gamma_0,       
%\end{array}
%\end{equation*}
%
\begin{equation*}
 \begin{split}
\frac{\partial u_1 }{\partial n} - p_1 n	+ q_1u_1 &	=		0,							 \text{ on } (0,T) \times \Gamma_0,       \\
\frac{\partial u_1 }{\partial n} - p_1 n	+ q_2u_1 &	=		0,							 \text{ on } (0,T) \times \Gamma_0,       
 \end{split}
\end{equation*}

that
\begin{equation} \label{eq6}
u_1(q_1 - q_2) = 0 \textrm{ on }  (0,T) \times \Gamma_0.
\end{equation}
By assumption, $q_1$  is not identically equal to $q_2$. Since $\displaystyle s>\frac{d-1}{2}$, $q_1$  and  $q_2$ are continuous on $\Gamma_0$. Thus, we can find an open set $\kappa \subset \Gamma_0$ with a positive measure such that: 
\begin{equation*}
(q_1-q_2)(x) \not= 0 \textrm{, } \forall x \in \kappa.
\end{equation*}
Equation~\ref{eq6} implies that $u_1 \equiv 0$ on $(0,T) \times \kappa$  and  then $u_1$ is the solution of 
\begin{equation*}
	\left \{   
		\begin{array}{ccll}
             \displaystyle         \partial_t u_1 - \Delta u_1 + \nabla p_1		&	=	&	0,							& \textrm{ in  } (0,T) \times \Omega, \\ 
            \displaystyle           div \textrm{ } u_1						&	=	&	0,							& \textrm{in } (0,T) \times \Omega, \\
		\displaystyle		                                          u_1 		&	=	&	0,							& \textrm{ on } (0,T) \times \kappa, \\  
              \displaystyle                  			\frac{\partial u_1 }{\partial n} - p_1n &	=	&	0,							& \textrm{ on } (0,T) \times \kappa.                                                                  		\end{array}     
         \right. 
         \qquad 
 \end{equation*}
Applying again Corollary~\ref{continuation unique}, we obtain that $u_1= 0$  and  $p_1=0$ in  $(0,T) \times \Omega$. This is in contradiction with the fact that $g$ is non identically zero. 
\end{proof}
%
%
%
%--------------------------
%--------------------------
%
%--------------------------
%--------------------------
%
\section{Stability estimates}
\label{logarithmic}
In this section, we assume that $d=2$ and that the open set $\Omega \subset \mathbb{R}^2$ is of class $\mathcal{C}^{3,1}$. 
\par
We are going to prove stability estimates for the inverse problem we are interested in by using a global Carleman inequality which is stated in Lemma~\ref{bukh}. 
\par
First, in Theorem~\ref{bebel}, we state a  stability estimate for the stationary problem. Then we deduce from this theorem two stability estimates for the evolution problem~\ref{eq1} by using an inequality coming from the analytic semigroup theory. To be more precise, we treat separately the case where  $g$ does not depend on time  (see Theorem~\ref{ok}) and  the case where $g$ depends on time (see Theorem~\ref{okok}).
\subsection{Carleman inequality}
Let us state a global Carleman inequality proved by A. L. Bukhgeim in \cite{bukhgeim}:
\begin{lemma} \label{bukh}
Let $\Psi \in \mathcal{C}^2(\overline{\Omega})$. We have:
\begin{multline} \label{eq7}
\int_{\Omega} ( \Delta \Psi |u|^2 + (\Delta  \Psi -1) |\nabla u|^2 ) e^{\Psi} \\ \leq \int_{\Omega} |\Delta u|^2 e^{\Psi} + \int_{\partial \Omega} \frac{\partial \Psi }{\partial n} \left(|u|^2+ |\nabla u|^2 +2\left |\frac{\partial |\nabla u|^2}{\partial \tau}\right| \right ) e^{\Psi}
\end{multline}
for all $u \in \mathcal{C}^2(\overline{\Omega})$.
\end{lemma}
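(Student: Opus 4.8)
The plan is to prove the pointwise/integral identity by a direct integration-by-parts computation, tracking carefully the boundary terms. I would start from the algebraic expansion of $|\Delta u|^2 e^\Psi$, or rather from the auxiliary substitution $u = e^{-\Psi/2} w$ (the standard conjugation trick for Carleman estimates), but here since the weight $e^\Psi$ appears linearly rather than with a large parameter, it is cleaner to work directly. The key computation is to expand $\int_\Omega |\Delta u|^2 e^\Psi$ and to integrate by parts twice in order to reveal the term $\int_\Omega (\Delta\Psi)|\nabla u|^2 e^\Psi$ together with a good negative term $-\int_\Omega|\nabla u|^2 e^\Psi$ (this is where the $(\Delta\Psi - 1)$ coefficient comes from) and the zeroth-order term $\int_\Omega (\Delta\Psi)|u|^2 e^\Psi$. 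Concretely, one writes $\Delta u \, \Delta u \, e^\Psi$ and moves one Laplacian onto $\nabla(u e^\Psi)$-type expressions, generating $\nabla u \cdot \nabla u$, cross terms $\nabla u \cdot \nabla\Psi$, and $|\nabla\Psi|^2 |u|^2$ type contributions, with all the surface integrals carried along.

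The heart of the matter is the bookkeeping of the boundary integrals on $\partial\Omega$, since the final estimate only keeps $\frac{\partial\Psi}{\partial n}$ multiplied by $|u|^2$, $|\nabla u|^2$, and $2\bigl|\partial_\tau |\nabla u|^2\bigr|$. After the integrations by parts one will obtain boundary terms of the form $\int_{\partial\Omega} \frac{\partial\Psi}{\partial n}(\dots) e^\Psi$ plus terms involving $\frac{\partial u}{\partial n}\frac{\partial}{\partial n}(\text{something})$ and tangential derivatives. I would decompose $\nabla u = \frac{\partial u}{\partial n} n + \frac{\partial u}{\partial \tau}\tau$ on $\partial\Omega$ (using that $(n,\tau)$ is orthonormal in dimension $2$, so $\tau$ is a single unit vector) and use Cauchy–Schwarz / Young's inequality on the indefinite-sign boundary contributions to absorb them into $\frac{\partial\Psi}{\partial n}(|u|^2 + |\nabla u|^2)$ and the tangential term $2\bigl|\partial_\tau|\nabla u|^2\bigr|$. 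The factor $2$ and the appearance of $\bigl|\partial_\tau|\nabla u|^2\bigr|$ rather than just $|\nabla u|^2$ strongly suggest that one boundary term is exactly $\pm\int_{\partial\Omega}\frac{\partial\Psi}{\partial n}\,\partial_\tau(|\nabla u|^2)\,e^\Psi$ (coming from integrating a tangential derivative of $|\nabla u|^2$ against the weight), which is then bounded by its absolute value.

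The main obstacle I anticipate is precisely getting the boundary terms into exactly the stated form without extra terms: one must check that every surface contribution either has the factor $\frac{\partial\Psi}{\partial n}$ explicitly, or can be estimated by such a factor, and in particular that no term with the ``wrong'' sign of $\frac{\partial\Psi}{\partial n}$ survives on the left-hand side. A secondary point is justifying the integrations by parts for $u \in \mathcal{C}^2(\overline\Omega)$ and $\Psi \in \mathcal{C}^2(\overline\Omega)$ on a merely Lipschitz (here eventually $\mathcal{C}^{3,1}$) domain — this is routine given the stated regularity, since $\mathcal{C}^2$ up to the boundary makes all the Green's identities legitimate and the unit normal and tangent are defined a.e. on $\partial\Omega$. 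I would organize the proof as: (i) state the two Green identities to be used; (ii) expand $\int_\Omega |\Delta u|^2 e^\Psi$ and perform the interior integrations by parts, isolating the three interior terms on the left; (iii) collect all boundary terms; (iv) estimate the boundary terms via Young's inequality and the normal/tangential decomposition of $\nabla u$ to reach~\eqref{eq7}. Since this is Bukhgeim's result, I would also simply cite~\cite{bukhgeim} for the detailed computation, reproducing only the skeleton above.
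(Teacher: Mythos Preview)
The paper does not actually prove this lemma: it states the result, cites Bukhgeim~\cite{bukhgeim}, and adds only the remark that ``the proof of this result, which is only valid in dimension $2$, uses computational properties of functions defined on $\mathbb{C}$ (in particular, the fact that $4\,\partial_{\overline z}\partial_z = \Delta$).'' So as far as matching the paper goes, your final sentence (cite Bukhgeim) is exactly what is done.

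Where your sketch diverges is in the mechanism you propose. You outline a real-variable integration-by-parts argument with the conjugation $u = e^{-\Psi/2}w$ and a normal/tangential decomposition on $\partial\Omega$, followed by Young's inequality to force the boundary terms into the stated shape. That is the generic Carleman-estimate template, but it is not how Bukhgeim's inequality is obtained, and it is not obvious that it would produce \emph{exactly} the form~\eqref{eq7} with the clean coefficients $\Delta\Psi$, $\Delta\Psi - 1$, and the single tangential boundary term $2\bigl|\partial_\tau|\nabla u|^2\bigr|$. Bukhgeim's argument instead exploits the two-dimensional factorization $\Delta = 4\,\partial_{\overline z}\partial_z$: one writes $\Delta u$ as $4\,\partial_{\overline z}(\partial_z u)$, integrates by parts in the $\overline z$-variable against $e^\Psi$, and the specific structure of the inequality (including why it fails in higher dimension) falls out of this complex-analytic identity rather than from Young-type absorptions. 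Your sketch never invokes this factorization, which is the one feature the paper singles out.

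In short: your proposal to cite the reference is what the paper does; your proposed proof outline is a plausible-sounding but different route whose success is not clear, and it misses the complex-variable identity that is the actual engine of the result and the reason for the restriction $d=2$.
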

The proof of this result, which is only valid in dimension $2$, uses computational properties of function defined on $\mathbb{C}$ (in particular, the fact that $4 \partial_{\overline{z}}\partial_{z} = \Delta$).
\begin{remark}
 The result is still true for $u \in H^3(\Omega)$. Indeed, for all $u \in H^3(\Omega)$, there exists $(u_n)_{n \in \mathbb{N}} \in \mathcal{C}^2(\overline{\Omega})^{\mathbb{N}}$ such that 
\begin{equation} \label{eq8}
u_n \to u \textrm{ in  } H^3(\Omega). 
\end{equation}
We can apply Lemma~\ref{bukh} to $u_n$, for all $n \in \mathbb{N}$. Let us prove that:
\begin{equation} \label{sol_sol}
\lim_{n \to \infty} \int_{\partial \Omega}\frac{\partial \Psi }{\partial n} \left| \frac{\partial  |\nabla u_n|^2}{\partial \tau} \right| e^{\Psi}=\int_{\partial \Omega}\frac{\partial \Psi }{\partial n}  \left| \frac{\partial  |\nabla u|^2}{\partial \tau} \right| e^{\Psi}. 
\end{equation}
Note first that $\displaystyle \int_{\partial \Omega}\frac{\partial \Psi }{\partial n}  \left| \frac{\partial  |\nabla u|^2}{\partial \tau} \right| e^{\Psi}$ has a meaning for $u \in H^3(\Omega)$: 
\begin{equation*}
\int_{\partial \Omega}\frac{\partial \Psi }{\partial n}  \left| \frac{\partial  |\nabla u|^2}{\partial \tau} \right| e^{\Psi} \leq 2\|\Psi\|_{\mathcal{C}^1(\overline{\Omega})} \|e^{\Psi}\|_{\mathcal{C}^0(\overline{\Omega})}   \left(\sum_{i=1}^2 \int_{\partial \Omega}| \nabla u| \cdot |\nabla \partial_i u| \right)< \infty.
\end{equation*}
We have: 
 \begin{multline*}
 \int_{\partial \Omega}\left |\frac{\partial \Psi }{\partial n}\right| \left| \left| \frac{\partial  |\nabla u_n|^2}{\partial \tau} \right|- \left| \frac{\partial  |\nabla u|^2}{\partial \tau} \right| \right|	e^{\Psi} \\
 \leq C\|\Psi\|_{\mathcal{C}^1(\overline{\Omega})} \|e^{\Psi}\|_{\mathcal{C}^0(\overline{\Omega})}  \sum_{i,j=1}^2 \left(\int_{\partial \Omega}| \partial_j u |^2 \right)^{\frac{1}{2}} \left(\int_{\partial \Omega}|  \partial_{ij} u_n - \partial_{ij} u|^2 \right)^{\frac{1}{2}} \\  + C\|\Psi\|_{\mathcal{C}^1(\overline{\Omega})} \|e^{\Psi}\|_{\mathcal{C}^0(\overline{\Omega})}   \sum_{i,j=1}^2\left(\int_{\partial \Omega} |\partial_{ij} u_n| ^2\right)^{\frac{1}{2}}\left(\int_{\partial \Omega} | \partial_{j} u_n-\partial_{j} u |^2\right)^{\frac{1}{2}}.
\end{multline*}
According to $\ref{eq8}$, the sequence $ \displaystyle(\partial_{ij} u_n )_{n \in \mathbb{N}}$ converges in $L^2(\partial \Omega)$ towards $\displaystyle\partial_{ij}u$ and $\|\partial_{ij}u_n\|_{L^2(\partial \Omega)}$ is bounded by a constant independent of $n$. Then, equality~\ref{sol_sol} follows from~\ref{eq8}.
\end{remark}
%
%_-_-_-_-_-_-_-_-_-_-_-_-_-_-_-_-_-_-_-_-_-_-_-_-_-_-_-_-_-_-_-_-_-_-_
%_-_-_-_-_-_-_-_-_-_-_-_-_-_-_-_-_-_-_-_-_-_-_-_-_-_-_-_-_-_-_-_-_-_-_
%_-_-_-_-_-_-_-_-_-_-_-_-_-_-_-_-_-_-_-_-_-_-_-_-_-_-_-_-_-_-_-_-_-_-_
\subsection{The stationary case }
%_-_-_-_-_-_-_-_-_-_-_-_-_-_-_-_-_-_-_-_-_-_-_-_-_-_-_-_-_-_-_-_-_-_-_
%_-_-_-_-_-_-_-_-_-_-_-_-_-_-_-_-_-_-_-_-_-_-_-_-_-_-_-_-_-_-_-_-_-_-_
%_-_-_-_-_-_-_-_-_-_-_-_-_-_-_-_-_-_-_-_-_-_-_-_-_-_-_-_-_-_-_-_-_-_-_
%
For the stationary problem: 
\begin{equation} \label{eq9}
	\left \{   
		\begin{array}{ccll}
             \displaystyle          - \Delta u + \nabla p			&	=	&	0,							& \textrm{ in  }  \Omega, \\ 
              \displaystyle                            div \textrm{ } u 		&	=	&	0,							& \textrm{ in  }  \Omega, \\  
              \displaystyle                 \frac{\partial u }{\partial n} - pn		&	=	&	g, 							& \textrm{ on }  \Gamma_e, \\   
             \displaystyle                  \frac{\partial u }{\partial n} - pn	+ qu &	=	&	0,							& \textrm{ on } \Gamma_0,  \\
                   \end{array}     
         \right.  
 \end{equation}
we have the following stability estimate.
% 
%____
%
% TEOREME 1
%____
\begin{theorem} \label{bebel}
Let  $\alpha >0$, $M_1> 0$, $M_2>0$, $(g,q_j) \in H^{\frac{5}{2}}(\Gamma_e)^2 \times H^{\frac{5}{2}}(\Gamma_0)$ for  $j=1,2$ be such that $g$ is not identically zero, $\|g\|_{H^{\frac{5}{2}}(\Gamma_e)} \leq M_1$, $ q_j \geq \alpha $ on $\Gamma_0$  and   $\|q_j\|_{H^{\frac{5}{2} }(\Gamma_0)}  \leq M_2$. We denote by $(u_j,p_j)$ the solution of sustem \ref{eq9} associated to $q=q_j$ for $j=1,2$. Let $K$ be a compact subset of $\{ x \in \Gamma_0 / u_1(x) \not= 0\}$ and $m >0$ be a constant such that $|u_1| \geq m$ on $K$. 
\par
Then there exist  positive constants $C(M_1,M_2,\alpha)$  and  $C_1(M_1,M_2,\alpha)$ such that
\begin{equation} \label{kjhg}
\|q_1-q_2\|_{L^2(K)} \leq 
\frac{1}{m}\frac{C(M_1,M_2,\alpha)} { \left(\ln \left( \frac{C_1(M_1,M_2,\alpha)}{\| u_1-u_2\|_{L^2(\Gamma_e)^2} + \|p_1-p_2\|_{L^2(\Gamma_e)} +  \left\| \frac{\partial p_1}{\partial n} - \frac{\partial p_2}{\partial n}\right\|_{L^2(\Gamma_e)}  }   \right)\right)^{\frac{1}{2} }}.
\end{equation}
\end{theorem}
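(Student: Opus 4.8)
The plan is to turn the boundary conditions on $\Gamma_0$ into an estimate of the Cauchy data of $u:=u_1-u_2$ and $p:=p_1-p_2$ on $\Gamma_0$, to produce that estimate from Bukhgeim's Carleman inequality (Lemma~\ref{bukh}), and to feed in the a priori regularity bound of Proposition~\ref{regstat} for the pieces that are not measured; a free parameter in the Carleman weight is then optimized to obtain the logarithmic rate.

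First I would do the elementary reduction. Subtracting the fourth line of \eqref{eq9} for $j=1$ and $j=2$ gives, on $\Gamma_0$,
\begin{equation*}
(q_1-q_2)\,u_1=-\Big(\frac{\partial u}{\partial n}-p\,n\Big)-q_2\,u ,
\end{equation*}
hence, since $|u_1|\ge m$ on $K$ and $\|q_2\|_{L^\infty(\Gamma_0)}\le C\|q_2\|_{H^{5/2}(\Gamma_0)}\le CM_2$,
\begin{equation*}
\|q_1-q_2\|_{L^2(K)}\le\frac{C(M_2)}{m}\Big(\Big\|\frac{\partial u}{\partial n}\Big\|_{L^2(\Gamma_0)}+\|p\|_{L^2(\Gamma_0)}+\|u\|_{L^2(\Gamma_0)}\Big).
\end{equation*}
So it suffices to bound the right-hand side, up to a logarithm, by $\eta:=\|u\|_{L^2(\Gamma_e)^2}+\|p\|_{L^2(\Gamma_e)}+\|\partial_n p\|_{L^2(\Gamma_e)}$. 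Observe also that $(u,p)$ solves the homogeneous stationary Stokes system in $\Omega$ with $\partial_n u-p\,n=0$ on $\Gamma_e$; taking the divergence of the momentum equation gives $\Delta p=0$ in $\Omega$, so $\Delta u=\nabla p$ and $|\Delta u|^2=|\nabla p|^2$. Since $g\in H^{5/2}(\Gamma_e)^2$, $q_j\in H^{5/2}(\Gamma_0)$ and $\Omega$ is $\mathcal{C}^{3,1}$, Proposition~\ref{regstat} with $k=2$ yields $(u_j,p_j)\in H^4(\Omega)^2\times H^3(\Omega)$ with $\|u_j\|_{H^4(\Omega)^2}+\|p_j\|_{H^3(\Omega)}\le C(M_1,M_2,\alpha)=:A$; in particular the $H^3$ version of Lemma~\ref{bukh} applies to each component of $u$ and to $p$, and all the boundary integrands in \eqref{eq7} are then well defined and bounded a priori by $CA^2$.

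Next I would apply \eqref{eq7} with weight $\Psi=\lambda\phi$, where $\lambda\ge\lambda_0$ is a large parameter and $\phi\in\mathcal{C}^2(\overline\Omega)$ is a fixed strictly subharmonic function, $\Delta\phi\ge c_0>0$ on $\overline\Omega$ (for instance $\phi(x)=|x-x_0|^2$ with $x_0$ suitably placed), chosen so that $\dfrac{\partial\phi}{\partial n}\le -c_1<0$ on $\Gamma_0$; then the $\Gamma_0$ boundary contribution in \eqref{eq7} has a favourable sign and can be moved to the left. Applying \eqref{eq7} to $p$ (whose interior term $\int_\Omega|\Delta p|^2e^\Psi$ vanishes) controls $\|p\|_{L^2(\Gamma_0)}$, $\|\nabla p\|_{L^2(\Gamma_0)}$ and the weighted interior quantity $\int_\Omega|\nabla p|^2e^\Psi$ by the $\Gamma_e$ boundary terms of $p$; applying \eqref{eq7} to the components of $u$ (whose interior term is exactly $\int_\Omega|\nabla p|^2e^\Psi$, already controlled) then controls $\|u\|_{L^2(\Gamma_0)}$ and $\|\partial_n u\|_{L^2(\Gamma_0)}$ by the $\Gamma_e$ boundary terms of $u$ and $p$. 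On $\Gamma_e$ one has $\partial_n u=p\,n$, so the contributions $|u|^2$, $|\partial_n u|^2=|p|^2$ and $|\partial_n p|^2$ are bounded by $\eta^2$, while the remaining contributions in \eqref{eq7} on $\Gamma_e$ --- tangential derivatives and the terms $\bigl|\partial_\tau|\nabla u|^2\bigr|$, $\bigl|\partial_\tau|\nabla p|^2\bigr|$, which are not measured --- are bounded by $CA^2$ via the trace theorem (possibly after interpolating the non-measured tangential traces between $L^2(\Gamma_e)$ and their $H^{7/2}(\Gamma_e)$ a priori bound). Keeping track of the powers of $\lambda$ and of $e^{\lambda\phi}$, this leads to an inequality of the form
\begin{equation*}
\Big\|\frac{\partial u}{\partial n}\Big\|_{L^2(\Gamma_0)}^2+\|p\|_{L^2(\Gamma_0)}^2+\|u\|_{L^2(\Gamma_0)}^2\le C\,e^{C\lambda}\,\eta^2+\frac{C}{\lambda}\,A^2,\qquad\forall\,\lambda\ge\lambda_0 .
\end{equation*}
If $\eta$ exceeds a threshold depending only on $M_1,M_2,\alpha$, then \eqref{kjhg} is trivial from the a priori bound; otherwise one takes $\lambda\simeq\ln(C_1/\eta)$ to balance the two terms, obtaining $\|\partial_n u\|_{L^2(\Gamma_0)}^2+\|p\|_{L^2(\Gamma_0)}^2+\|u\|_{L^2(\Gamma_0)}^2\le C A^2/\ln(C_1/\eta)$; substituting into the reduction of the first paragraph and taking square roots gives \eqref{kjhg}.

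The step I expect to be the main obstacle is the Carleman bookkeeping: \eqref{eq7} produces on $\partial\Omega$ the full gradient and the tangential derivative of $|\nabla\cdot|^2$, i.e. considerably more than the three measured $L^2(\Gamma_e)$ quantities, so one must split the $\Gamma_e$ boundary terms into a genuinely measured part (driving $\eta$) and an a-priori-controlled part, and arrange the weight $\phi$ and the parameter $\lambda$ so that the latter enters with a decaying factor $1/\lambda$ rather than a fixed constant or an exponential in $\lambda$ --- it is exactly this balance that yields the logarithmic, and not Hölder or no, stability rate. A secondary technical point is the construction of a subharmonic $\phi$ with the correct sign of $\partial_n\phi$ on $\Gamma_0$, compatible with the (annular-type) geometry of $\Omega$.
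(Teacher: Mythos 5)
Your overall architecture coincides with the paper's: the algebraic reduction of $\|q_1-q_2\|_{L^2(K)}$ to the traces of $u$, $\partial_n u$ and $p$ on $\Gamma_0$, the a priori $H^4\times H^3$ bound from Proposition~\ref{regstat} with $k=2$, the double application of Lemma~\ref{bukh} to $u$ and to $p$ (using $\Delta p=0$ and $\Delta u=\nabla p$), the interpolation of the unmeasured tangential traces on $\Gamma_e$, and the final optimization in the Carleman parameter (the paper isolates the trace estimate as Lemma~\ref{stab}). The gap is precisely in the step you flag as the main obstacle: with your weight $\Psi=\lambda\phi$, $\phi$ strictly subharmonic with $\partial_n\phi\le -c_1<0$ on $\Gamma_0$, the claimed inequality $\|u\|^2_{L^2(\Gamma_0)}+\|\partial_n u\|^2_{L^2(\Gamma_0)}+\|p\|^2_{L^2(\Gamma_0)}\le Ce^{C\lambda}\eta^2+CA^2/\lambda$ does not follow, because nothing in your construction produces the $A^2/\lambda$ remainder. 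With such a $\phi$, \emph{every} $\Gamma_0$ term of~\ref{eq7} is favourable and is absorbed into the left-hand side, while every $\Gamma_e$ term carries the factor $|\partial_n\Psi|e^{\Psi}\le C\lambda e^{\lambda\max_{\Gamma_e}\phi}$; dividing by the favourable factor $\lambda c_1 e^{\lambda\min_{\Gamma_0}\phi}$ cancels the powers of $\lambda$ and leaves $\int_{\Gamma_0}(\dots)\le Ce^{C\lambda}\bigl(\eta^2+A^{3/2}\eta^{1/2}\bigr)$, which is increasing in $\lambda$: taking $\lambda=\lambda_0$ would give a H\"older estimate for a Cauchy problem that is only logarithmically stable (harmonic functions $r^{-n}\cos n\theta$ on an annulus make such a one-parameter inequality fail outright), so the bookkeeping cannot close this way.

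What the paper does instead, and what your single-parameter weight misses, is a two-part weight $\Psi=\Psi_1+s\Psi_0$. Here $\Psi_0$ is \emph{harmonic} (so it contributes nothing to $\Delta\Psi$), vanishes on $\Gamma_0$, is nonnegative on $\Gamma_e$, and has $\partial_n\Psi_0<0$ on $\Gamma_0$ by the Hopf lemma; $\Psi_1$ solves $\Delta\Psi_1=\lambda\ge 2$ with $\Psi_1=0$ on $\partial\Omega$ and supplies the subharmonicity, but then necessarily has $\partial_n\Psi_1>0$ on $\Gamma_0$. On $\Gamma_0$ one has $e^{\Psi}=1$ and $\partial_n\Psi=\partial_n\Psi_1+s\,\partial_n\Psi_0$, so the favourable part scales like $s$ while the unfavourable part $\int_{\Gamma_0}\partial_n\Psi_1(\dots)\le CA^2$ is $s$-independent; after dividing by $s\theta$ this is exactly the $A^2/s$ remainder, and the $\Gamma_e$ side contributes $Ce^{ks}A^{3/2}\eta^{1/2}$. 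This unavoidable, $s$-independent positive boundary contribution on $\Gamma_0$ is the structural source of the $1/s$ term and hence of the logarithmic (rather than H\"older) rate. Your reduction on $\Gamma_0$, the use of $\partial_n u=pn$ on $\Gamma_e$, and the final balancing $s\simeq\ln(C_1/\eta)$ together with the large-$\eta$ threshold are all consistent with the paper; only the weight, and with it the derivation of the key two-term inequality, needs to be replaced by the $\Psi_1+s\Psi_0$ construction.
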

\begin{remark}
Since $g$ is not identically zero, Corollary~\ref{continuation unique} ensures that $\{ x \in \Gamma_0 / u_1(x) \not= 0\}$ is not empty. Moreover, according to Proposition \ref{regstat}, $u_1$ is continuous on $\overline{\Omega}$, thus we obtain the existence of a compact $K$ and a constant $m$ as in Theorem~\ref{bebel}.  We notice however that the constants  involved in the estimate~\ref{kjhg} and the set $K$ depend on $u_1$. Finding a uniform lower bound for any solution $u$ of system~\ref{eq9}  remains an open question. We refer to~\cite{chaabane_jaoua},~\cite{alessandrini_rondi} and~\cite{alessandrini_sincich} for the case of the scalar Laplace equation. 
\end{remark} 
\begin{remark}
In~\cite{cheng-choulli-lin}, the same kind of inequality is proved for the Laplacian problem with Robin boundary conditions under the hypothesis that the measurements are small enough. Here, we free ourselves from this smallness assumption on the measurements.
\end{remark} 
\begin{remark}
If we compare this result with the identifiability property (Proposition~\ref{identifiabilite}), we notice that we need additional measurements on the solution. In Proposition~\ref{identifiabilite}, we only have to assume that $u_1=u_2$  and $\displaystyle \frac{\partial u_1}{\partial n} -p_1n= \frac{\partial u_2}{\partial n}-p_2n$ on $\displaystyle \Gamma \subseteq \Gamma_e$, where $\Gamma$ is a non-empty open part of the boundary, in order to get the identifiability of the Robin coefficient $\displaystyle q$ on $\displaystyle \Gamma_0$. Here, besides a measurement on $u_1-u_2$, we need measurements on $\displaystyle \frac{\partial u_1}{\partial n} - \frac{\partial u_2}{\partial n}$ (or $p_1-p_2$) and $ \displaystyle \frac{\partial p_1}{\partial n} - \frac{\partial p_2}{\partial n}$.
\end{remark} 
Let us begin by proving this intermediate result which gives us a logarithmic estimate of the traces of $u$, $\nabla u$, $p$, $\nabla p$ on $\Gamma_0$ with respect to the ones on $\Gamma_e$.
%____
%
% lemma 2
%____
\begin{lemma} \label{stab}
Let $(u,p) \in H^4(\Omega)^2 \times H^3(\Omega)$ be the solution in  $\Omega$ of 
\begin{equation*}
      \left\{      
            \begin{array}{ccl}
                    - \Delta u + \nabla p   &	=	&	0,	 \\ 
                                 div \textrm{ } u &	=	&	0.	 \\  
              \end{array}     
         \right.
\end{equation*}
Then, there exist $C>0$, $C_1>0$ and $d_0>0$ such that for all $\tilde{d}>d_0$: 
\begin{multline} \label{ineg_lemma}
\| u\|_{L^2(\Gamma_{0})^2}+ \|\nabla u\|_{L^2(\Gamma_{0})^4} + \|p\|_{L^2(\Gamma_{0})} + \|\nabla p \|_{L^2(\Gamma_{0})^2}  \\
\leq \tilde{d}C \frac{\|u\|_{H^3(\Omega)^2} + \|p\|_{H^3(\Omega)}} { \left( \ln \left( C_1\tilde{d}^2 \frac{\|u\|_{H^3(\Omega)^2} + \|p\|_{H^3(\Omega)}}{\| u\|_{L^2(\Gamma_e)^2}+ \left \|\frac{\partial{u}}{\partial n}\right \|_{L^2(\Gamma_e)^2} + \|p\|_{L^2(\Gamma_e)} + \left\|\frac{\partial{p}}{\partial n}\right\|_{L^2(\Gamma_e)}  }  \right) \right)^{\frac{1}{2}} }.
\end{multline}
\end{lemma}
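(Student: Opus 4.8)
The plan is to apply the Carleman inequality of Lemma~\ref{bukh} componentwise. Since $(u,p)$ solves $-\Delta u + \nabla p = 0$, each scalar component $u_k$ of $u$ satisfies $\Delta u_k = \partial_k p$; similarly each component of $\nabla p$ is harmonic since $\Delta p = \mathrm{div}(\Delta u) = 0$, so $p$ itself and its derivatives are amenable to the estimate. I would work with a weight $\Psi$ constructed so that $\frac{\partial\Psi}{\partial n}$ has a favorable sign: it should be negative (or small) on $\Gamma_0$ and the boundary terms on $\Gamma_e$ should be the only ones surviving with a bad sign. Concretely, one takes $\Psi$ of the form $\Psi = \lambda(\psi_0 - \tilde d)$ or a shifted/scaled version where $\psi_0$ is a fixed $\mathcal C^2$ function that is, say, roughly equal to the distance to a point far outside $\Omega$ on the $\Gamma_e$ side, arranged so that $\Delta\Psi > 1$ on $\overline\Omega$ (this makes the left-hand side of~\ref{eq7} a genuine positive bound on $\int_\Omega(|u|^2 + |\nabla u|^2)e^\Psi$), while $\partial_n\Psi \le 0$ on $\Gamma_0$. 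The parameter $\tilde d$ (which should be thought of as how far the ``source point'' of the weight is pushed out) is what will be optimized at the end.

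The key steps, in order, are: (i) Apply Lemma~\ref{bukh} to each component of $u$ and to $p$ and $\nabla p$ (legitimate by the remark following Lemma~\ref{bukh}, since $u \in H^4 \subset H^3$ componentwise and $p, \nabla p \in H^3$). Using $\Delta u_k = \partial_k p$ and $\Delta p = 0$, the right-hand-side terms $\int_\Omega |\Delta u_k|^2 e^\Psi$ are controlled by $\int_\Omega |\nabla p|^2 e^\Psi$; by choosing the constant in $\Delta\Psi - 1$ large enough these can be absorbed into the left-hand side. (ii) Since $\partial_n\Psi \le 0$ on $\Gamma_0$, the $\Gamma_0$ boundary terms on the right are $\le 0$ and can be dropped; the remaining boundary terms live on $\Gamma_e$ and are bounded by $\|u\|_{L^2(\Gamma_e)}^2 + \|\nabla u\|_{L^2(\Gamma_e)}^2 + \|p\|_{L^2(\Gamma_e)}^2 + \|\nabla p\|_{L^2(\Gamma_e)}^2$ together with the tangential-derivative terms $\|\partial_\tau|\nabla u|^2\|$ etc., which by the trace theorem and $H^3(\Omega) \hookrightarrow$ traces of $H^{5/2}(\partial\Omega)$ are bounded by $(\|u\|_{H^3(\Omega)} + \|p\|_{H^3(\Omega)})$ times the low-order $L^2(\Gamma_e)$ quantities appearing in the statement — here one uses an interpolation/trace argument to convert the $H^{s}(\Gamma_e)$ norm of $\nabla u$, $\nabla p$ into a product of an $L^2(\Gamma_e)$ factor and an $H^3(\Omega)$ factor (one also needs to control $\partial_n u$ on $\Gamma_e$, which is listed in the statement, and $\frac{\partial u}{\partial\tau}$, controlled by $u$ on $\Gamma_e$ plus regularity). (iii) On the left, since $e^\Psi$ is bounded below on $\overline\Omega$ by a constant depending on $\tilde d$, estimate $\int_\Omega(|u|^2 + |\nabla u|^2) e^\Psi \ge c\, e^{-C\tilde d}(\|u\|_{L^2(\Omega)}^2 + \|\nabla u\|_{L^2(\Omega)}^2)$, and then bound the traces $\|u\|_{L^2(\Gamma_0)} + \|\nabla u\|_{L^2(\Gamma_0)} + \|p\|_{L^2(\Gamma_0)} + \|\nabla p\|_{L^2(\Gamma_0)}$ from above by interpolating between the $H^3(\Omega)$ norm and the $L^2(\Omega)$ norm just obtained (trace + interpolation again), getting something like $(\text{trace on }\Gamma_0) \lesssim (\|u\|_{H^3} + \|p\|_{H^3})^{1-\theta} A^\theta$ where $A$ is the bound from Carleman. (iv) Finally, combine: writing $E := \|u\|_{L^2(\Gamma_e)^2} + \|\partial_n u\|_{L^2(\Gamma_e)^2} + \|p\|_{L^2(\Gamma_e)} + \|\partial_n p\|_{L^2(\Gamma_e)}$ and $N := \|u\|_{H^3(\Omega)^2} + \|p\|_{H^3(\Omega)}$, one arrives at an estimate roughly of the shape $(\text{trace on }\Gamma_0) \lesssim e^{C\tilde d} E + e^{-c\tilde d}N$ (after raising to appropriate powers), and then optimize in $\tilde d$: choosing $\tilde d \sim \ln(N/E)$ (valid once $N/E$ is large, i.e. $\tilde d > d_0$) balances the two terms and produces the logarithmic dependence $\tilde d \, C\, N / (\ln(C_1 \tilde d^2 N/E))^{1/2}$ displayed in~\ref{ineg_lemma}. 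The extra $\tilde d^2$ inside the logarithm and the prefactor $\tilde d$ come from tracking the $\tilde d$-dependence of the constants $c\,e^{-C\tilde d}$, $\|e^\Psi\|_{\mathcal C^0}$ carefully through steps (ii)–(iii).

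The main obstacle I expect is step (ii)–(iii): keeping precise, explicit track of how every constant depends on the parameter $\tilde d$ (the weight blows up like $e^{C\tilde d}$ on one side and is as small as $e^{-c\tilde d}$ on the other), so that the final optimization in $\tilde d$ yields exactly the stated form with the $\tilde d^2$ inside the logarithm rather than some other power. A secondary technical point is handling the tangential-derivative boundary term $\left|\frac{\partial|\nabla u|^2}{\partial\tau}\right|$ on $\Gamma_e$: it is quadratic in second derivatives of $u$, so one must split it via Cauchy–Schwarz into (a trace of $\nabla u$, controlled by $\|u\|_{L^2(\Gamma_e)}$-type data after interpolation) times (a trace of $\nabla^2 u$, controlled by $N$), which is exactly the mechanism that produces the product structure $N \cdot E$ inside the logarithm; one must also make sure the construction of $\Psi$ with $\Delta\Psi > 1$ everywhere on $\overline\Omega$ and $\partial_n\Psi \le 0$ on $\Gamma_0$ is actually compatible with the given geometry (it is, because $\Gamma_0$ and $\Gamma_e$ are separated, so a weight peaked away from $\Gamma_0$ on the $\Gamma_e$-side works).
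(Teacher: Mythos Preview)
Your overall strategy --- apply Bukhgeim's Carleman inequality with a well-chosen weight, then optimize a large parameter --- is the paper's strategy too, but the mechanism by which you extract the $\Gamma_0$ trace is backwards, and this is a genuine gap. In your step (ii) you propose to \emph{drop} the $\Gamma_0$ boundary terms on the right of~(\ref{eq7}) because $\partial_n\Psi\le 0$ makes them nonpositive, and then recover $\|u\|_{L^2(\Gamma_0)}+\|\nabla u\|_{L^2(\Gamma_0)}+\dots$ indirectly from the volume integral via trace and interpolation. The paper does the opposite: it \emph{keeps} those $\Gamma_0$ boundary terms --- they are the whole point --- and moves them to the left. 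Concretely, the paper takes $\Psi=\Psi_1+s\Psi_0$ where $\Psi_0$ is harmonic with $\Psi_0=0$ on $\Gamma_0$, $\Psi_0\ge 0$ on $\Gamma_e$ and $\partial_n\Psi_0<0$ on $\Gamma_0$ (maximum principle plus Hopf), while $\Delta\Psi_1=\lambda\ge 2$ with zero Dirichlet data. Then on $\Gamma_0$ one has $e^\Psi=1$ and, splitting $\partial_n\Psi=s\,\partial_n\Psi_0+\partial_n\Psi_1$, the $s\,\partial_n\Psi_0$ part contributes $\le -s\theta\int_{\Gamma_0}(|u|^2+|\nabla u|^2+|p|^2+|\nabla p|^2)$ with $\theta=\min_{\Gamma_0}|\partial_n\Psi_0|>0$. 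Summing the Carleman inequalities for $u$ and for $p$, discarding the (positive) volume left-hand side entirely, and moving that piece across yields directly
\[
s\theta\int_{\Gamma_0}\bigl(|u|^2+|\nabla u|^2+|p|^2+|\nabla p|^2\bigr)\ \le\ C\bigl(\text{$\Gamma_e$ terms carrying }e^\Psi\bigr)\ +\ C\bigl(\|u\|_{H^3}^2+\|p\|_{H^3}^2\bigr),
\]
after which dividing by $s$ produces the structure $\int_{\Gamma_0}(\dots)\le C\bigl(e^{ks}A^{3/2}B^{1/2}+A^2/s\bigr)$ with $A=\|u\|_{H^3}+\|p\|_{H^3}$, $B$ the $\Gamma_e$ data and $k=\max_{\Gamma_e}\Psi_0$. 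Your detour (volume $\to$ trace $\to$ interpolation between $H^1$ and $H^3$) would at best give a multiplicative bound of the form $A^{1-\theta}(\text{Carleman output})^{\theta}$, which does not obviously collapse to the additive $e^{ks}B^{1/2}+A^{1/2}/s$ form needed for the stated logarithmic estimate; and since your weight is roughly constant in size across $\overline\Omega$, the $e^\Psi$ factors on the volume and on $\Gamma_e$ tend to cancel, so the parameter does not produce the required trade-off.

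Two further points. First, your weight $\Psi=\lambda(\psi_0-\tilde d)$ with $\psi_0$ ``distance to a far point on the $\Gamma_e$ side'' does not clearly satisfy both $\Delta\Psi>1$ on $\overline\Omega$ and $\partial_n\Psi\le 0$ on $\Gamma_0$; the paper's two-piece construction decouples these requirements, handling each with its own elliptic boundary-value problem. Second, you identify $\tilde d$ with the Carleman parameter, but in the paper the parameter to be optimized is $s$ (the coefficient of $\Psi_0$); the quantity $\tilde d\ge 1$ is a \emph{separate} free multiplier inserted artificially into the already-proved bound $\int_{\Gamma_0}(\dots)\le CA^{3/2}\bigl(e^{ks}B^{1/2}+\tilde d\,A^{1/2}/s\bigr)$ before minimizing in $s$, precisely so that the final estimate carries a free $\tilde d$ and the factor $\tilde d^2$ appears inside the logarithm. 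Your handling of the tangential term $|\partial_\tau|\nabla u|^2|$ on $\Gamma_e$ (Cauchy--Schwarz plus interpolation to split it into an $L^2(\Gamma_e)$ factor times an $H^3(\Omega)$ factor) is, on the other hand, exactly what the paper does.
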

%
%
%
%
%____
%
% DEBUT PREUVE lemma 2
%____
\begin{proof}[Proof of Lemma~\ref{stab}]
The proof  is based on the Carleman inequality of Lemma~\ref{bukh} for an appropriate choice of $\Psi$. Note that we will apply~\ref{eq7} twice: one time for the velocity $u$ and one time for the pressure $p$. The weight function $\Psi$ is chosen in order to estimate the traces on $\Gamma_0$ with respect to the ones on $\Gamma_e$.
\\
\\
\underline{Step 1}: choice of $\Psi$.\\
We choose $\Psi$ as in~\cite{cheng-choulli-lin}.  
 There exists $\Psi_0 \in \mathcal{C}^2(\overline{\Omega})$ non identically zero such that:
\begin{equation*}
\begin{tabular}{cccc}
$\Delta \Psi_0 = 0$ in  $\Omega$,  & $\Psi_0 = 0$ on $\Gamma_0$, & $\Psi_0 \geq 0$ on $\Gamma_e$, &$\displaystyle  \frac{\partial \Psi_0}{\partial n}< 0$ on $\Gamma_0$.
\end{tabular} 
\end{equation*}
Indeed, let $\chi \in \mathcal{C}^2({\partial \Omega})$ such that 
\begin{equation*} \label{chi_Psi_0}
\chi = 0 \textrm{ on } \Gamma_0,  \textrm{ } \chi \geq 0 \textrm{ on } \Gamma_e, 
\end{equation*}
 and  $\chi$ non identically zero on $\Gamma_e$. The boundary value problem :
\begin{equation} \label{choice_Psi_0} 
	\left \{   
		\begin{array}{rcll}
                    \displaystyle     \Delta \Psi_0				&	=	&	0,							& \textrm{ in  }  \Omega, \\ 
                    \displaystyle                      \Psi_0				&	=	&	\chi,							& \textrm{ on } \partial \Omega, 
		\end{array}     
         \right. 
 \end{equation}
has a unique solution $\Psi_0 \in \mathcal{C}^2(\overline{\Omega} )$. Note that $\Psi_0$ is not constant because $\chi$ is non identically zero. So, from the strong maximum principle, $\Psi_0 > 0$ in  $\Omega$. According to Hopf Lemma, we have $ \displaystyle \frac{\partial \Psi_0}{\partial n}<0 $ on $\Gamma_0$. 
\par
Let $\lambda>0$. We denote by $\Psi_1 \in \mathcal{C}^2(\overline{\Omega})$ the unique solution of the boundary value problem: 
\begin{equation*} 
	\left \{   
		\begin{array}{rcll}
              \displaystyle           \Delta \Psi_1				&	=	&	\lambda,						& \textrm{ in  }  \Omega, \\ 
             \displaystyle                             \Psi_1				&	=	&	0,							& \textrm{ on } \partial \Omega. 
		\end{array}     
         \right. 
 \end{equation*}
From the comparison principle  and  the strong maximum principle, we have $\Psi_1 < 0$ in  $\Omega$. Moreover, according to the Hopf Lemma, we have $ \displaystyle \frac{\partial \Psi_1}{\partial n}>0$ on $\partial \Omega$. \\
\\
Let us consider $\Psi= \Psi_1 + s \Psi_0$, for  $s>0$. To summarize, the function $\Psi$ has the following properties: 
\begin{equation*}
\begin{array}{cccc}
 \Delta \Psi =\lambda \textrm{ in } \Omega, & \Psi =0 \textrm{ on } \Gamma_0, & \Psi  \geq 0 \textrm{ on } \Gamma_e, \textrm{ and } & \displaystyle s  \frac{\partial \Psi_0}{\partial n} \leq \frac{\partial \Psi}{\partial n} \leq  \frac{\partial \Psi_1}{\partial n} \textrm{ on } \Gamma_0.
\end{array}
\end{equation*}
\\
\underline{Step 2}: 
We first apply Lemma~\ref{bukh} to $u$. Using the fact that $\displaystyle \Delta u = \nabla p$, we have: 
\begin{equation} \label{eq11}
\begin{split}
\int_{\Omega} ( \Delta \Psi |u|^2 + &(\Delta  \Psi -1) |\nabla u|^2 ) e^{\Psi} \\ & \leq        \int_{\Omega} |\nabla p|^2 e^{\Psi} 
 + \int_{\partial \Omega} \frac{\partial \Psi }{\partial n} \left(|u|^2+ |\nabla u|^2 +2 \left| \frac{\partial  |\nabla u|^2}{\partial \tau} \right|\right ) e^{\Psi}.
\end{split}
\end{equation}
Then, we  apply once again Lemma~\ref{bukh}   to $p$:
\begin{equation}  \label{eq12}
\begin{split}
\int_{\Omega} ( \Delta \Psi |p|^2 + &(\Delta  \Psi -1) |\nabla p|^2 ) e^{\Psi}\\ &\leq \int_{\Omega} |\Delta p|^2 e^{\Psi} 
+ \int_{\partial \Omega} \frac{\partial \Psi }{\partial n}\left(|p|^2+ |\nabla p|^2 +2  \left|\frac{\partial |\nabla p|^2}{\partial \tau}\right| \right) e^{\Psi}.
\end{split}
\end{equation}
We have $\Delta p =$ div$(\Delta u)=0$ hence  $\displaystyle \int_{\Omega} |\Delta p|^2 e^{\Psi} =0$. We now choose $\lambda \geq 2$.  By summing up inequalities~\ref{eq11}  and~\ref{eq12}  and  by eliminating the integrals on $\Omega$ in the left hand side which are positive terms, we obtain:

\begin{multline*}
\int_{\partial \Omega} \frac{\partial \Psi }{\partial n}\left(|u|^2+ |\nabla u|^2 +2 \left| \frac{\partial  |\nabla u|^2}{\partial \tau} \right| \right) e^{\Psi} \\
+  \int_{\partial \Omega} \frac{\partial \Psi }{\partial n} \left(|p|^2+ |\nabla p|^2 +2  \left|\frac{\partial |\nabla p|^2}{\partial \tau}\right| \right) e^{\Psi} \geq 0.
\end{multline*}
We now specify the dependence with respect to $s$. We denote by $\displaystyle \theta=\min_{\Gamma_0}\left| \frac{\partial \Psi_0}{\partial n}\right|$. We note that on $\Gamma_0$, $e^{\Psi}=1$. Consequently: 
\begin{equation} \label{ben}
\begin{split}
- & s \theta \int_{\Gamma_0}  (|u|^2+ |\nabla u|^2 +|p|^2+ |\nabla p|^2 ) 
+  \int_{\Gamma_0}\frac{\partial \Psi_1}{\partial n} \left(|u|^2+ |\nabla u|^2 +|p|^2+ |\nabla p|^2 \right)  \\
+  &2\int_{\Gamma_0}\frac{\partial \Psi }{\partial n}\left(  \left|\frac{\partial |\nabla p|^2}{\partial \tau}\right|  +  \left| \frac{\partial  |\nabla u|^2}{\partial \tau} \right|  \right)
+  2\int_{\Gamma_e} \frac{\partial \Psi }{\partial n}\left( \left|\frac{\partial |\nabla p|^2}{\partial \tau}\right|  +  \left| \frac{\partial  |\nabla u|^2}{\partial \tau} \right| \right)e^{\Psi} \\
+ &  \int_{\Gamma_e} \frac{\partial \Psi }{\partial n} (|u|^2+ |\nabla u|^2 +|p|^2+ |\nabla p|^2 ) e^{\Psi} \geq 0 .
\end{split}
\end{equation}
Let us study each of the terms. We have:
\begin{equation*}
\int_{\Gamma_0}\frac{\partial \Psi_1}{\partial n}  (|u|^2+ |\nabla u|^2 +|p|^2+ |\nabla p|^2 )  \leq  C (\|u\|^2_{H^3(\Omega)^2} +\|p\|^2_{H^3(\Omega)}).
\end{equation*}
Moreover, since  $\displaystyle  \frac{\partial \Psi }{\partial n} \leq \frac{\partial \Psi_1}{\partial n} $ on $\Gamma_0$, we obtain:
\begin{equation*}
2 \int_{\Gamma_0} \frac{\partial \Psi }{\partial n} \left( \left|\frac{\partial |\nabla p|^2}{\partial \tau}\right|  +  \left| \frac{\partial  |\nabla u|^2}{\partial \tau} \right| \right)\leq C (\|u\|^2_{H^3(\Omega)^2} +\|p\|^2_{H^3(\Omega)}).
\end{equation*}
Since, on $\Gamma_e$, $\displaystyle \left|\frac{\partial \Psi }{\partial n}\right| \leq s C$ for $s \geq 1$,  we have:
\begin{equation*}
\int_{\Gamma_e} \frac{\partial \Psi }{\partial n} (|u|^2+ |\nabla u|^2 +|p|^2+ |\nabla p|^2 ) e^{\Psi}  \leq C s \int_{\Gamma_e}  (|u|^2+ |\nabla u|^2 +|p|^2+ |\nabla p|^2 ) e^{\Psi}.
\end{equation*}
Using Cauchy-Schwarz inequality, we obtain:
\begin{multline*}
2 \int_{\Gamma_e}   \frac{\partial \Psi }{\partial n}\left(  \left|\frac{\partial |\nabla p|^2}{\partial \tau}\right|  +  \left| \frac{\partial  |\nabla u|^2}{\partial \tau} \right|\right)e^{\Psi} \\ \leq s C(\|u\|_{H^3(\Omega)^2}+\|p\|_{H^3(\Omega)})\left( \int_{\Gamma_e} (|\nabla p|^2+|\nabla u|^2)e^{2 \Psi}\right)^{\frac{1}{2}}.
\end{multline*}
Note that $e^{\Psi}$ depends on $s$ on $\Gamma_e$. Hence, reassembling these inequalities, inequality~\ref{ben} becomes: 
\begin{equation} \label{plgrc}
\theta \int_{\Gamma_0}  (|u|^2+ |\nabla u|^2 +|p|^2+ |\nabla p|^2 )
\leq  C \left(K_s  + \frac{1}{s}(\|u\|^2_{H^3(\Omega)^2} +\|p\|^2_{H^3(\Omega)})\right),
\end{equation}
where 
\begin{multline} \label{def-Ks}
 K_s=(\|u\|_{H^3(\Omega)^2}+\|p\|_{H^3(\Omega)}) \left( \int_{\Gamma_e} (|\nabla p|^2+|\nabla u|^2)e^{2 \Psi}\right)^{\frac{1}{2}} \\ +\int_{\Gamma_e}  (|u|^2+ |\nabla u|^2 +|p|^2+ |\nabla p|^2 ) e^{\Psi} . 
\end{multline}
In order to study the dependence with respect to $s$ of $K_s$, we define $B$ by:
\begin{equation} \label{mesure}
B = \|u\|_{L^2(\Gamma_e)^2}+\|p\|_{L^2(\Gamma_e)}+\left \|\frac{\partial u}{\partial n} \right\|_{L^2(\Gamma_e)^2}+ \left \|\frac{\partial p}{\partial n} \right\|_{L^2(\Gamma_e)}.
\end{equation}
Let us estimate the first term in the expression of $K_s$. 
Remark that, thanks to classical interpolation inequalities (see~\cite{adams_fournier}),  there exists $C>0$ such that for all $f \in H^2(\Gamma_e)$:
\begin{equation*} 
\left \|\frac{\partial f}{\partial \tau} \right\|_{L^2(\Gamma_e)} \leq \|f  \|_{H^1(\Gamma_e)} \leq C \|f  \|^{\frac{1}{2}}_{L^2(\Gamma_e)} \|f  \|^{\frac{1}{2}}_{H^2(\Gamma_e)} .
\end{equation*}
Applying the previous inequality, there exists $C>0$ such that:
\begin{equation} \label{titi}
\displaystyle \int_{\Gamma_e }\left |\frac{\partial u}{\partial \tau} \right|^2  \leq C \|u\|_{H^3(\Omega)^2} \|u\|_{L^2(\Gamma_e)^2}  \textrm{ and }   \displaystyle \int_{\Gamma_e}\left |\frac{\partial p}{\partial \tau} \right|^2 \leq C\|p\|_{H^3(\Omega)} \|p\|_{L^2(\Gamma_e)} .
\end{equation}
We obtain, using the fact that $\displaystyle \nabla v=\frac{\partial v }{\partial n}n +\frac{\partial v }{\partial \tau} \tau $ on $\partial \Omega$ for all $v \in H^2(\Omega)$, and thanks to inequality~\ref{titi}, 
 that there exists $C>0$ such that:

\begin{equation*}
\begin{split}
 \int_{\Gamma_e} (|\nabla p|^2+|\nabla u|^2) e^{2 \Psi}   \leq &e^{2 ks}\left(\left\|\frac{\partial p}{\partial n}\right\|_{L^2(\Gamma_e)}^2+\left\|\frac{\partial u}{\partial n}\right\|_{L^2(\Gamma_e)^2}^2Ê\right) \\ 
& +e^{2 ks}\left(C\|u\|_{H^3(\Omega)^2} \|u\|_{L^2(\Gamma_e)^2}+C\|p\|_{H^3(\Omega)} \|p\|_{L^2(\Gamma_e)}\right   )
\\
  \leq  &Ce^{2ks} \|u\|_{H^3(\Omega)^2}  \left(\left\|\frac{\partial u}{\partial n}\right\|_{L^2(\Gamma_e)^2}+ \|u\|_{L^2(\Gamma_e)^2}\right   )
\\
& +   Ce^{2ks}\|p\|_{H^3(\Omega)}  \left(\left\|\frac{\partial p}{\partial n}\right\|_{L^2(\Gamma_e)}\|p\|_{L^2(\Gamma_e)}\right   )\\ 
 \leq & Ce^{ks}\left(\|u\|_{H^3(\Omega)^2}+\|p\|_{H^3(\Omega)}\right) B,
\end{split}
\end{equation*}
where $\displaystyle k=\max_{\Gamma} \Psi_0$. 
Similarly, for the second term in the expression of $K_s$ we prove that
\begin{equation*} 
\begin{split}
 \int_{\Gamma_e}  (|u|^2+ |\nabla u|^2 +|p|^2+ |\nabla p|^2 ) e^{\Psi} 
\leq & Ce^{ks}\left(\|u\|_{H^3(\Omega)^2}+\|p\|_{H^3(\Omega)}\right) B \\\leq & Ce^{ks}\left(\|u\|_{H^3(\Omega)^2}+\|p\|_{H^3(\Omega)}\right)^{\frac{3}{2}} B^{\frac{1}{2}}.
\end{split}
\end{equation*}
Thus, using the two previous inequalities, according to the definition \ref{def-Ks} of $K_s$, we obtain, 
\begin{equation*} 
K_s \leq C e^{ks}   \left(\|u\|_{H^3(\Omega)^2}+\|p\|_{H^3(\Omega)}\right)^{\frac{3}{2}} B^{\frac{1}{2}}. 
\end{equation*}
%
%
%To summarize, using again assumption~\ref{eq10}, we have that there exists $C(A)>0$ such that: 
%%
%\begin{equation} \label{eq12bis} 
%K_s  \leq   C(A)e^{ks} B^{\frac{1}{4}}.
%\end{equation}
%
Let us denote by
 \begin{equation*}
A=\|u\|_{H^3(\Omega)^2}+\|p\|_{H^3(\Omega)}.
\end{equation*}
 Hence we get from~\ref{plgrc}:
\begin{equation*}
\int_{\Gamma_{0}}  (|u|^2+ |\nabla u|^2 +|p|^2+ |\nabla p|^2 )
\leq  C \left(A^{\frac{3}{2}} e^{ks}B^{\frac{1}{2}}  + \frac{A^2}{s} \right),
\end{equation*}
for all $ s \geq 1$. Remark that this inequality is trivially verified for $0< s \leq 1$ by continuity of the trace mapping. Let $\tilde{d}\geq1$. To summarize, we have proved that: 
\begin{equation*}
\int_{\Gamma_{0}}  (|u|^2+ |\nabla u|^2 +|p|^2+ |\nabla p|^2 )
\leq  C A^{\frac{3}{2}}  \left(e^{ks}B^{\frac{1}{2}}  + \frac{ \tilde{d}A^{\frac{1}{2}}}{s}    \right) \textrm{, } \forall s >0 .
\end{equation*}
We now optimize the upper bound with respect to $s$. We denote by 
\begin{equation*}
f(s) =e^{ks}B^{\frac{1}{2}}  +\frac{\tilde{d} A^{\frac{1}{2}}}{s}.
\end{equation*} 
Let us study the function $f$ in $\mathbb{R^*_+}$. We have:
\begin{equation*}
\left \{
	\begin{tabular}{c}
		$\lim_{s \to 0} f(s)= +  \infty $,   \\
		$\lim_{s \to \infty} f(s)= +  \infty $.
	\end{tabular}
\right. 
\end{equation*}
So since $f$ is continuous on $\mathbb{R}^+_*$, f reaches its minimum at a point $s_0>0$. At this point, 
\begin{equation*}
 f'(s_0)=0 \Leftrightarrow B^{\frac{1}{2}} = \frac{e^{-ks_0}\tilde{d}A^{\frac{1}{2}} }{k{s_0}^2}, \textrm{ thus } f(s_0)=\frac{\tilde{d}A^{\frac{1}{2}}}{k s_0^2}+ \frac{\tilde{d}A^{\frac{1}{2}}}{s_0}.
\end{equation*}
Hence:
\begin{equation*}
	\int_{\Gamma_{0}}  (|u|^2+ |\nabla u|^2 +|p|^2+ |\nabla p|^2 ) 	 \leq  \frac{C\tilde{d} A^2}{s_0^\beta} \left(\frac{1}{k}  + 1\right),
\end{equation*}
where $\beta =1$ if $s_0 \geq 1$ and $\beta=2$ otherwise. But, we notice that
\begin{equation*}
\frac{1}{B^{\frac{1}{2}}} =\frac{ k {s_0}^2 e^{k s_0}}{\tilde{d}A^{\frac{1}{2}}} \leq\frac{ k e^{ (k+2) s_0}}{\tilde{d}A^{\frac{1}{2}}},
\end{equation*}
that is to say: 
\begin{equation*}
\frac{1}{s_0} \leq \frac{k+2}{\ln\left(\frac{\tilde{d} A^{\frac{1}{2}}}{k B^{\frac{1}{2}}}\right)},
\end{equation*}
if $\tilde{d}$ is larger than a constant which only depends on $k$ and on the continuity constants of the trace mapping.
In the same way, when $s_0 < 1$ , we obtain: 
\begin{equation*}
\frac{1}{s_0^2} \leq \frac{1}{\ln\left(\frac{\tilde{d} A^{\frac{1}{2}}}{k e^k B^{\frac{1}{2}}}\right)},
\end{equation*}
if $\tilde{d}$ is larger than a constant which only depends on $k$ and on the continuity constants of the trace mapping.
Using the fact that $\ln\left(x^{\frac{1}{2}}\right)= \frac{1}{2} \ln(x)$ for all $x>0$ and according to the definition~\ref{mesure} of $B$, the desired result follows.
\end{proof}
%
%
%
%____
%
% FIN PREUVE lemma 2
%____
%__
%
%
%
\begin{remark}

Let $\Gamma \subseteq \Gamma_e$ be a non-empty open part of $\Gamma_e$. Inequality \ref{ineg_lemma} of  Lemma~\ref{stab} still holds if we replace $\Gamma_e$ by $\Gamma$ in the right-hand side. To prove this, it is sufficient in the definition \ref{choice_Psi_0}  of $\Psi_0$  to define 
$\chi \in \mathcal{C}^2({\partial \Omega})$ such that $\chi = 0 \textrm{ on } \Gamma_0\cup (\Gamma_e \setminus\overline{\Gamma})$ and  $\chi \geq 0 \textrm{ on } \Gamma.$
\end{remark}
%
%
%
%__
%
% DEBUT  PREUVE theorem 1
%____
Let us now prove Theorem~\ref{bebel}.
\begin{proof}[Proof of Theorem~\ref{bebel}]
Since $g\in H^{\frac{5}{2}}(\Gamma_e)^2$ and $q_j \in H^{\frac{5}{2}}(\Gamma_0)$ for $j=1,2$, thanks to Proposition~\ref{regstat} applied for $k=2$, there exists $C(\alpha,M_1,M_2)>0$ such that:
\begin{equation}\label{maj2222}
\|u_j\|_{H^{4}(\Omega)^2} + \|p_j\|_{H^{3}(\Omega)} \leq C(\alpha,M_1,M_2), \text{ for } j=1,\,2.
\end{equation}
In the following, we denote by $u=u_1-u_2$ and $p=p_1-p_2$. We have:
\begin{equation} \label{jqlsk}
(q_2-q_1)u_1 = q_2 u+ \frac{\partial u}{\partial n} -pn \textrm{, on } \Gamma_{0}. 
\end{equation}
Consequently, since $|u_1|  \geq m>0$ on $K$:
\begin{equation} \label{what_what_what}
\|q_1-q_2\|_{L^2(K)} \leq \frac{1}{m} C(M_2) \left( \|u\|_{L^2(\Gamma_0)^2} + \left\|\frac{\partial u}{\partial n}\right\|_{L^2(\Gamma_0)^2} + \| p\|_{L^2(\Gamma_0)} \right).
\end{equation}
Let us denote by 
\begin{equation*}
A=\|u\|_{H^3(\Omega)^2}+\|p\|_{H^3(\Omega)}
\end{equation*}
and 
\begin{equation*}
B=\| u\|_{L^2(\Gamma_e)^2}+ \|p\|_{L^2(\Gamma_e)} + \left\|\frac{\partial{u}}{\partial n}\right\|_{L^2(\Gamma_e)^2}  + \left\|\frac{\partial{p}}{\partial n}\right\|_{L^2(\Gamma_e)}.
\end{equation*} 
By applying Lemma~\ref{stab}, we obtain that there exists there exists $C(M_2)>0$, $C_1>0$  and   $d_0>0$ such that for all $\tilde{d}>d_0$: 
\begin{equation} \label{ineq-int2}
\|q_1-q_2\|_{L^2(K)} \leq \frac{\tilde{d}C(M_2)}{m}   \frac{A} { \left( \ln \left( C_1\tilde{d}^2 \frac{A}{B  }  \right) \right)^{\frac{1}{2}} }.
\end{equation}

We are going to concude the proof by studying the variation of the function defined by  $\displaystyle f_y(x)=\frac{x}{\left(\ln\left(\frac{x}{y}\right)\right)^{\frac{1}{2}}} \textrm{ on } (y,+ \infty)$, for $\displaystyle y=\frac {B}{C_1\tilde{d}^2}$.
We have 
\begin{equation*}
f'_y(x) = \frac{\ln\left(\frac{x}{y}\right)-\frac{1}{2}}{\ln\left(\frac{x}{y}\right)^{\frac{3}{2}}}.
\end{equation*}
Let us denote by $\displaystyle x_0=y e^{\frac{1}{2}}$. The function $f_y$ is decreasing on $(y,x_0]$ and is increasing on $[x_0,+\infty)$. For $\tilde{d}$ large enough, we have $\displaystyle A \geq x_0$ by continuity of the trace mapping. Using~\ref{maj2222} and since $f_y$ is increasing on $[x_0,+\infty)$, we directly deduce that $ \displaystyle f_{y}(A) \leq f_{y}(C(\alpha,M_1,M_2))$.

Using this result in inequality \ref{ineq-int2}, we get that  there exist constants $C(\alpha, M_1,M_2) >0$ and $C_1(\alpha, M_1,M_2)>0$ such that:   
\begin{equation*}  
\|q_1-q_2\|_{L^2(K)}  \leq   \frac{1}{m} \frac{ C(\alpha,M_1,M_2) }{ \left( ln \left(  
\frac{C_1(\alpha, M_1,M_2) } {\|u\|_{L^2(\Gamma_e)^2} + \|p\|_{L^2(\Gamma_e)} + \left\|\frac{\partial u }{\partial n}\right\|_{L^2(\Gamma_e)^2} +\left\|\frac{\partial p }{\partial n}\right\|_{L^2(\Gamma_e)} }  \right) \right)^{\frac{1}{2}} }.
\end{equation*} 
Since $\displaystyle \frac{\partial u}{\partial n}=pn $ on $\Gamma_e$, we obtain the desired inequality. 
\end{proof}
\begin{remark} \label{particular_case}
Note that the assumption that $|u_1|\geq m>0$ on $K$ is essential to pass from~\ref{jqlsk} to~\ref{what_what_what}. Outside the set $K$, an estimate of $q_1-q_2$ may be undetermined or highly unstable. In particular, an estimate of the Robin coefficients on the whole set $\Gamma_0$ might be worst than of logarithmic type (see~\cite{jbalia}). 
\par
Note however that for a simplified problem, it is in fact possible to obtain a logarithmic stability estimate on the whole set $\Gamma_0$ which does not depend on a given reference solution. 
 Assume that $g=g_e n$ and $q \in \mathbb{R}$ are such that
\begin{enumerate}
\item[(A)]  $\displaystyle g_e \in \mathbb{R}$ satisfies $\beta \leq g_e \leq M_1$,
\item[(B)]  $ \displaystyle \alpha \leq q \leq M_2$,
\end{enumerate}
for some $\alpha >0$, $\beta>0$, $M_1>0$ and $M_2>0$. \\
We denote by $(u_{g_e,q},p_{g_e,q})$ the solution of system~\ref{eq9} associated to $q$ and $g=g_en$. Thanks to the weak formulation of the problem, $\int_{\Gamma_e} u_{g,q} \cdot n >0$. Moreover, one can prove by contradiction and thanks to the continuity of the solution with respect to the data that there exists $m_1>0$ which depends on $M_1$, $M_2$, $\alpha$ and $\beta$ such that for all $(g_e,q) \in \mathbb{R}^2$ which satisfies $(A)$ and $(B)$, 
 $$\int_{\Gamma_e} u_{g_e,q} \cdot n \geq m_1.$$
\par
For $i=1,2$, let $q_i \in \mathbb{R}$ satisfy the assumption $(B)$ above. We define by $(u_i,p_i)=(u_{g_e,q_i},p_{g_e,q_i})$  the solution of system~\ref{eq9} associated with $g=g_en$ and $q=q_i$ for i=1,2. %
If we multiply ~\ref{jqlsk} by the unit normal $n$ and we  integrate on $\Gamma_0$, we obtain: 
\begin{equation*}
(q_2-q_1) \int_{\Gamma_0 }u_1 \cdot n  = q_2 \int_{\Gamma_0 }( u_1-u_2 )\cdot n+\displaystyle \int_{\Gamma_0 }\left( \frac{\partial u_1}{\partial n} -\frac{\partial u_2}{\partial n} \right) \cdot n-\int_{\Gamma_0}(p_1-p_2).
\end{equation*}
Since $u_1$ is divergence free, $\displaystyle \left |\int_{\Gamma_0 }u_1 \cdot n \right |=\left |\int_{\Gamma_e }u_1 \cdot n \right | \geq m_1$. Thus, we get
\begin{equation*}
\begin{split}
|q_1&- q_2| \\ \leq &C(M_1,M_2,\alpha, \beta) \left( \left \|u_1-u_2 \right\|_{L^2(\Gamma_0)^2} +\left \| \frac{\partial u_1}{\partial n} -\frac{\partial u_2}{\partial n}\right \|_{L^2(\Gamma_0)^{2}} + \| p_1-p_2 \|_{L^2(\Gamma_0)} \right).
\end{split}
\end{equation*}
We conclude as in the proof of Theorem~\ref{bebel} and obtain that  positive constants $C(M_1,M_2,\alpha)$  and  $C_1(M_1,M_2,\alpha,\beta)$ such that
\begin{equation*}
|q_1-q_2| \leq 
\frac{C(M_1,M_2,\alpha, \beta)} { \left(\ln \left( \frac{C_1(M_1,M_2,\alpha)}{\| u_1-u_2\|_{L^2(\Gamma_e)^2}+ \|p_1-p_2\|_{L^2(\Gamma_e)} +  \left\| \frac{\partial p_1}{\partial n} - \frac{\partial p_2}{\partial n}\right\|_{L^2(\Gamma_e)}  }   \right)\right)^{\frac{1}{2} }}.
\end{equation*}
\end{remark}
%
%
%
%------------------------------
%------------------------------
\subsection{Evolution problem}
%------------------------------
%------------------------------
In order to use semigroup properties, we begin by introducing the Stokes operator associated with the Robin boundary conditions on $\Gamma_0$. 
\subsubsection{Properties of the Stokes operator}
We recall that the bilinear form $a_q$ is defined by~\ref{eq5}. 
\begin{definition} \label{operator_A_q}
We define the set $\mathcal{D}(A_q)$ as follows:
\begin{equation*}
\mathcal{D}(A_q)= \{ u \in V / \exists C>0, \forall v \in V, |a_q(u,v)| \leq C \|v\|_{ L^2(\Omega)^2} \},
\end{equation*}
and the operator $A_q: \mathcal{D}(A_q) \subset H \to H$ by: 
\begin{equation*}
\forall u \in \mathcal{D}(A_q) \textrm{, } a_q(u,v)=(A_q u,v)_{ L^2(\Omega)^2}, \forall v \in V.
\end{equation*}
\end{definition}
\begin{proposition} \label{valeur_propre}
Let $\alpha >0$ and $q \in  L^{\infty}(\Gamma_0)$ such that $q \geq \alpha$ almost everywhere on $\Gamma_0$. The operator $A_q$ has the following properties: 
\begin{enumerate}
\item $A_q \in \mathcal{L}(\mathcal{D}(A_q),H)$ is invertible and its inverse is compact on $H$.
\item $A_q$ is selfadjoint.
\end{enumerate}
As a consequence, $A_q$ admits a family of eigenvalues $\phi_q^l$
\begin{equation*}
A_q \phi_q^l = \lambda_q^l \phi_q^l \textrm{ with } 0 <    \lambda_q^1 \leq \lambda_q^2 \leq... \leq \lambda_q^j \textrm{ and } \lim_{j \to \infty} \lambda_q^j= + \infty, 
\end{equation*}
which is complete and orthogonal both in $H$ and $V$.
\end{proposition}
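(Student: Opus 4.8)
\textbf{Proof plan for Proposition~\ref{valeur_propre}.}

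The strategy is the classical one for elliptic operators in divergence form: derive the properties of $A_q$ from the properties of the bilinear form $a_q$ established in the proof of Proposition~\ref{existence1}, and then invoke the spectral theorem for compact selfadjoint operators. First I would record that $a_q$ is a symmetric, continuous, and (since $q\geq\alpha>0$, by the generalized Poincar\'e inequality) coercive bilinear form on $V$, exactly as in the proof of Proposition~\ref{existence1}. By Lax--Milgram, for every $f\in H$ the problem $a_q(u,v)=(f,v)_{L^2(\Omega)^2}$ for all $v\in V$ has a unique solution $u\in V$; this $u$ automatically lies in $\mathcal{D}(A_q)$ (the estimate $|a_q(u,v)|=|(f,v)_{L^2}|\leq\|f\|_{L^2}\|v\|_{L^2}$ holds), and $A_q u=f$. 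Hence $A_q:\mathcal{D}(A_q)\to H$ is a bijection, and its inverse $A_q^{-1}:H\to H$ is, by coercivity, bounded: $\|A_q^{-1}f\|_{L^2(\Omega)^2}\leq C\|A_q^{-1}f\|_{H^1(\Omega)^2}\leq C\|f\|_{L^2(\Omega)^2}$.

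Next, compactness of $A_q^{-1}$ on $H$: the map $A_q^{-1}$ sends $H$ continuously into $V\subset H^1(\Omega)^2$, and by the Rellich--Kondrachov theorem the embedding $H^1(\Omega)^2\hookrightarrow L^2(\Omega)^2$ is compact (here $\Omega$ is a bounded Lipschitz domain, so the embedding is compact); composing, $A_q^{-1}:H\to H$ is compact. Selfadjointness of $A_q^{-1}$ (equivalently of $A_q$) follows from the symmetry of $a_q$: for $f,h\in H$, writing $u=A_q^{-1}f$, $w=A_q^{-1}h$, one has $(A_q^{-1}f,h)_{L^2}=(u,A_q w)_{L^2}=a_q(w,u)=a_q(u,w)=(A_q u,w)_{L^2}=(f,A_q^{-1}h)_{L^2}$. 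Moreover $A_q^{-1}$ is positive, since $(A_q^{-1}f,f)_{L^2}=a_q(u,u)\geq 0$ and vanishes only for $u=0$. This gives item (1) and item (2).

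Finally, the spectral decomposition: apply the Hilbert--Schmidt spectral theorem to the compact, selfadjoint, positive, injective operator $A_q^{-1}$ on the separable Hilbert space $H$. This yields an orthonormal basis $(\phi_q^l)_{l\geq 1}$ of $H$ consisting of eigenvectors, $A_q^{-1}\phi_q^l=\mu_q^l\phi_q^l$ with $\mu_q^l>0$ and $\mu_q^l\to 0$; setting $\lambda_q^l=1/\mu_q^l$ gives $A_q\phi_q^l=\lambda_q^l\phi_q^l$ with $0<\lambda_q^1\leq\lambda_q^2\leq\cdots$ and $\lambda_q^l\to+\infty$. Completeness in $H$ is immediate; orthogonality in $V$ follows because for $l\neq m$, $(\phi_q^l,\phi_q^m)_{a_q}:=a_q(\phi_q^l,\phi_q^m)=\lambda_q^l(\phi_q^l,\phi_q^m)_{L^2}=0$, and the inner product $a_q(\cdot,\cdot)$ is equivalent to the $H^1$-inner product on $V$ by continuity and coercivity; one checks that the $\phi_q^l$ form a complete system in $V$ as well, since $V$ is dense in $H$ and any $v\in V$ orthogonal in $a_q$ to all $\phi_q^l$ is orthogonal in $L^2$ to all $\phi_q^l$, hence zero.

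The only mildly delicate point is the compactness of the embedding $H^1(\Omega)^2\hookrightarrow L^2(\Omega)^2$ on a merely Lipschitz domain; this is standard (Rellich--Kondrachov holds for bounded Lipschitz domains), so there is no real obstacle. The rest is bookkeeping with Lax--Milgram and the spectral theorem.
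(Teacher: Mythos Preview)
Your argument is correct and is precisely the ``classical'' route the paper alludes to: the authors do not spell out a proof at all but simply refer to Brezis and Raviart--Thomas for the standard Lax--Milgram/Rellich--Kondrachov/Hilbert--Schmidt argument you have written down. There is nothing to add.
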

\begin{proof}[Proof of Proposition~\ref{valeur_propre}]
It relies on classical arguments for which we refer to~\cite{brezis} or~\cite{raviart_thomas}.
\end{proof}
\begin{remark} \label{eq18b}
Let $\alpha>0$. There exists a constants $\mu>0$ such that for all $q \in L^{\infty}(\Gamma_{0})$ such that $q \geq \alpha$, for $l \in \mathbb{N}$:
\begin{equation} \label{eq18}
\lambda_q^l \geq \mu.
\end{equation}
Indeed, $\lambda_q^l \geq \lambda_q^1 = (A_q \phi_q^1,\phi_q^1)_{L^2(\Omega)^2}=a_{q}(\phi_q^1,\phi_q^1) \geq a_{\alpha}(\phi_q^1,\phi_q^1) \geq \mu \|\phi_q^1\|^2_{L^2(\Omega)^2}=\mu$, where $\mu$ is the coercivity constant associated with the bilinear form $a_{\alpha}$. 
\end{remark}
\begin{proposition} \label{isometry}
The operator $A_q^{\frac{1}{2}}: (V,a_q(.,.)^{\frac{1}{2}}) \to (H,\|$ $\|_{L^2(\Omega)^2})$ is an isometry.
\end{proposition}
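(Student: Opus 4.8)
The plan is to show that $A_q^{\frac12}$ maps $V$ equipped with the inner product $a_q(\cdot,\cdot)$ isometrically onto $H$ with its $L^2$ inner product, using the spectral decomposition of $A_q$ furnished by Proposition~\ref{valeur_propre}. First I would recall that, since $A_q$ is selfadjoint and positive with compact inverse, the eigenfunctions $(\phi_q^l)_{l\geq 1}$ form an orthonormal basis of $H$ and an orthogonal basis of $V$; moreover $A_q^{\frac12}$ is defined by functional calculus via
\begin{equation*}
A_q^{\frac12}\Big(\sum_{l} c_l \phi_q^l\Big) = \sum_{l} \sqrt{\lambda_q^l}\, c_l \phi_q^l,
\end{equation*}
with domain $\mathcal{D}(A_q^{\frac12})=\{\sum_l c_l\phi_q^l : \sum_l \lambda_q^l |c_l|^2 <\infty\}$. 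The key point is to identify this domain with $V$ and the graph-type norm with $a_q(\cdot,\cdot)^{\frac12}$.

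The main computation is the following: for $u=\sum_l c_l \phi_q^l \in V$, one has, using the definition of $A_q$ and the fact that $a_q(\phi_q^l,\phi_q^m)=(A_q\phi_q^l,\phi_q^m)_{L^2(\Omega)^2}=\lambda_q^l\,\delta_{lm}$ (the $\phi_q^l$ being $L^2$-orthonormal),
\begin{equation*}
a_q(u,u)=\sum_{l,m} c_l \overline{c_m}\, a_q(\phi_q^l,\phi_q^m)=\sum_{l} \lambda_q^l |c_l|^2 .
\end{equation*}
On the other hand,
\begin{equation*}
\|A_q^{\frac12} u\|_{L^2(\Omega)^2}^2 = \Big\|\sum_l \sqrt{\lambda_q^l}\,c_l \phi_q^l\Big\|_{L^2(\Omega)^2}^2 = \sum_l \lambda_q^l |c_l|^2 ,
\end{equation*}
again by $L^2$-orthonormality. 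Comparing the two identities gives $\|A_q^{\frac12}u\|_{L^2(\Omega)^2}=a_q(u,u)^{\frac12}$ for every $u\in V$, which is the claimed isometry property.

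To make this fully rigorous I would, second, check that the two descriptions of $V$ agree: a series $\sum_l c_l \phi_q^l$ converges in $V$ (equivalently in $H^1(\Omega)^2$, by the orthogonality of the $\phi_q^l$ in $V$ and the coercivity/continuity of $a_q$ on $V$) if and only if $\sum_l \lambda_q^l|c_l|^2<\infty$, which is exactly the condition $u\in\mathcal{D}(A_q^{\frac12})$; Remark~\ref{eq18b} ensures $\lambda_q^l\geq\mu>0$ so that $a_q(u,u)^{\frac12}$ is genuinely a norm equivalent to the one induced by $V$. Finally, surjectivity onto $H$ follows because $A_q^{\frac12}$ sends the orthonormal basis $(\phi_q^l)$ of $H$ to the orthogonal family $(\sqrt{\lambda_q^l}\,\phi_q^l)$, whose span is dense, and the isometry identity shows the range is closed; hence $A_q^{\frac12}$ is an isometric isomorphism from $(V,a_q(\cdot,\cdot)^{\frac12})$ onto $(H,\|\cdot\|_{L^2(\Omega)^2})$. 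I expect the only mildly delicate step to be the justification that membership in $V$ is equivalent to square-summability of $(\sqrt{\lambda_q^l}\,c_l)$, i.e. that the $V$-norm and the $a_q^{1/2}$-norm control each other on the span of the eigenfunctions; everything else is a direct consequence of the spectral theorem applied in Proposition~\ref{valeur_propre}.
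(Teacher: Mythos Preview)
The paper states Proposition~\ref{isometry} without proof, treating it as a standard consequence of the spectral theory recorded in Proposition~\ref{valeur_propre}. Your argument via the eigenfunction expansion is correct and is precisely the standard justification one has in mind here: the identity $a_q(\phi_q^l,\phi_q^m)=(A_q\phi_q^l,\phi_q^m)_{L^2}=\lambda_q^l\delta_{lm}$ immediately yields $a_q(u,u)=\sum_l\lambda_q^l|c_l|^2=\|A_q^{1/2}u\|_{L^2}^2$, and the identification $\mathcal{D}(A_q^{1/2})=V$ follows from the coercivity and continuity of $a_q$ on $V$ together with completeness of the eigenbasis in both spaces. Nothing is missing.
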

\begin{proposition} \label{semigroupe}
Let $\alpha > 0$ and $q \in  L^{\infty}(\Gamma_0)$ be such that $q \geq \alpha$ almost everywhere on $\Gamma_0$. The operator $-A_q$ generates an analytic semigroup on $H$. This analytic semigroup is explicitly given by: 
\begin{equation} \label{eq18bis}
e^{-tA_q} f = \sum_{ l \geq 1} e^{-t \lambda_q^l} (\phi_q^l,f)_{L^2(\Omega)^2}\phi^l_q,
\end{equation}
for all $f \in H$.
\end{proposition}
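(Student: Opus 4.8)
The plan is to read everything off the spectral decomposition furnished by Proposition~\ref{valeur_propre}: $A_q$ is self-adjoint, has compact inverse, and by Remark~\ref{eq18b} its spectrum is the set of eigenvalues $\{\lambda_q^l\}_{l\geq1}\subset[\mu,+\infty)$, with a complete orthogonal eigenbasis $(\phi_q^l)_{l\geq1}$ of $H$; normalize so that $\|\phi_q^l\|_{L^2(\Omega)^2}=1$. First I would \emph{define} the candidate operator $S(t)f=\sum_{l\geq1}e^{-t\lambda_q^l}(\phi_q^l,f)_{L^2(\Omega)^2}\phi_q^l$ for $t\geq0$ and $f\in H$: by Parseval's identity and $0<e^{-t\lambda_q^l}\leq e^{-\mu t}\leq1$ the series converges in $H$ and $\|S(t)\|_{\mathcal L(H)}\leq e^{-\mu t}$. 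Then I would verify the $C_0$-semigroup axioms — $S(0)=\mathrm{Id}$ from completeness, $S(t)S(s)=S(t+s)$ from orthonormality, and $\|S(t)f-f\|^2_{L^2(\Omega)^2}=\sum_l(e^{-t\lambda_q^l}-1)^2|(\phi_q^l,f)|^2\to0$ as $t\to0^+$ by dominated convergence for strong continuity — so that $(S(t))_{t\geq0}$ is a $C_0$-semigroup of contractions on $H$.

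Next I would identify the generator $(B,\mathcal D(B))$ of $(S(t))$ with $-A_q$. For $f\in\mathcal D(A_q)$ one has $\sum_l(\lambda_q^l)^2|(\phi_q^l,f)|^2=\|A_qf\|_{L^2(\Omega)^2}^2<\infty$, and since $|(e^{-t\lambda}-1)/t|\leq\lambda$ for $\lambda,t>0$, dominated convergence gives $\frac1t(S(t)f-f)\to-\sum_l\lambda_q^l(\phi_q^l,f)\phi_q^l=-A_qf$ in $H$; thus $\mathcal D(A_q)\subset\mathcal D(B)$ with $B=-A_q$ there. To upgrade this inclusion to an equality I would compute, for $\lambda>0$, the resolvent $(\lambda-B)^{-1}=\int_0^{\infty}e^{-\lambda t}S(t)\,dt$ and check that it equals $f\mapsto\sum_l(\lambda+\lambda_q^l)^{-1}(\phi_q^l,f)\phi_q^l$, which one verifies directly to be $(\lambda+A_q)^{-1}$; hence $B=-A_q$, and this computation also yields the explicit formula~\ref{eq18bis} since $S(t)=e^{-tB}=e^{-tA_q}$.

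For analyticity I would extend the series to complex time: for $z\in\mathbb C$ with $\mathrm{Re}\,z>0$, set $S(z)f=\sum_l e^{-z\lambda_q^l}(\phi_q^l,f)\phi_q^l$; then $\|S(z)\|_{\mathcal L(H)}\leq e^{-\mu\,\mathrm{Re}\,z}\leq1$, and on compact subsets of the right half-plane the (entire) partial sums converge uniformly, so $z\mapsto S(z)$ is holomorphic there — that is, $(S(t))$ is a bounded analytic semigroup of angle $\pi/2$ generated by $-A_q$. Equivalently, one can invoke the sectorial resolvent characterization: since $\sigma(-A_q)\subset(-\infty,-\mu]$, for any $\delta\in(0,\pi/2)$ and $\lambda$ in the sector $\{|\arg\lambda|\leq\frac\pi2+\delta\}$ one has $\|(\lambda+A_q)^{-1}\|_{\mathcal L(H)}=\mathrm{dist}(\lambda,\{-\lambda_q^l\})^{-1}\leq\mathrm{dist}(\lambda,(-\infty,0])^{-1}\leq(|\lambda|\cos\delta)^{-1}$.

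There is no real obstacle: the whole statement is a textbook consequence of the spectral theorem for self-adjoint operators with compact resolvent (see~\cite{brezis}). The only points deserving a little care are obtaining the \emph{exact} domain of the generator rather than a mere inclusion, and justifying that the complex series defines a holomorphic $\mathcal L(H)$-valued function on $\{\mathrm{Re}\,z>0\}$; both are handled by the elementary estimates above.
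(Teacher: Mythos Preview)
Your proof is correct and is precisely the standard spectral-decomposition argument for self-adjoint operators with compact resolvent; the paper itself does not spell out any details, simply stating that the result follows from the construction of $A_q$ and referring to~\cite{pazy} and~\cite{dautray_lions}. In other words, you have filled in exactly what those references contain, so there is no divergence in approach---only in level of detail.
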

\begin{proof}[Proof of Proposition~\ref{semigroupe}]
It follows from the construction of the operator $A_q$. We refer to ~\cite{pazy} and~\cite{dautray_lions} for details.
\end{proof}
\begin{proposition} \label{regularite}
Let $\alpha>0$, $M>0$, $k \in \mathbb{N}$ and  $s \in \mathbb{R}$ be such that  $s>\frac{d-1}{2}$ and $s \geq \frac{1}{2}+k$. We assume that $\Omega$ is of class $\mathcal{C}^{k+1,1}$ and that  $q \in  H^s(\Gamma_0)$  is such that $q \geq \alpha$ on $\Gamma_0$. 
\par
Then for each $f \in H \cap H^k(\Omega)^2 $, there exists $u \in H^{k+2}(\Omega)^2$ solution of $A_qu=f$ if and only if there exists $p \in H^{k+1}(\Omega)$ such that $(u,p)$ is solution of the following problem:  
\begin{equation} \label{mem56}
      \left\{
      \begin{array}{ccll}
            - \Delta u  + \nabla p				&	=	&	f, 	& \textrm{ in } \Omega, \\
            div \textrm{ } u 						&	=	&	0,	& \textrm{ in } \Omega, \\
     	   \frac{\partial u }{\partial n}-pn				&	=	&	0,	& \textrm{ on } \Gamma_e, \\
           \frac{\partial u }{\partial n} -p n +qu				&	=	&	0,	& \textrm{ on } \Gamma_0.
     \end{array}
     \right.
\end{equation}
Moreover, there exists a constant $C(\alpha,M)>0$ such that for every $q \in H^s(\Gamma_0)$ satisfying  $\|q\|_{H^s(\Gamma_0) } \leq M$:
\begin{equation*}
\|u\|_{H^{2+k}(\Omega)^2} \leq  C(\alpha,M) \|f\|_{H^k(\Omega)^2}.
\end{equation*}
\end{proposition}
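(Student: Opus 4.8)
The plan is to prove the equivalence by identifying the weak formulation defining $A_q$ with the variational formulation of system~\ref{mem56}, and then to invoke the regularity result of Proposition~\ref{regstat} to get the Sobolev estimate. First I would establish the ``if'' direction: suppose $(u,p)$ solves \ref{mem56} with $u \in H^{k+2}(\Omega)^2$ and $p \in H^{k+1}(\Omega)$. Multiplying the momentum equation by $v \in V$, integrating over $\Omega$, and integrating by parts, the divergence-free condition on $v$ kills the pressure term in $\Omega$; the boundary terms on $\Gamma_e$ vanish because $\frac{\partial u}{\partial n} - pn = 0$ there, and on $\Gamma_0$ the boundary term contributes $\int_{\Gamma_0} qu\cdot v$ using the Robin condition. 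This yields $a_q(u,v) = (f,v)_{L^2(\Omega)^2}$ for all $v \in V$. Since $f \in L^2(\Omega)^2$, this shows $u \in \mathcal{D}(A_q)$ and $A_q u = f$ (after checking $u \in V$, which is clear, and that $f \in H$, which holds because $f$ can be projected onto $H$; more precisely one restricts the identity to $v\in V$ and uses that $(f,v)_{L^2}$ depends only on the $H$-component of $f$).

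For the ``only if'' direction: suppose $f \in H \cap H^k(\Omega)^2$ and $u \in H^{k+2}(\Omega)^2 \cap \mathcal{D}(A_q)$ with $A_q u = f$, so $a_q(u,v) = (f,v)_{L^2(\Omega)^2}$ for all $v \in V$. Running the integration by parts backwards (now legitimate since $u$ is smooth enough for traces of $\frac{\partial u}{\partial n}$ to make sense in $H^{k+1/2}$), one gets $\int_\Omega (-\Delta u - f)\cdot v = 0$ for all $v \in V$, so by De Rham's theorem there exists $p \in L^2(\Omega)$ (unique up to the boundary conditions, as in Proposition~\ref{existence1}) with $-\Delta u + \nabla p = f$ in $\Omega$; the regularity $p \in H^{k+1}(\Omega)$ follows since $\nabla p = f + \Delta u \in H^k(\Omega)^2$. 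Then re-integrating by parts with this $p$ and using that the interior equation now holds identically, the remaining boundary integral $\int_{\partial\Omega}(\frac{\partial u}{\partial n} - pn)\cdot v + \int_{\Gamma_0} qu\cdot v = 0$ must vanish for all $v \in V$; testing against suitable $v$ supported near $\Gamma_e$ and near $\Gamma_0$ separately (using $\overline{\Gamma}_e \cap \overline{\Gamma}_0 = \emptyset$) recovers the two boundary conditions of~\ref{mem56}.

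Finally, for the estimate: once we know $(u,p)$ solves~\ref{mem56}, this is exactly system~\ref{eq3} with data $g = 0$ on $\Gamma_e$ and right-hand side $f \in H^k(\Omega)^2$, under the hypotheses $s > \frac{d-1}{2}$, $s \geq k + \frac12$, $\Omega$ of class $\mathcal{C}^{k+1,1}$, $q \geq \alpha$, $\|q\|_{H^s(\Gamma_0)} \leq M$. Proposition~\ref{regstat} then applies verbatim and gives $\|u\|_{H^{k+2}(\Omega)^2} + \|p\|_{H^{k+1}(\Omega)} \leq C(\alpha,M)\|f\|_{H^k(\Omega)^2}$, from which the stated inequality follows by dropping the pressure term.

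The main obstacle I anticipate is the careful bookkeeping of the boundary terms in the integration by parts — in particular justifying that the trace $\frac{\partial u}{\partial n} - pn$ is well-defined and that testing against $v \in V$ (divergence-free, but with arbitrary boundary values on each of $\Gamma_e$ and $\Gamma_0$ separately) genuinely separates the two boundary conditions. This uses crucially that $\overline{\Gamma}_e$ and $\overline{\Gamma}_0$ are disjoint, so one can find divergence-free test fields localized near either piece, and that the regularity of $u$ and $p$ is high enough ($H^{k+2}$ and $H^{k+1}$ with $k \geq 0$) for all the boundary integrals to be classical. The rest is a routine application of De Rham's theorem and Proposition~\ref{regstat}.
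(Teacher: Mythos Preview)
Your proposal is correct and follows exactly the approach the paper has in mind: the paper's own proof is the single sentence ``This result follows from the construction of the operator $A_q$ and from Proposition~\ref{regstat},'' and what you have written is precisely the unpacking of that sentence --- matching the variational identity $a_q(u,v)=(f,v)_{L^2}$ with the weak form of~\ref{mem56} via integration by parts and De Rham, then invoking Proposition~\ref{regstat} with $g=0$ for the estimate. Your identification of the boundary-separation issue (using $\overline{\Gamma}_e\cap\overline{\Gamma}_0=\emptyset$ to localize test fields, with the additive constant in $p$ fixed by the boundary conditions as in Proposition~\ref{existence1}) is the right point to flag, and it resolves cleanly.
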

\begin{proof}[Proof of Proposition~\ref{regularite}]
This result follows from the construction of the operator $A_q$ and from Proposition~\ref{regstat}.
\end{proof}
\begin{corollary} \label{magnifique}
Let $\alpha>0$, $k \in \mathbb{N}^*$ and  $s \in \mathbb{R}$ be such that  $s>\frac{d-1}{2}$ and $s \geq \frac{1}{2}+2(k-1)$. We assume that $\Omega$ is of class $\mathcal{C}^{2k-1,1}$ and that   $q \in  H^s(\Gamma_0)$ is such that $q \geq \alpha$ on $\Gamma_0$. 
\par
Then $\mathcal{D}(A_q^k) \hookrightarrow  H^{2k}(\Omega)^2 \cap H$.
\end{corollary}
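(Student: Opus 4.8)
The plan is to prove Corollary~\ref{magnifique} by induction on $k$, using Proposition~\ref{regularite} as the base case and the recursive structure $\mathcal{D}(A_q^k) = \{u \in \mathcal{D}(A_q) : A_q u \in \mathcal{D}(A_q^{k-1})\}$. For $k=1$ the statement $\mathcal{D}(A_q)\hookrightarrow H^2(\Omega)^2 \cap H$ is exactly the content of Proposition~\ref{regularite} with $k=0$ (noting $f \in H$ always lies in $L^2(\Omega)^2 = H^0(\Omega)^2$), together with the estimate $\|u\|_{H^2(\Omega)^2}\leq C(\alpha,M)\|f\|_{L^2(\Omega)^2}$, which gives continuity of the embedding once one recalls that $\mathcal{D}(A_q)$ carries the graph norm $\|u\|_{L^2} + \|A_q u\|_{L^2}$ (equivalent to $\|A_q u\|_{L^2}$ by invertibility, Proposition~\ref{valeur_propre}).

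For the induction step, I would assume $\mathcal{D}(A_q^{k-1})\hookrightarrow H^{2(k-1)}(\Omega)^2 \cap H$ under the hypotheses "$\Omega$ of class $\mathcal{C}^{2k-3,1}$ and $s \geq \frac12 + 2(k-2)$", and take $u \in \mathcal{D}(A_q^k)$. Then $f := A_q u \in \mathcal{D}(A_q^{k-1})$, so by the induction hypothesis $f \in H^{2(k-1)}(\Omega)^2 \cap H$. Now apply Proposition~\ref{regularite} with the integer $k' := 2(k-1) = 2k-2$ in place of its "$k$": this requires $\Omega$ of class $\mathcal{C}^{k'+1,1} = \mathcal{C}^{2k-1,1}$ and $s \geq \frac12 + k' = \frac12 + 2(k-1)$, which are precisely the hypotheses of the corollary (and they imply the weaker hypotheses needed for the induction hypothesis at level $k-1$). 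Proposition~\ref{regularite} then yields $u \in H^{k'+2}(\Omega)^2 = H^{2k}(\Omega)^2$, together with $\|u\|_{H^{2k}(\Omega)^2} \leq C(\alpha,M)\|f\|_{H^{2k-2}(\Omega)^2} = C(\alpha,M)\|A_q u\|_{H^{2k-2}(\Omega)^2}$.

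To get the continuity of the embedding $\mathcal{D}(A_q^k)\hookrightarrow H^{2k}(\Omega)^2$, I would chain the estimates: $\|A_q u\|_{H^{2k-2}(\Omega)^2} \leq C\|A_q^{k-1}(A_q u)\|_{L^2(\Omega)^2} = C\|A_q^k u\|_{L^2(\Omega)^2}$ by the induction hypothesis applied to $A_q u \in \mathcal{D}(A_q^{k-1})$, and then $\|A_q^k u\|_{L^2(\Omega)^2} \leq \|u\|_{\mathcal{D}(A_q^k)}$ for the graph norm on $\mathcal{D}(A_q^k)$. Composing gives $\|u\|_{H^{2k}(\Omega)^2} \leq C(\alpha,M)\|u\|_{\mathcal{D}(A_q^k)}$, which is the desired continuous inclusion; the inclusion into $H$ is immediate since $\mathcal{D}(A_q^k)\subset \mathcal{D}(A_q) \subset V \subset H$.

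The only subtle point — and the one I would be most careful about — is the bookkeeping of the regularity indices: one must check that applying Proposition~\ref{regularite} at the integer level $2k-2$ genuinely needs exactly $\mathcal{C}^{2k-1,1}$ and $s \geq \frac12 + 2(k-1)$, and that these conditions are monotone enough that the induction hypothesis at level $k-1$ (which needs only $\mathcal{C}^{2k-3,1}$ and $s \geq \frac12 + 2(k-2)$) is automatically available. There is no real analytic obstacle; the argument is a clean unwinding of the elliptic regularity result of Proposition~\ref{regstat} through the semigroup-theoretic definition of the domains $\mathcal{D}(A_q^k)$.
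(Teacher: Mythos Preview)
Your proposal is correct and follows essentially the same inductive scheme as the paper: the base case $k=1$ is Proposition~\ref{regularite} at level $0$, and the induction step feeds $A_q u \in \mathcal{D}(A_q^{k-1}) \hookrightarrow H^{2k-2}(\Omega)^2$ back into Proposition~\ref{regularite} (equivalently Proposition~\ref{regstat}) at level $2k-2$. Your write-up is in fact more careful than the paper's, which only spells out $k=2$ and leaves the index bookkeeping and the continuity of the embedding implicit.
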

\begin{proof}[Proof of Corollary~\ref{magnifique}]
For $k=1$, it is clear. Take now $k = 2$. Let $u \in \mathcal{D}(A_q^2)$. We have 
\begin{equation*}
 A^2_qu = f \Leftrightarrow
 	\left    \{
		\begin{array}{c}
                   A_q u =v  \\
                   A_q v=f         
                   \end{array}     
         \right. 
 \end{equation*}
But $v \in \mathcal{D}(A_q) \subset H^2(\Omega)^2 \cap H$ by assumption, so $u \in H^4(\Omega)^2 \cap H$ thanks to the regularity properties of the solution of the Stokes problem summarize in Proposition~\ref{regstat}. We conclude by induction on $k$.
\end{proof}
\begin{remark}
Let us remark that, due to the prescribed boundary conditions, $\mathcal{D}(A_q)$ is not equal to $H^2(\Omega)^2 \cap H$. 
\end{remark}
\subsubsection{The flux $g$ does not depend on $t$}
In this paragraph, we consider the evolution problem~\ref{eq1} given in the introduction. We assume in this part that $ g $ does not depend on time. Let $\alpha>0$, $M_1>0$ and $M_2>0$. In the following, we assume that 
\begin{equation} \label{eq13}
g \in H^{\frac{5}{2}}(\Gamma_e)^2 \textrm{ is non identically zero and } \|g\|_{H^{ \frac{5}{2}}(\Gamma_e)^2 } \leq M_1,
\end{equation}
\begin{equation} \label{eq14}
q \in  H^{\frac{5}{2}}(\Gamma_0) \textrm{ is such that }  \|q\|_{H^{\frac{5}{2}}(\Gamma_0) } \leq M_2 \textrm{ and } q \geq \alpha \text{ on } \Gamma_0.
\end{equation}
Let us prove the following theorem: 
\begin{theorem} \label{ok}
Let $\alpha >0$, $M_1> 0$, $M_2>0$ and $u_0 \in H \cap H^3(\Omega)^2$. We assume that $g$ satisfies~\ref{eq13} and that $q_j$ satisfies~\ref{eq14} for $j=1,2$. We denote  by $(u_j,p_j)$ the solution of system \ref{eq1} associated to $q=q_j$, for $j=1,2$. Let $K$ be a compact subset of $\{ x \in \Gamma_0 / v_1(x) \not= 0\}$, where  $(v_1,\zeta_1)$ is the solution of system \ref{eq9} with $q=q_1$ and let $m >0$ be a constant such that $|v_1| \geq m$ on $K$. Then, there exist $C(\alpha,M_1,M_2)>0$ and $C_1(M_1,M_2,\alpha)>0$ such that 
\begin{equation*}
\begin{split}
&\|q_1-q_2\|_{L^2(K)}  \leq 
\\ &\frac{1}{m}\frac{C(\alpha,M_1,M_2)}  {\left( \ln \left ( \frac{ C_1(M_1,M_2,\alpha)}{  \| u_1-u_2\|_{L^{\infty}(0,+\infty;L^2(\Gamma_e)^2)}+ \|p_1-p_2\|_{L^{\infty}(0,+\infty;L^2(\Gamma_e) )} +\left\| \frac{\partial p_1}{\partial n} -\frac{\partial p_2}{\partial n} \right \|_{L^{\infty}(0,+\infty;L^2(\Gamma_e))}  }  \right )\right) ^\frac{1}{2}}.
\end{split}
\end{equation*}
\end{theorem}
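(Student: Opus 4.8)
The plan is to deduce Theorem~\ref{ok} from the stationary estimate of Theorem~\ref{bebel} applied to the reference solutions $(v_1,\zeta_1)$ and $(v_2,\zeta_2)$ of system~\ref{eq9} with $q=q_1$ and $q=q_2$, and then to control the distance between the boundary data of $(v_j,\zeta_j)$ and those of the unsteady solutions $(u_j,p_j)$ using the analytic semigroup generated by $-A_{q_j}$. First I would record that, since $g$ satisfies~\ref{eq13} and $q_j$ satisfies~\ref{eq14}, Theorem~\ref{bebel} gives

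\begin{equation*}
\|q_1-q_2\|_{L^2(K)} \leq \frac{1}{m}\frac{C(M_1,M_2,\alpha)}{\left(\ln\left(\frac{C_1(M_1,M_2,\alpha)}{B_{\mathrm{stat}}}\right)\right)^{\frac12}},
\end{equation*}

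where $B_{\mathrm{stat}}=\|v_1-v_2\|_{L^2(\Gamma_e)^2}+\|\zeta_1-\zeta_2\|_{L^2(\Gamma_e)}+\left\|\frac{\partial\zeta_1}{\partial n}-\frac{\partial\zeta_2}{\partial n}\right\|_{L^2(\Gamma_e)}$, so the whole matter is to bound $B_{\mathrm{stat}}$ by the (time-uniform) unsteady measurement term appearing in the statement, call it $B_{\mathrm{ev}}$, up to a constant.

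The core step is an exponential decay-to-equilibrium estimate. For $j=1,2$, set $w_j=u_j-v_j$, $\pi_j=p_j-\zeta_j$; then $w_j$ solves the \emph{homogeneous} unsteady Stokes system with Robin condition on $\Gamma_0$, zero Neumann-type data on $\Gamma_e$, and initial datum $w_j(0)=u_0-v_j\in H$. Hence $w_j(t)=e^{-tA_{q_j}}(u_0-v_j)$, and from Proposition~\ref{semigroupe} together with the spectral lower bound $\lambda_{q_j}^l\geq\mu$ of Remark~\ref{eq18b} one gets $\|w_j(t)\|_{H}\leq e^{-\mu t}\|u_0-v_j\|_{H}$. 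To upgrade this into a decay in a norm strong enough to control traces of $w_j$, $\nabla w_j$, $\pi_j$, $\nabla \pi_j$ on $\Gamma_e$, I would use analyticity: for $t\geq 1$, $\|w_j(t)\|_{\mathcal D(A_{q_j}^2)}\leq C\|A_{q_j}^2 e^{-tA_{q_j}}(u_0-v_j)\|_H\leq C e^{-\mu t/2}\|u_0-v_j\|_H$ by the standard analytic-semigroup smoothing bound $\|A_{q}^k e^{-tA_q}\|\leq C_k t^{-k}e^{-\mu t/2}$, and then Corollary~\ref{magnifique} embeds $\mathcal D(A_{q_j}^2)\hookrightarrow H^4(\Omega)^2$, which by trace theory controls $w_j$, $\nabla w_j$ on $\Gamma_e$; the associated pressure $\pi_j\in H^3(\Omega)$ is controlled by Proposition~\ref{regstat}. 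A small separate argument handles $0\leq t\leq 1$: there $u_0-v_j\in H^3(\Omega)^2\cap H$ and Corollary~\ref{regularite2} gives $(u_j,p_j)$, hence $(w_j,\pi_j)$, bounded in $L^\infty(0,1;H^3(\Omega)^2)\times L^\infty(0,1;H^2(\Omega))$ with norm controlled by $\alpha,M_1,M_2$ (via~\ref{maj2222} for $v_j$). Combining, $\sup_{t\geq0}\big(\|w_j(t)\|_{L^2(\Gamma_e)^2}+\|\nabla w_j(t)\|_{L^2(\Gamma_e)}+\|\pi_j(t)\|_{L^2(\Gamma_e)}+\|\nabla\pi_j(t)\|_{L^2(\Gamma_e)}\big)\leq C(\alpha,M_1,M_2)$, and in particular is finite and uniform.

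From there the endgame is short. Writing $v_1-v_2=(u_1-u_2)-(w_1-w_2)$ and likewise for the pressures, the triangle inequality gives $B_{\mathrm{stat}}\leq B_{\mathrm{ev}}+C(\alpha,M_1,M_2)$, where $B_{\mathrm{ev}}$ is the quantity inside the logarithm in the statement (after using $\frac{\partial}{\partial n}(u_1-u_2)=(p_1-p_2)n$ on $\Gamma_e$, exactly as at the end of the proof of Theorem~\ref{bebel}). Since $\ln(1/B_{\mathrm{stat}})$ decreases as $B_{\mathrm{stat}}$ grows, and since $B_{\mathrm{ev}}$ is bounded \emph{a priori} by a constant depending only on $\alpha,M_1,M_2$ (again by~\ref{maj2222} and Corollary~\ref{regularite2}), a routine monotonicity argument on the function $x\mapsto C/(\ln(C_1/x))^{1/2}$ — identical to the one concluding the proof of Theorem~\ref{bebel} — lets one replace $B_{\mathrm{stat}}$ by $B_{\mathrm{ev}}$ at the cost of adjusting the constants $C,C_1$, yielding the claimed inequality. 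The main obstacle is the decay step: one must be careful that the domains $\mathcal D(A_{q_j}^k)$ depend on $q_j$, so the constants in the analytic-semigroup bounds and in the embedding of Corollary~\ref{magnifique} must be shown uniform over $\{q\in H^{5/2}(\Gamma_0):q\geq\alpha,\ \|q\|_{H^{5/2}}\leq M_2\}$; this uniformity comes from the uniform coercivity constant $\mu$ of Remark~\ref{eq18b} and from the fact that the constant in Proposition~\ref{regstat} depends on $q$ only through $\alpha$ and $M_2$.
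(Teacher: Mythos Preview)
Your overall architecture is the right one and matches the paper: reduce to Theorem~\ref{bebel} applied to the stationary pairs $(v_j,\zeta_j)$, then compare $(v_j,\zeta_j)$ to $(u_j,p_j)$ via the semigroup decay of $w_j=u_j-v_j$. Your semigroup argument is slightly heavier than the paper's (you go through $\mathcal D(A_{q_j}^2)\hookrightarrow H^4$, while the paper uses $\eta=3/2$ and $H^3\times H^2$ via Proposition~\ref{regstat}), but that part is fine.

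The genuine gap is in your ``endgame''. From the triangle inequality you only extract $B_{\mathrm{stat}}\leq B_{\mathrm{ev}}+C(\alpha,M_1,M_2)$, and then you claim that a monotonicity argument ``identical to the one concluding the proof of Theorem~\ref{bebel}'' lets you absorb the additive constant. This does not work. The monotonicity step in Theorem~\ref{bebel} replaces the \emph{multiplicative} factor $A=\|u\|_{H^3}+\|p\|_{H^3}$ in $A/(\ln(C_1\tilde d^2 A/B))^{1/2}$ by an upper bound; it says nothing about an additive constant in $B$. In fact, an additive constant is fatal: if $B_{\mathrm{stat}}\leq B_{\mathrm{ev}}+C_0$ with $C_0>0$ fixed, then as $B_{\mathrm{ev}}\to 0$ one has
\[
\frac{1}{\big(\ln(C_1/B_{\mathrm{stat}})\big)^{1/2}}\ \longrightarrow\ \frac{1}{\big(\ln(C_1/C_0)\big)^{1/2}}>0,
\]
whereas the claimed bound $C/(\ln(C_1'/B_{\mathrm{ev}}))^{1/2}$ tends to $0$. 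No choice of constants repairs this.

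The fix is already present in your own decay estimate, you just fail to use it. You established that the boundary traces of $(w_j(t),\pi_j(t))$ tend to $0$ as $t\to\infty$ (exponentially, for $t\geq 1$). Therefore, writing $v_1-v_2=(u_1-u_2)(t)-(w_1-w_2)(t)$ for each fixed $t$ and letting $t\to\infty$ gives directly
\[
\|v_1-v_2\|_{L^2(\Gamma_e)^2}\leq \|u_1-u_2\|_{L^\infty(0,\infty;L^2(\Gamma_e)^2)},
\]
and similarly for $\zeta_1-\zeta_2$ and $\partial_n\zeta_1-\partial_n\zeta_2$, so that $B_{\mathrm{stat}}\leq B_{\mathrm{ev}}$ with \emph{no} additive constant. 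Then Theorem~\ref{bebel} concludes immediately, since $x\mapsto 1/(\ln(C_1/x))^{1/2}$ is increasing. This is exactly what the paper does. With this one-line correction, your split into $t\in[0,1]$ and $t\geq 1$ is in fact unnecessary: only the $t\to\infty$ limit is used.
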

\begin{remark}
Due to the method which relies on semigroup theory, we need to take measurements during an infinite time. 
\end{remark}
\begin{proof}[Proof of Theorem~\ref{ok}]
For $j=1,2$, let $(v_j,\zeta_j)$ be the solution of the stationary problem~\ref{eq9} with $q=q_j$. According to Proposition~\ref{regstat},  $(v_j,\zeta_j)$ belongs to  $H^4(\Omega)^2 \times H^3(\Omega)$  and moreover, thanks to assumptions~\ref{eq13} and~\ref{eq14},  there exists a constant $C(\alpha,M_1,M_2)>0$ such that 
\begin{equation} \label{majoration_stat}
\|v_j\|_{H^4(\Omega)^2} + \|\zeta_j\|_{H^3(\Omega) } \leq C(\alpha,M_1,M_2).
\end{equation}
We denote $(w_j,\pi_j)=(u_j-v_j,p_j-\zeta_j)$.
Thanks to Theorem~\ref{bebel}, we are able to estimate $\|q_1-q_2\|_{L^2(K)}$ with respect to an increasing function of $(v_1-v_2)_{|\Gamma_e}$, $\displaystyle (\zeta_1-\zeta_2)_{|\Gamma_e}$  and $\left( \frac{\partial \zeta_1}{\partial n} -\frac{\partial \zeta_2}{\partial n}\right)_{|\Gamma_e}$. Our objective is now to compare the asymptotic behavior of $u_1-u_2$ and $p_1-p_2$  to the solution of the stationary problem $v_1-v_2$ and $\zeta_1-\zeta_2$. More precisely, we are going to prove that:
\begin{equation*}
\| w_j(t,.) \|_{H^3(\Omega)^2} + \| \pi_j(t,.)\|_{H^2(\Omega)} \leq G(t),
\end{equation*}
where $G$ is a function which tends to $0$ when $t$ goes to $+ \infty$. This inequality, combined with Theorem~\ref{bebel}, will allow us to conclude the proof of Theorem~\ref{ok}.
\par
We have that $(w_j,\pi_j)$ is the solution of the following problem: for $t>0$,
\begin{equation*} 
      \left\{
      \begin{array}{ccll}
        \displaystyle     \partial_t w-\Delta w  + \nabla \pi 		&	=	&	0,							& \textrm{ in }  \Omega,\\
        \displaystyle                                  div \textrm{ } w 		&	=	&	0,							& \textrm{ in }   \Omega, \\  
	\displaystyle 		\frac{\partial w}{\partial n}  - \pi n&	=	&	0,									& \textrm{ on }   \Gamma_e, \\
	\displaystyle 	\frac{\partial w}{\partial n}  - \pi n+q_jw  		&	=	&	0,									& \textrm{ on }  \Gamma_0,
     \end{array}
     \right.
\end{equation*}
completed with the initial condition $w(0)=u_0-v_j$.
Let $t >0$.
We have from the theory of analytic semigroup that:
\begin{equation} \label{eq20}
w_j(t,.) = e^{-t A_{q_j}} w_j(0,.). 
\end{equation}
Let $\eta>0$.  There exists a constant $C>0$ independent of $q_j$ such that:  
\begin{equation} \label{eq21}
\|A_{q_j}^{\eta} e^{-tA_{q_j}}\| \leq C \frac{e^{-\mu t}}{ t^{\eta}}\textrm{, } t>0 \textrm{, } \eta > 0,
\end{equation}
where $\mu$ is given by~\ref{eq18} and where $\|$ $\|$ is the norm operator. 
Using regularity result for the stationary problem given in Proposition~\ref{regstat}, we have that: 
\begin{equation*}
\|w_j(t,.)\|_{H^3(\Omega)^2} +\|\pi_j(t,.)\|_{H^2(\Omega)}  \leq C(\alpha,M_2)\|\partial_t w_j(t,.) \|_{H^1(\Omega)^2} .
\end{equation*}
Note that, thanks to Proposition~\ref{regularite} we have:
\begin{equation*}
\| \partial_t w_j(t,.) \|_{H^1(\Omega)^2} = \|A_{q_j} w_j(t,.)\|_{H^1(\Omega)^2}. 
\end{equation*}
Then, since $w_j(t,.)$ is given by~\ref{eq20}, and using Proposition~\ref{isometry} and estimates~\ref{majoration_stat} and~\ref{eq21} with $\eta=\frac{3}{2}$, it follows: 
\begin{equation} \label{eq22}
\begin{split}
\|w_j(t,.)\|_{H^3(\Omega)^2}+\|\pi_j(t,.)\|_{H^2(\Omega)}&  \leq C(\alpha,M_2) \|A_q^{\frac{3}{2}} e^{-t A_{q_j}}w_j(0,.)\|_{L^2(\Omega)^2}	 \\
 & \leq \displaystyle C(\alpha,M_2) \frac{e^{-\mu t}}{ t^{\frac{3}{2}}} \left(\|u_0\|_{L^2(\Omega)^2}+ \|v_j\|_{L^2(\Omega)^2}\right) \\
						&   \leq \displaystyle  C(\alpha,u_0,M_1,M_2) \frac{e^{-\mu t}}{ t^{\frac{3}{2}}} .
\end{split}														
\end{equation}
We have from~\ref{eq22}:
\begin{equation*}
\|v_1-v_2\|_{L^2(\Gamma_e)^2} \leq C(\alpha,u_0,M_1,M_2) \frac{e^{-\mu t}}{ t^{\frac{3}{2}}} + \|u_1-u_2\|_{L^{\infty}(0,+\infty;L^2(\Gamma_e)^2)}.
\end{equation*}
Then, passing to the limit when $t$ goes to infinity, we get:
\begin{equation*}
\|v_1-v_2\|_{L^2(\Gamma_e)^2} \leq  \|u_1-u_2\|_{L^{\infty}(0,+\infty;L^2(\Gamma_e)^2)}.
\end{equation*}
We prove similarly:
\begin{equation*}
\|\zeta_1-\zeta_2\|_{L^2(\Gamma_e)}  \leq \|p_1-p_2\|_{L^{\infty}(0,+\infty;L^2(\Gamma_e))},
\end{equation*}
and
\begin{equation*}
\left \| \frac{\partial \zeta_1}{\partial n} - \frac{\partial \zeta_2}{\partial n} \right \|_{L^2(\Gamma_e)} \leq \left \| \frac{\partial p_1}{\partial n} - \frac{\partial p_2}{\partial n} \right \|_{L^{\infty}(0,+\infty;L^2(\Gamma_e))}.
\end{equation*}
To summarize, we have obtained:
\begin{multline*}
\| v_1-v_2\|_{L^2(\Gamma_e)^2} + \|\zeta_1-\zeta_2\|_{L^2(\Gamma_e)} + \left \| \frac{\partial \zeta_1}{\partial n} - \frac{\partial \zeta_2}{\partial n} \right \|_{L^2(\Gamma_e)} \\
\leq \| u_1-u_2\|_{L^{\infty}(0,+\infty;L^2(\Gamma_e)^2)}+ \|p_1-p_2\|_{L^{\infty}(0,+\infty;L^2(\Gamma_e))} +  \left\| \frac{\partial p_1}{\partial n} - \frac{\partial p_2}{\partial n}\right\|_{L^{\infty}(0,+\infty;L^2(\Gamma_e))}.
\end{multline*} 
Applying Theorem~\ref{bebel} to $(v_j,\zeta_j)$ for $j=1,2$, we obtain the existence of positive constants $C(M_1,M_2,\alpha)$  and  $C_1(M_1,M_2,\alpha)$ such that
\begin{equation*}
\|q_1-q_2\|_{L^2(K)} \leq 
\frac{1}{m}\frac{C(M_1,M_2,\alpha)} { \left(\ln \left( \frac{C_1(M_1,M_2,\alpha)}{\| v_1-v_2\|_{L^2(\Gamma_e)^2} + \|\zeta_1-\zeta_2\|_{L^2(\Gamma_e)} + \left \| \frac{\partial \zeta_1}{\partial n} - \frac{\partial \zeta_2}{\partial n} \right \|_{L^2(\Gamma_e)}  }   \right)\right)^{\frac{1}{2} }}.
\end{equation*}
We conclude by using the fact that the function $\displaystyle  x \to \frac{1}{\ln\left(\frac{1}{x}\right)}$ increases on $\mathbb{R}^*_+$.
\end{proof}
\begin{remark}
Remark that 
\begin{equation} \label{plkhtvs}
(u_j,p_j) \in L^{\infty}(0,+\infty;H^3(\Omega)^2) \times L^{\infty}(0,+\infty;H^2(\Omega)).
\end{equation}
Let us prove~\ref{plkhtvs}. Let $\nu >0$. In fact,  thanks to equation~\ref{eq22}, we obtain that 
\begin{equation*}
(w_j,\pi_j) \in L^{\infty}(\nu,+\infty;H^3(\Omega)^2) \times L^{\infty}(\nu,+\infty;H^2(\Omega)),
\end{equation*}
and since $u_j=w_j+v_j$ and $p_j=\pi_j+\zeta_j$, we deduce that 
\begin{equation*}
(u_j,p_j) \in L^{\infty}(\nu,+\infty;H^3(\Omega)^2) \times L^{\infty}(\nu,+\infty;H^2(\Omega)).
\end{equation*}
Moreover, thanks to Corollary~\ref{regularite2}, we have 
\begin{equation*}
(u_j,p_j) \in L^{\infty}(0,\nu;H^3(\Omega)^2) \times L^{\infty}(0,\nu;H^2(\Omega)).
\end{equation*} 
Thus, \ref{plkhtvs} follows.
\end{remark}
%
%
%
%_-_-_-_-_-_-_-_-_-_-_-_-_-_-_-_-_-_-_-_-_-_-_-_-_-_-_-_
%_-_-_-_-_-_-_-_-_-_-_-_-_-_-_-_-_-_-_-_-_-_-_-_-_-_-_-_
\subsubsection{The flux $g$ depends on $t$}
%_-_-_-_-_-_-_-_-_-_-_-_-_-_-_-_-_-_-_-_-_-_-_-_-_-_-_-_
%_-_-_-_-_-_-_-_-_-_-_-_-_-_-_-_-_-_-_-_-_-_-_-_-_-_-_-_
%
%
We restrict our study to the case where $g$ is colinear to the exterior unit normal $n$: $g=\kappa$ $n$.
\\
Let $\alpha>0$, $M_1>0$ and $M_2>0$. We assume that:
\begin{equation} \label{eq90bis}
\kappa \in H^2_{loc}(0,+\infty;H^{\frac{3}{2}}(\Gamma_e)), 
\end{equation}
and 
\begin{equation} \label{eq91}
q \in  H^{\frac{5}{2}}(\Gamma_0) \textrm{ is such that }  \|q\|_{H^{\frac{5}{2}}(\Gamma_0) } \leq M_2 \textrm{ and } q \geq \alpha \text{ on }  \Gamma_0.
\end{equation}
Let us introduce $h$ such that:
\begin{equation} \label{eq90}
h \in H^{\frac{5}{2}}(\Gamma_e) \textrm{ is non identically zero and } \|h\|_{H^{ \frac{5}{2}}(\Gamma_0) } \leq M_1.
\end{equation}
We assume that: 
\begin{equation} \label{hyp_instationnaire}
\begin{split}
\lim_{t \to \infty} & (\|\kappa(t,.)-h\|_{H^{\frac{3}{2} }(\Gamma_e)  }+ \|\partial_t \kappa(t,.)\|_{H^{\frac{3}{2} }(\Gamma_e)  } \\ &+ \left( \int_0^t e^{- \mu (t-s)}  \|\partial_t \kappa(s,.)\|^2_{H^{\frac{3}{2} }(\Gamma_e)  }ds  \right)^{\frac{1}{2}} )  = 0,
\end{split}
\end{equation}
where $\mu$ is given by equation \ref{eq18}.
\begin{theorem} \label{okok}
Let $\alpha >0$, $M_1> 0$, $M_2>0$ and $u_0 \in H^3(\Omega)^2 \cap H$. We assume that $h$ and $\kappa$ satisfy respectively~\ref{eq90} and~\ref{eq90bis} and for $j=1,2$, $q_j$ satisfies~\ref{eq91}. We denote by $(u_j,p_j)$ the solution of system \ref{eq1} associated to $q=q_j$. Let $K$ be a compact subset of $\{ x \in \Gamma_0 / v_1(x) \not= 0\}$, where  $(v_1,\zeta_1)$ is the solution of
\begin{equation*} 
	\left \{   
		\begin{array}{ccll}
                     \displaystyle   - \Delta v + \nabla \zeta			&	=	&	0,							& \textrm{ in }  \Omega, \\ 
                    \displaystyle                       div \textrm{ } v 		&	=	&	0,							& \textrm{ in }  \Omega, \\  
                    \displaystyle             \frac{\partial v }{\partial n} - \zeta n		&	=	&	h n,	 						& \textrm{ on }  \Gamma_e, \\   
                     \displaystyle            \frac{\partial v }{\partial n} - \zeta n	+ q_1v &	=	&	0,							& \textrm{ on } \Gamma_0,  \\
                   \end{array}     
      \right.
 \end{equation*}
and let $m>0$ be a constant such that $|v_1|>m$ on $K$. We assume that~\ref{hyp_instationnaire} is verified.
Then there exist $C(\alpha,M_1,M_2)>0$ and $C_1(\alpha,M_1,M_2)>0$ such that 
\begin{equation*}
\begin{split}
& \|q_1-q_2\|_{L^2(K)} 
\leq \\ &\frac{1}{m} \frac{C(\alpha,M_1,M_2)}  { \left ( \ln \left ( \frac{C_1(\alpha,M_1,M_2)}{  \| u_1-u_2\|_{L^{\infty}(0,+\infty;L^2(\Gamma_e)^2)} + \|p_1-p_2\|_{L^{\infty}(0,+\infty;L^2(\Gamma_e) )} +\left\| \frac{\partial p_1}{\partial n} -\frac{\partial p_2}{\partial n} \right \|_{L^{\infty}(0,+\infty;L^2(\Gamma_e))}  }  \right ) \right ) ^\frac{1}{2}}.
\end{split}
\end{equation*}
\end{theorem}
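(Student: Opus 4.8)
The plan is to derive Theorem~\ref{okok} from the stationary estimate of Theorem~\ref{bebel} applied to the pair $(v_j,\zeta_j)$, and to control the stationary boundary data by the evolution measurements through a decay-in-time argument. The reduction is legitimate: $(hn,q_j)\in H^{5/2}(\Gamma_e)^2\times H^{5/2}(\Gamma_0)$ with $\|hn\|_{H^{5/2}(\Gamma_e)^2}\le CM_1$ (the outer normal $n$ being a fixed smooth vector field), $\|q_j\|_{H^{5/2}(\Gamma_0)}\le M_2$, $q_j\ge\alpha$, and $hn$ is not identically zero; Theorem~\ref{bebel} then bounds $\|q_1-q_2\|_{L^2(K)}$ by a logarithm of $\|v_1-v_2\|_{L^2(\Gamma_e)^2}+\|\zeta_1-\zeta_2\|_{L^2(\Gamma_e)}+\|\partial_n\zeta_1-\partial_n\zeta_2\|_{L^2(\Gamma_e)}$. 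So it suffices to prove $\|u_j(t,\cdot)-v_j\|_{H^3(\Omega)^2}+\|p_j(t,\cdot)-\zeta_j\|_{H^2(\Omega)}\to0$ as $t\to+\infty$, for $j=1,2$: granting this, taking traces on $\Gamma_e$, inserting $u_j(t,\cdot)$, $p_j(t,\cdot)$ and $\partial_np_j(t,\cdot)$ in the three boundary quantities above and letting $t\to+\infty$ dominates them respectively by $\|u_1-u_2\|_{L^\infty(0,+\infty;L^2(\Gamma_e)^2)}$, $\|p_1-p_2\|_{L^\infty(0,+\infty;L^2(\Gamma_e))}$ and $\|\partial_np_1-\partial_np_2\|_{L^\infty(0,+\infty;L^2(\Gamma_e))}$, and then Theorem~\ref{bebel} together with the monotonicity of $x\mapsto1/\ln(1/x)$ (as at the end of the proof of Theorem~\ref{ok}) gives the claimed inequality. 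Note that Corollary~\ref{regularite2} applies here ($\Omega\in\mathcal C^{3,1}$, $u_0\in H^3(\Omega)^2\cap H$, $g=\kappa n\in H^2_{loc}(0,+\infty;H^{3/2}(\Gamma_e)^2)$, $q_j\in H^{5/2}(\Gamma_0)$ with $q_j\ge\alpha$) and supplies the regularity of $(u_j,p_j)$ used below.

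For the convergence I would introduce, for each $t\ge0$, the solution $(V_j(t),Z_j(t))$ of the stationary system~\ref{eq9} with datum $\kappa(t,\cdot)n$ on $\Gamma_e$ and coefficient $q_j$, and decompose $u_j-v_j=(u_j-V_j(t))+(V_j(t)-v_j)$ and $p_j-\zeta_j=(p_j-Z_j(t))+(Z_j(t)-\zeta_j)$. Since $(V_j(t)-v_j,Z_j(t)-\zeta_j)$ solves~\ref{eq9} with datum $(\kappa(t,\cdot)-h)n$, Proposition~\ref{regstat} with $k=1$ gives $\|V_j(t)-v_j\|_{H^3(\Omega)^2}+\|Z_j(t)-\zeta_j\|_{H^2(\Omega)}\le C(\alpha,M_2)\|\kappa(t,\cdot)-h\|_{H^{3/2}(\Gamma_e)}\to0$ by~\ref{hyp_instationnaire}. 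It remains to handle $w_j:=u_j-V_j(t)$, $\pi_j:=p_j-Z_j(t)$: this pair solves the Stokes evolution system with the homogeneous boundary conditions $\partial_nw_j-\pi_jn=0$ on $\Gamma_e$ and $\partial_nw_j-\pi_jn+q_jw_j=0$ on $\Gamma_0$, with right-hand side $-\partial_tV_j(t)$ in the momentum equation and initial value $u_0-V_j(0)$; moreover each $\partial_sV_j(s)$ is divergence free and, by Proposition~\ref{regstat} with $k=1$, $\|\partial_sV_j(s)\|_{H^3(\Omega)^2}\le C(\alpha,M_2)\|\partial_s\kappa(s,\cdot)\|_{H^{3/2}(\Gamma_e)}$. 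Hence, by Proposition~\ref{semigroupe}, $w_j(t,\cdot)=e^{-tA_{q_j}}(u_0-V_j(0))-\int_0^te^{-(t-s)A_{q_j}}\partial_sV_j(s)\,ds$ in $H$.

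I would then estimate $\|w_j(t,\cdot)\|_{H^3(\Omega)^2}$ through the embedding $\mathcal D(A_{q_j}^{3/2})\hookrightarrow H^3(\Omega)^2$ (uniform in $q_j$, from Corollary~\ref{magnifique} with $k=1,2$ and interpolation) and inequality~\ref{eq21}. The free term is bounded by $Ce^{-\mu t}t^{-3/2}(\|u_0\|_{L^2(\Omega)}+\|\kappa(0,\cdot)\|_{H^{3/2}(\Gamma_e)})\to0$. For the Duhamel integral, split $\int_0^t=\int_0^{t/2}+\int_{t/2}^t$. On $[0,t/2]$, where $t-s\ge t/2$, inequality~\ref{eq21} with exponent $3/2$ yields $\|A_{q_j}^{3/2}e^{-(t-s)A_{q_j}}\partial_sV_j(s)\|_{L^2}\le C(t/2)^{-3/2}e^{-\mu(t-s)}\|\partial_s\kappa(s,\cdot)\|_{H^{3/2}(\Gamma_e)}$, and Cauchy--Schwarz bounds the corresponding part of the integral by $C(t/2)^{-3/2}(\int_0^te^{-\mu(t-s)}\|\partial_s\kappa(s,\cdot)\|_{H^{3/2}(\Gamma_e)}^2\,ds)^{1/2}\to0$ by~\ref{hyp_instationnaire}. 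On $[t/2,t]$ I would use that $\partial_sV_j(s)\in H^3(\Omega)^2\cap V$, hence $\partial_sV_j(s)\in\mathcal D(A_{q_j}^\theta)$ for a fixed $\theta\in(\tfrac12,\tfrac34)$ (below the threshold where the boundary condition enters the fractional domain) with $\|A_{q_j}^\theta\partial_sV_j(s)\|_{L^2}\le C(\alpha,M_2)\|\partial_sV_j(s)\|_{H^{2\theta}}\le C(\alpha,M_2)\|\partial_s\kappa(s,\cdot)\|_{H^{3/2}(\Gamma_e)}$; factoring $A_{q_j}^{3/2}e^{-(t-s)A_{q_j}}=A_{q_j}^{3/2-\theta}e^{-(t-s)A_{q_j}}A_{q_j}^\theta$ and applying~\ref{eq21} with exponent $3/2-\theta<1$, this part is bounded by $C(\sup_{s\ge t/2}\|\partial_s\kappa(s,\cdot)\|_{H^{3/2}(\Gamma_e)})\int_0^{+\infty}\sigma^{-(3/2-\theta)}e^{-\mu\sigma}\,d\sigma\to0$ by~\ref{hyp_instationnaire}. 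Thus $\|w_j(t,\cdot)\|_{H^3(\Omega)^2}\to0$. Finally, taking the divergence of its momentum equation shows $\pi_j(t,\cdot)$ is harmonic with Dirichlet trace $\frac{\partial w_j}{\partial n}\cdot n$ on $\Gamma_e$ and $\frac{\partial w_j}{\partial n}\cdot n+q_jw_j\cdot n$ on $\Gamma_0$; by the trace theorem, Lemma~\ref{mpojsdhhbg} (using $q_j\in H^{5/2}(\Gamma_0)$) and $H^2$-regularity of the Dirichlet Laplacian, $\|\pi_j(t,\cdot)\|_{H^2(\Omega)}\le C(M_2)\|w_j(t,\cdot)\|_{H^3(\Omega)^2}\to0$. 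Together with the estimate on $V_j(t)-v_j$ and $Z_j(t)-\zeta_j$, this establishes the required convergence and completes the proof.

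The main obstacle is the $\int_{t/2}^t$ part of the Duhamel term: the $H^3$-smoothing of the analytic semigroup produces a singularity $(t-s)^{-3/2}$ that is not integrable at $s=t$, so one must exploit the extra $H^3$-regularity of the source $\partial_sV_j(s)$ — equivalently $\partial_sV_j(s)\in\mathcal D(A_{q_j}^\theta)$ with $\theta>\tfrac12$ — to bring the singular exponent below $1$. This is exactly why the hypotheses require $\kappa\in H^2_{loc}(0,+\infty;H^{3/2}(\Gamma_e))$ and the three-term decay condition~\ref{hyp_instationnaire}, the exponentially weighted $L^2$-in-time norm being there to absorb the non-decaying part of $\partial_s\kappa$ over $[0,t/2]$.
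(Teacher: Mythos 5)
Your overall strategy coincides with the paper's: reduce to the stationary estimate of Theorem~\ref{bebel} by showing $\|u_j(t,\cdot)-v_j\|_{H^3(\Omega)^2}+\|p_j(t,\cdot)-\zeta_j\|_{H^2(\Omega)}\to 0$, decompose the evolution solution through the time-frozen stationary solutions (your $V_j(t)$ is exactly the paper's $v_j+y_j(t)$, and your $w_j$ bundles together the paper's $\tilde w_j$ and $\overline w_j$), and use the Duhamel formula for $-A_{q_j}$ together with the embedding $\mathcal{D}(A_{q_j}^{3/2})\hookrightarrow H^3(\Omega)^2$. The free term, the $[0,t/2]$ part of the integral, the pressure recovery and the final passage to the limit are all fine.

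The one genuine gap is the treatment of the near part $\int_{t/2}^t$ of the Duhamel integral. You resolve the non-integrable singularity $(t-s)^{-3/2}$ by asserting that $\partial_s V_j(s)\in H^{3}(\Omega)^2\cap V$ implies $\partial_s V_j(s)\in\mathcal{D}(A_{q_j}^{\theta})$ for some fixed $\theta\in(\tfrac12,\tfrac34)$, with $\|A_{q_j}^{\theta}\partial_sV_j(s)\|_{L^2(\Omega)^2}\le C(\alpha,M_2)\|\partial_s\kappa(s,\cdot)\|_{H^{3/2}(\Gamma_e)}$ uniformly in $q_j$. This is the Grisvard--Seeley type identification $H^{2\theta}(\Omega)^2\cap V\hookrightarrow \mathcal{D}(A_{q_j}^{\theta})$ for $\theta<\tfrac34$, which is plausible but is nowhere established in the paper (Proposition~\ref{isometry} and Corollary~\ref{magnifique} only cover half-integer powers, and $\partial_sV_j(s)$ does \emph{not} satisfy the boundary conditions defining $\mathcal{D}(A_{q_j})$, so no interpolation between $\mathcal{D}(A_{q_j}^{1/2})$ and $\mathcal{D}(A_{q_j})$ is available for it). For the Stokes operator with mixed Neumann/Robin conditions this identification, with constants uniform over $\{q:\|q\|_{H^{5/2}(\Gamma_0)}\le M_2,\ q\ge\alpha\}$, would require a separate proof. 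The paper avoids the issue entirely: since $g=\kappa n$ is colinear to the normal, one can shift the pressure by the harmonic extension $\tilde p$ of $\kappa-h$ (system~\ref{merde}) and deduce $A_{q_j}y_j(t)=\nabla\tilde p(t)\in V=\mathcal{D}(A_{q_j}^{1/2})$, hence $\partial_t y_j(s)\in\mathcal{D}(A_{q_j}^{3/2})$ with the bound~\ref{eq300ter}; then $A_{q_j}^{3/2}$ lands directly on the source and no singular smoothing factor ever appears. Your proof becomes complete if you replace the fractional-domain claim by this pressure-shift argument (which is also where the hypothesis that $g$ is colinear to $n$ is actually used); alternatively you would have to prove the $\theta<\tfrac34$ domain identification as an additional lemma.
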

%------------------
%
%
\begin{remark}
Let $l \in H^2_{loc}(0,+\infty ; H^{\frac{3}{2}}(\Gamma_e))$  and $h \in H^{\frac{3}{2}}(\Gamma_e)$. Assume that there exists $\theta > 0$ such that: 
\begin{equation*}
\sup_{t \geq 0} e^{t \theta} \left( \| l(t,.)\|_{H^{\frac{3}{2}}(\Gamma_e) } + \| \partial_t l(t,.)\|_{H^{\frac{3}{2}}(\Gamma_e)} \right) < + \infty,
\end{equation*}
Then $\kappa =h+l$ satisfies~\ref{hyp_instationnaire}. We note that a particular case of function satisfying~\ref{hyp_instationnaire} is given by $l(t,x)=\omega(t)\rho(x)$ where $\omega \in H^2_{loc}(0,+\infty)$  , $\rho \in H^{\frac{3}{2}}(\Gamma_e) $ and $\lim_{t \to \infty} e^{t \theta} \omega(t)=\lim_{t \to \infty} e^{t \theta} \omega'(t)=0 $.
\end{remark}
\begin{proof}[Proof of Theorem~\ref{okok}]
For $j=1,2$, we decompose $u_j$ into $u_j=v_j+w_j$ where $(v_j,\zeta_j) \in H^4(\Omega)^2 \times H^3(\Omega)$ is the solution of the stationary problem: 
\begin{equation*} 
	\left \{   
		\begin{array}{ccll}
       \displaystyle                 - \Delta v + \nabla \zeta			&	=	&	0,							& \textrm{ in }  \Omega, \\ 
       \displaystyle                                    div \textrm{ } v 		&	=	&	0,							& \textrm{ in }  \Omega, \\  
        \displaystyle                         \frac{\partial v }{\partial n} - \zeta n		&	=	&	h n,							& \textrm{ on }  \Gamma_e, \\   
        \displaystyle                         \frac{\partial v }{\partial n} - \zeta n	+ q_jv &	=	&	0,							& \textrm{ on } \Gamma_0, \\
                   \end{array}     
      \right.
 \end{equation*}
and $(w_j, \pi_j)$ is solution of the following problem: 
\begin{equation*}
      \left\{
      \begin{array}{ccll}
       \displaystyle      \partial_t w-\Delta w  + \nabla \pi 		&	=	&	0,							& \textrm{ in }(0,+\infty)\times  \Omega, \\
       \displaystyle                                   div \textrm{ } w 		&	=	&	0,							& \textrm{ in }   (0,+\infty)\times\Omega,\\  
	\displaystyle 		\frac{\partial w}{\partial n}  - \pi n		&	=	&	(\kappa-h)n,					& \textrm{ on }  (0,+\infty)\times\Gamma_e,\\
	\displaystyle 	\frac{\partial w}{\partial n}  - \pi n+q_j w  		&	=	&	0,							& \textrm{ on } (0,+\infty)\times\Gamma_0,\\
	\displaystyle     w(0,x)                       				&	=	& u_0(x)-v_j(x), 					& \textrm{ in } \Omega.
     \end{array}
     \right.
\end{equation*}
We would like to perform the same reasoning as in Theorem~\ref{ok}.  More precisely, we are going to prove that:
\begin{equation*}
\| w_j(t,.)\|_{H^3(\Omega)^2} + \|\pi_j(t,.)\|_{H^2(\Omega)} \leq G(t),
\end{equation*}
where $G$ is a function which tends to $0$ when $t$ goes to $+ \infty$.  Since the function $\kappa$ depends on $t$, there will be one more step than in Theorem~\ref{ok} and that is why we assume~\ref{hyp_instationnaire}.  
\par
We divide $(w_j,\pi_j)$ into two terms:  $w_j= u^0_j +\tilde{w}_j$ and $\pi_j=p_j^0+\tilde{\pi}_j$, where $(u^0_j,p^0_j)$ is solution of
\begin{equation*}
      \left\{
      \begin{array}{ccll}
       \displaystyle      \partial_t u^0-\Delta u^0  + \nabla p^0 		&	=	&	0,							& \textrm{ in }  (0,+\infty)\times\Omega,  \\
      \displaystyle                                     div \textrm{ } u^0 			&	=	&	0,							& \textrm{ in }   (0,+\infty)\times\Omega, \\  
	\displaystyle 		\frac{\partial u^0 }{\partial n}- p^0 n			&	=	&	(\kappa-h)n,					& \textrm{ on }   (0,+\infty)\times\Gamma_e, \\
	\displaystyle 	\frac{\partial u^0 }{\partial n}- p^0 n+q_ju^0  		&	=	&	0,							& \textrm{ on }  (0,+\infty)\times\Gamma_0,  \\
	\displaystyle     u^0(0,x)                       					&	=	& 0,								& \textrm{ in } \Omega,
	     \end{array}
     \right.
\end{equation*}
and $(\tilde{w}_j,\tilde{\pi}_j)$ is solution of 
\begin{equation*}
      \left\{
      \begin{array}{ccll}
      \displaystyle       \partial_t \tilde{w}-\Delta \tilde{w}  + \nabla \tilde{\pi} 		&	=	&	0,							& \textrm{ in } (0,+\infty)\times\Omega, \\
      \displaystyle                                   div \textrm{ } \tilde{w} 				&	=	&	0,							& \textrm{ in }(0,+\infty)\times \Omega, \\  
	\displaystyle 		\frac{\partial \tilde{w} }{\partial n} - \tilde{\pi} n			&	=	&	0,							& \textrm{ on  }  (0,+\infty)\times \Gamma_e, \\
	\displaystyle 	\frac{\partial \tilde{w} }{\partial n}- \tilde{\pi} n+q_j\tilde{w}  		&	=	&	0,							& \textrm{ on }(0,+\infty)\times  \Gamma_0, \\
	\displaystyle     \tilde{w}(0,x)                       						&	=	& u_0(x) - v_j(x),			& \textrm{ in } \Omega.
	     \end{array}
     \right.
\end{equation*}
Let $t>0$. Using the same arguments as in the previous subsection, we prove that there exists $C(\alpha,u_0,M_1,M_2)>0$ such that:
\begin{equation} \label{sale1}
\|\tilde{w}_j(t,.)\|_{H^3(\Omega)^2}+\|\tilde{\pi}_j(t,.)\|_{H^2(\Omega)}   \leq C(\alpha,u_0,M_1,M_2) \frac{e^{-\mu t}}{ t^{\frac{3}{2}} }.
\end{equation}

It remains for us to bound $\|u^0_j(t,.)\|_{H^3(\Omega)^2}$ and $\| p^0_j(t,.)\|_{H^2(\Omega)}$. We are going to prove that there exists a constant $C(\alpha,M_2)>0$ such that: 
\begin{equation} \label{tata1}
\begin{split}
& \| u^0_j(t,.)\|_{H^3(\Omega)^2} + \|p^0_j(t,.)\|_{H^2(\Omega)}  \\
 \leq  & C(\alpha,M_2) \left( \int_0^t e^{- \mu (t-s)}  \|\partial_t \kappa(s,.) \|^2_{H^{\frac{3}{2} }(\Gamma_e)  } ds \right)^{\frac{1}{2}} + C(\alpha,M_2) \|\partial_t\kappa(t,.) \|_{H^{\frac{3}{2} }(\Gamma_e)  } 
\\ &+C(\alpha,M_2)\left( \|\kappa(t,.)-h \|_{H^{\frac{3}{2} }(\Gamma_e)  } + e^{- \mu t } \|\kappa(0,.)-h\|_{H^{\frac{3}{2} }(\Gamma_e)  }  \right). \\
\end{split}
\end{equation}
If inequality~\ref{tata1} is satisfied, we can end the proof of Theorem~\ref{okok}:
\begin{equation*}
\|w_1(t,.)-w_2(t,.)\|_{H^3(\Omega)^2} \leq \| u^0_1(t,.)-u^0_2(t,.)\|_{H^3(\Omega)^2}+ \| \tilde{w}_1(t,.)-\tilde{w}_2(t,.)\|_{H^3(\Omega)^2},
\end{equation*}
\begin{equation*}
\| \pi_1(t,.)- \pi_2(t,.)\|_{H^2(\Omega)} \leq \|  p^0_1(t,.)- p^0_2(t,.)\|_{H^2(\Omega)}+ \| \tilde{p}_1(t,.)- \tilde{p}_2(t,.)\|_{H^2(\Omega)},
\end{equation*}
and in the following two estimates, the right hand side tends to $0$ when $t$ goes to infinity thanks to inequalities~\ref{sale1} and assumption~\ref{hyp_instationnaire}.
\par
We introduce $(y_j,\rho_j)$ the solution of
\begin{equation*}
      \left\{
      \begin{array}{ccll}
    \displaystyle  -\Delta y + \nabla \rho 							&	=	&	0,							& \textrm{ in }   \Omega, \\
    \displaystyle                                      div \textrm{ } y				&	=	&	0,							& \textrm{ in }  \Omega,  \\  
 \displaystyle			 \frac{\partial y}{\partial n} - \rho n				&	=	&	(\kappa-h) n,					& \textrm{ on }  \Gamma_e, \\
 \displaystyle		 \frac{\partial y}{\partial n} - \rho n+q_j y  				&	=	&	0,							& \textrm{ on } \Gamma_0,
	     \end{array}
     \right.
\end{equation*}
for all $t>0$.
We know that $ (y_j(t,.),\rho_j(t,.)) \in H^3(\Omega)^2 \times H^2(\Omega)$ and satisfies, thanks to Proposition~\ref{regstat}: 
\begin{equation} \label{te}
\| y_j(t,.)\|_{H^3(\Omega)^2} +\|\rho_j(t,.)\|_{H^2(\Omega)}\leq C(\alpha,M_2) \|\kappa(t,.)-h\|_{H^{\frac{3}{2}} (\Gamma_e)}.
\end{equation}
Remark that  $y_j(t,.)$ belongs to $\mathcal{D}(A_{q_j}^{\frac{3}{2}})$. Indeed, there exists a unique $\tilde{p}(t,.) \in H^3(\Omega)$ solution of
\begin{equation} \label{merde}
      \left\{
      \begin{array}{ccll}
     \displaystyle        \Delta \tilde{p}  	&	=	&	0,							& \textrm{ in  }  \Omega,  \\
	 \displaystyle		\tilde{p}	&	=	&	 \kappa-h,						& \textrm{ on } \Gamma_e,  \\
	 \displaystyle		\tilde{p} 	&	=	&	0,							& \textrm{ on }  \Gamma_0 , \\
  \end{array}
     \right.
\end{equation}
 for all $t>0$ and there exists a constant $C>0$ such that 
\begin{equation} \label{eq28}
\|\tilde{p}(t,.)\|_{H^3(\Omega)} \leq C \|\kappa(t,.)-h\|_{H^{\frac{3}{2}}(\Gamma_e)}.
\end{equation}
Then $(y_j,\rho_j+\tilde{p})$ satisfies
\begin{equation*} 
	\left \{   
		\begin{array}{ccll}
         \displaystyle               - \Delta y_j + \nabla (\rho_j+\tilde{p})			&	=	&	\nabla \tilde{p},			& \textrm{ in } \Omega,\\ 
          \displaystyle                                div \textrm{ } y_j 				&	=	&	0,					& \textrm{ in }  \Omega, \\  
           \displaystyle                       \frac{\partial y_j}{\partial n} -( \rho_j+\tilde{p}) n		&	=	&	0, 					& \textrm{ on }  \Gamma_e, \\   
             \displaystyle                     \frac{\partial y_j}{\partial n} - (\rho_j+\tilde{p}) n	+ q_jy &	=	&	0,					& \textrm{ on }  \Gamma_0,  \\
                   \end{array}     
      \right.
 \end{equation*}
  for all $t>0$.
Remark that, since $\displaystyle \nabla \tilde{p} \in L^2(\Omega)$, we have that $\displaystyle y_j(t) \in \mathcal{D}(A_{q_j})$ by definition of $\mathcal{D}(A_{q_j})$. Notice that the fact that   $g$ is colinear to $n$ is important here to do the \textit{change of variable} in the pressure. We deduce from $\displaystyle A_{q_j} y_j(t)=\nabla \tilde{p}(t) \in V= \mathcal{D}(A_{q_j}^{\frac{1}{2}})$ that $\displaystyle y_j(t) \in \mathcal{D}(A_{q_j}^{\frac{3}{2}})$. Moreover, using Proposition~\ref{isometry} and inequality~\ref{eq28}, there exists a constant $C(M_2)>0$ such that: 
\begin{equation} \label{eq290}
\begin{split}
\| A_{q_j}^{\frac{3}{2}} y_j(t,.)\|_{L^2(\Omega)^2} \leq  &C(M_2) \|A_{q_j} y_j(t,.)\|_{H^1(\Omega)^2} = C(M_2) \|\nabla \tilde{p}(t)\|_{H^1(\Omega)^2}  \\ \leq &C(M_2) \| \kappa(t,.)-h\|_{ H^{\frac{3}{2}}(\Gamma_e)},
\end{split}
\end{equation}
that is to say:
\begin{equation} \label{eq300}
\|  y_j(t,.)\|_{\mathcal{D}(A_{q_j}^{\frac{3}{2}}) } \leq C(\alpha,M_2) \|\kappa(t,.)-h\|_{ H^{\frac{3}{2}}(\Gamma_e)}.
\end{equation}
We can use the same argument, replacing $\kappa-h$ by $\partial_t \kappa$, to prove that $\partial_t y_j(t,.) \in \mathcal{D}(A_{q_j}^{\frac{3}{2}})$ together with the estimate
\begin{equation} \label{eq300ter}
\| \partial_t y_j(t,.)\|_{\mathcal{D}(A_{q_j}^{\frac{3}{2}}) } \leq C(\alpha,M_2)  \| \partial_t \kappa(t,.)\|_{ H^{\frac{3}{2}}(\Gamma_e)}.
\end{equation}
Let us consider $\overline{w}_j=u^0_j - y_j$ and $\overline{p}_j= p^0_j- \rho_j$. The couple $(\overline{w}_j,\overline{p}_j)$ is solution of  
\begin{equation} \label{sale3}
      \left\{
      \begin{array}{ccll}
  \displaystyle          \partial_t \overline{w}-\Delta \overline{w}  + \nabla \overline{p}		&	=	&	-\partial_t y_j,					& \textrm{ in } (0,+\infty)\times \Omega,  \\
  \displaystyle                                       div \textrm{ } \overline{w} 		&	=	&	0,							& \textrm{ in }(0,+\infty)\times \Omega, \\  
\displaystyle			\frac{\partial \overline{w}}{\partial n}  - \overline{p} n			&	=	&	0,							& \textrm{ on }(0,+\infty)\times\Gamma_e,\\
\displaystyle		\frac{\partial \overline{w}}{\partial n}  - \overline{p} n+q_j\overline{w}  		&	=	&	0,							& \textrm{ on }  (0,+\infty)\times\Gamma_0,\\
\displaystyle	    \overline{w}(0,x)                       				&	=	& -y_j(0,x), 						&  \textrm{ in }  \Omega.	
     \end{array} 
     \right.
\end{equation}
We know that $\overline{w}_j$ is given by:
\begin{equation*} 
\overline{w}_j(t,.)= -e^{-t A_{q_j} } y_j(0,.) - \int_0^t e^{-(t -s)A_{q_j} } \partial_t y_j(s,.)ds.
\end{equation*}
Using the family $(\phi^l_{q_j})_{l \geq 1}$ defined by Proposition~\ref{valeur_propre}, we have: $\overline{w}_j(t,.)= \sum_{ l \geq 1}C_l(t)\phi_{q_j}^l$, with
\begin{equation*}
C_l(t)= -e^{- t \lambda_{q_j}^l} (y_j(0,.),\phi_{q_j}^l )_{L^2(\Omega)^2} - \int_0^t e^{-(t-s)\lambda_{q_j}^l} (\partial_t y_j(s,.) , \phi_{q_j}^l)_{L^2(\Omega)^2}ds.
\end{equation*}
Thus, recalling that $(\lambda_{q_j}^l)_{l \geq 1}$ satisfies \ref{eq18} and using Cauchy-Schwarz inequality, there exists $C>0$ such that:
\begin{equation*}
C_l(t)^2 \leq 2 e^{- 2 t \mu}  {(y_j(0,.),\phi_{q_j}^l )}^2_{L^2(\Omega)^2} + C \int_0^t e^{-(t-s)\mu} {(\partial_t y_j(s,.) , \phi_{q_j}^l)}^2 _{L^2(\Omega)^2} ds.
\end{equation*}
We obtain from estimates~\ref{eq300} and~\ref{eq300ter}:
\begin{equation} \label{sale4}
\begin{split}
\|\overline{w}_j(t,.)\|_{\mathcal{D}(A_{q_j}^\frac{3}{2})}  \leq &C(\alpha,M_2)  e^{- \mu t } \|\kappa(0,.)-h\|_{H^{\frac{3}{2}}(\Gamma_e)} \\
&  +C(\alpha,M_2) \left( \int_0^t e^{- \mu (t-s)}  \|\partial_t \kappa(s,.)\|^2_{H^{\frac{3}{2}}(\Gamma_e)  }ds  \right)^{\frac{1}{2}}.
 \end{split}
\end{equation}
Remark that, thanks to Proposition~\ref{regularite} and Proposition~\ref{isometry}, we have:
\begin{equation} \label{sale5}
\begin{split}
\|\overline{w}_j(t,.)\|_{H^3(\Omega)^2} & \leq C(\alpha,M_2) \|A \overline{w}_j(t,.)\|_{H^1(\Omega)^2}  \\ &\leq C(\alpha,M_2) \|A^{\frac{3}{2}} \overline{w}_j(t,.)\|_{L^2(\Omega)^2} \leq C(\alpha,M_2) \|\overline{w}_j(t,.)\|_{\mathcal{D}(A_{q_j}^{\frac{3}{2}})} .
\end{split}
\end{equation}
To summarize, using~\ref{sale5} and~\ref{sale4}, we obtain the estimate:
\begin{equation} \label{toutoutout}
\begin{split}
\|\overline{w}_j(t,.)\|_{H^3(\Omega)^2} 
 \leq &  C(\alpha,M_2) e^{- \mu t } \|\kappa(0,.)-h\|_{H^{\frac{3}{2}}(\Gamma_e)} \\  & + C(\alpha,M_2)\left( \int_0^t e^{- \mu (t-s)}  \|\partial_t \kappa(s,.)\|^2_{H^{\frac{3}{2}}(\Gamma_e)  }ds  \right)^{\frac{1}{2}}.
 \end{split}
\end{equation}
Using now the regularity result for the stationary problem given in Proposition~\ref{regstat}, we have: 
\begin{equation*}
\| \overline{p}_j(t,.)\|_{H^2(\Omega)} \leq C(\alpha,M_2) \left (\|\partial_t y_j(t,.)\|_{H^1(\Omega)^2} + \|\partial_t \overline{w}_j (t,.)\|_{H^1(\Omega)^2} \right).
\end{equation*}
Since $A_{q_j} \overline{w}_j = - \partial_t y_j - \partial_t \overline{w}_j$, we obtain: 
\begin{equation*}
\|\overline{p}_j(t,.)\|_{H^2(\Omega)} \leq C(\alpha,M_2) \left (\|\partial_t y_j(t,.)\|_{H^1(\Omega)^2} + \|A_{q_j} \overline{w}_j (t,.)\|_{H^1(\Omega)^2} \right).
\end{equation*}
Thanks to Proposition~\ref{isometry}, we know that 
\begin{equation*}\|A_{q_j} \overline{w}_j (t,.)\|_{H^1(\Omega)^2} \leq C(\alpha) \|A^{\frac{3}{2}}_{q_j} \overline{w}_j (t,.)\|_{L^2(\Omega)^2}.
\end{equation*} 
Therefore, using~\ref{eq300ter} and~\ref{sale4}, we obtain:
\begin{equation} \label{endendend}
\begin{split}
\| \overline{p}_j(t,.)\|_{H^2(\Omega)} \leq &C(\alpha,M_2) \left(e^{-\mu t}\|\kappa(0,.)-h\|_{H^{\frac{3}{2}}(\Gamma_e)}+\|\partial_t\kappa(t,.)\|_{H^{\frac{3}{2}}(\Gamma_e)} \right) \\ 
&+  C(\alpha,M_2)\left( \int_0^t e^{- \mu (t-s)}  \|\partial_t \kappa(s,.)\|^2_{H^{\frac{3}{2}}(\Gamma_e)  }ds  \right)^{\frac{1}{2}}.
\end{split}
\end{equation}
The estimate~\ref{tata1} follows from $u^0_j= \overline{w}_j + y_j$, $p^0_j= \overline{w}_j+ \rho_j$ and inequalities~\ref{te}, ~\ref{toutoutout} and~\ref{endendend}. 
%t
\end{proof}
%
%------------------------------------------------------------------------------------------
%------------------------------------------------------------------------------------------
%
%
\subsection{Conclusion}
\label{conclusion}
To conclude, we have proved, under some regularity assumptions on the open set $\Omega$  and on the solution $(u,p)$ of system~\ref{eq1}, logarithmic stability estimates for the Stokes system with mixed Neumann and Robin boundary conditions. Due to the method which relies on a global Carleman inequality proved in \cite{bukhgeim}, these estimates are valid in dimension 2. 

Our result which, as far as we know, is the first result of this type for Stokes system, could be improved in different ways. A first concern could be to prove a logarithmic stability estimate which is valid in any dimension. This will be the subject of a forthcoming work.
Next, as mentioned in Remark \ref{particular_case}, Robin coefficients are estimated on a compact subset $K \subset \Gamma_0$ which is not a fixed inner portion of $\Gamma_0$ but is unknown and depends on a given reference solution. To obtain an estimate of Robin coefficients on the whole set $\Gamma_0$ or on any compact subset $K\subset \Gamma_0$ is still an open question. 
%To obtain this, one could exibit special data such that a lower bound on $u$, for any $u$ solution of the Stokes system~\ref{eq9}, holds. Let $m>0$. One could also estimate the measure of $\Gamma_0 \backslash A(m)$, where $A(m)=\{ x \in \Gamma_0 / |u(x)| \geq m\}$, uniformly in $u$ solution of the Stokes system~\ref{eq9}.
Finally, in our stability estimates, we need measurements on $\Gamma_e$ of $u$, $p$ and $\displaystyle \frac{\partial p}{\partial n}$, while the identifiability result given by Proposition \ref{identifiabilite} only requires information on  $u$ and $\displaystyle \frac{\partial u}{\partial n}-p n$ on $\Gamma$, where $\Gamma \subseteq \Gamma_e$ is a non-empty open subset of the boundary.
Therefore, it might be interesting to know whether it is possible to obtain a stability inequality with less measurement terms.

%----------------------
%----------------------
%
%----------------------
%----------------------
%
\clearpage
\appendix
\label{section:appendix}
\section{Existence and uniqueness for the unsteady problem }
We study the regularity of the solution of the unsteady problem:
\begin{equation*} 
	\left \{   
		\begin{array}{ccll}
          \displaystyle             \partial_t u - \Delta u + \nabla p			&	=	&	0,							& \textrm{ in } (0,T) \times \Omega, \\ 
           \displaystyle                               div \textrm{ } u 		&	=	&	0,							& \textrm{ in } (0,T) \times \Omega, \\  
          \displaystyle                     \frac{\partial u }{\partial n} - pn		&	=	&	g ,							& \textrm{ on } (0,T) \times \Gamma_e, \\   
          \displaystyle                     \frac{\partial u }{\partial n} - pn	+ qu &	=	&	0,							& \textrm{ on } (0,T) \times \Gamma_0,  \\
          \displaystyle                     u(0,\cdot)					&	=	&	u_0		,				& \textrm{ in } \Omega. 
                   \end{array}     
         \right. 
\end{equation*}
where $q$ only depends on the space variable. We are going to prove Theorem~\ref{reg2}. First of all, as a preliminary result, we prove the following existence result:
\begin{proposition} \label{reg}
Let $T > 0$,  $\alpha >0$ and  $u_0 \in H$. We assume that $g \in L^2(0,T;L^2(\Gamma_e)^d)$ and that $q \in  L^{\infty}(\Gamma_0)$ is such that $q \geq \alpha$ on $ \Gamma_0$. 
There exists $u \in L^2(0,T;V)$ such that for all $v \in V$, we have in the distribution sense on $(0,T)$:
\begin{equation} \label{mem18}
\frac{d}{dt} \int_{\Omega} u \cdot v + \int_{\Omega } \nabla u: \nabla v + \int_{\Gamma_0} q u \cdot v    =  \int_{\Gamma_e}g \cdot v,
\end{equation}
and for all $ v \in V$,
\begin{equation} \label{cond_ini}
\int_{\Omega} u(0) \cdot v = \int_{\Omega} u_0 \cdot v.
\end{equation}
\end{proposition}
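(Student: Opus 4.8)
The plan is to use the Faedo--Galerkin method, exactly as announced in the statement, since~\ref{mem18} is linear and involves no time derivative of the data.

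First I would set up the approximation. As $V$ is a separable Hilbert space, fix a sequence $(w_j)_{j\geq 1}$ in $V$ that is free and total in $V$, put $V_n=\mathrm{span}(w_1,\dots,w_n)$, and let $P_n\colon H\to V_n$ be the $L^2(\Omega)^d$-orthogonal projection, so that $P_nu_0\to u_0$ in $H$ and $\|P_nu_0\|_{L^2(\Omega)^d}\leq\|u_0\|_{L^2(\Omega)^d}$. I would then look for $u_n(t)=\sum_{j=1}^n g_j^n(t)\,w_j$ solving, for $j=1,\dots,n$ and a.e.\ $t\in(0,T)$,
\[
\frac{d}{dt}\int_\Omega u_n\cdot w_j+\int_\Omega\nabla u_n:\nabla w_j+\int_{\Gamma_0}q\,u_n\cdot w_j=\int_{\Gamma_e}g\cdot w_j,\qquad u_n(0)=P_nu_0.
\]
Since the Gram matrix $\bigl(\int_\Omega w_i\cdot w_j\bigr)_{1\leq i,j\leq n}$ is invertible and $g\in L^2(0,T;L^2(\Gamma_e)^d)$, this is a linear first-order ODE system with $L^2$-in-time right-hand side, hence by Carath\'eodory's theorem it has a unique absolutely continuous solution $g^n=(g_1^n,\dots,g_n^n)$ on $[0,T]$.

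Next I would derive the a priori estimate. Multiplying the $j$-th equation by $g_j^n(t)$, summing over $j$, and using the symmetry of the Gram matrix and the definition~\ref{eq5} of $a_q$, one gets
\[
\tfrac12\frac{d}{dt}\|u_n(t)\|_{L^2(\Omega)^d}^2+a_q\bigl(u_n(t),u_n(t)\bigr)=\int_{\Gamma_e}g(t)\cdot u_n(t).
\]
Because $q\geq\alpha>0$, the coercivity of $a_q$ on $V$ already used in Proposition~\ref{existence1} (generalized Poincar\'e inequality) yields $a_q(u_n,u_n)\geq c(\alpha)\,\|u_n\|_{H^1(\Omega)^d}^2$, while the trace theorem and Young's inequality give $\bigl|\int_{\Gamma_e}g\cdot u_n\bigr|\leq \tfrac{c(\alpha)}{2}\|u_n\|_{H^1(\Omega)^d}^2+C(\alpha)\|g\|_{L^2(\Gamma_e)^d}^2$. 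Hence
\[
\frac{d}{dt}\|u_n(t)\|_{L^2(\Omega)^d}^2+c(\alpha)\,\|u_n(t)\|_{H^1(\Omega)^d}^2\leq 2C(\alpha)\,\|g(t)\|_{L^2(\Gamma_e)^d}^2,
\]
and integrating in time, together with $\|u_n(0)\|_{L^2(\Omega)^d}\leq\|u_0\|_{L^2(\Omega)^d}$, shows that $(u_n)$ is bounded in $L^\infty(0,T;H)\cap L^2(0,T;V)$ uniformly in $n$.

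Finally I would pass to the limit. Extract a subsequence (not relabeled) with $u_n\rightharpoonup u$ weakly in $L^2(0,T;V)$ and weakly-$*$ in $L^\infty(0,T;H)$. Fixing $j$ and $\varphi\in\mathcal{C}^1([0,T])$ with $\varphi(T)=0$, multiplying the $j$-th Galerkin equation by $\varphi$ and integrating by parts in time, every term passes to the limit: the two volume terms by weak convergence in $L^2(0,T;V)$; the Robin term because the trace map $V\to L^2(\Gamma_0)^d$ is bounded, hence weakly continuous, and $q\,w_j|_{\Gamma_0}\in L^2(\Gamma_0)^d$; the data term is linear; and the boundary term $\varphi(0)\int_\Omega P_nu_0\cdot w_j\to\varphi(0)\int_\Omega u_0\cdot w_j$ since $P_nu_0\to u_0$ in $H$. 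Choosing first $\varphi\in\mathcal D(0,T)$ gives~\ref{mem18} tested against $w_j$; since the $w_j$ are total in $V$ and all the remaining terms of~\ref{mem18} are in $L^2(0,T)$ (or $L^1(0,T)$), the identity extends to every $v\in V$, and $t\mapsto\int_\Omega u(t)\cdot v$ is then absolutely continuous on $[0,T]$. Choosing next $\varphi$ with $\varphi(0)=1$, $\varphi(T)=0$ and comparing with~\ref{mem18} integrated against $\varphi$ identifies $\int_\Omega u(0)\cdot w_j=\int_\Omega u_0\cdot w_j$ for all $j$, whence~\ref{cond_ini} by density.

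The computations above are routine once the basis is fixed; the only points that genuinely require care are the use of the positivity $q\geq\alpha>0$ (equivalently the coercivity of $a_q$) to close the energy estimate, and the weak continuity of the trace operator to pass to the limit in $\int_{\Gamma_0}q\,u_n\cdot w_j$. No compactness in time is needed because the problem is linear and $q$ only multiplies the fixed test function. (Although not asserted in the statement, uniqueness follows at once by applying the same energy estimate, and Gronwall's lemma, to the difference of two solutions.)
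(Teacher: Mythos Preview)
Your proposal is correct and follows essentially the same route as the paper: Galerkin approximation in finite-dimensional subspaces of $V$, the energy estimate obtained by testing with $u_n$ and using the coercivity of $a_q$ coming from $q\geq\alpha>0$, extraction of a weak limit in $L^2(0,T;V)$, passage to the limit against test functions $w_j\,\varphi(t)$ with $\varphi(T)=0$, and identification of the initial datum by comparing the integrated-by-parts formulation with~\ref{mem18}. The only cosmetic differences are that the paper takes $(w_i)$ to be a Hilbert basis of $V$ orthogonal in $H$ (so that $u_n(0)$ is automatically the $H$-projection of $u_0$), and it writes out the Young/trace estimate slightly differently; neither changes the argument.
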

\begin{proof}[Proof of Proposition~\ref{reg}]
We begin by proving, using a Galerkin method, that there exists $u \in L^2(0,T;V)$ such that
\begin{equation} \label{eq987}
\begin{split}
&\forall v \in V, \forall \psi \in \mathcal{C}^1(0,T) \textrm{ such that } \psi(T)=0 \\
&- \int_0^T \int_{\Omega} u(t,x)\cdot v(x)  \psi'(t) dxdt + \int_0^T \int_{\Omega} \nabla u(t,x): \nabla v(x) \psi(t) dx dt \\
&+ \int_0^T \int_{\Gamma_0} q(x) u(t,x) \cdot v(x) \psi(t ) dx dt -\psi(0) \int_{\Omega} u_0(x) \cdot v(x) dx \\ &=  \int_0^T \int_{\Gamma_e} g(t,x) \cdot v(x) \psi(t) dx dt .
\end{split}
\end{equation}
Let $(w_i)_{i \in \mathbb{N}^*}$ be a Hilbert basis of $V$ which is also an orthogonal basis of  $H$. For each $n\in \mathbb{N}^*$, we define an approximate solution as follows: we search $u_n \in V_n = \text{Span} \left\{\left(w_i\right)_{1\leq i \leq n}\right\}$ which satisfies 
\begin{equation} \label{eq25}
\left \{
\begin{aligned} 
		& \int_{\Omega}  u_{n,t} \cdot w_j+ \int_{\Omega} \nabla u_n : \nabla w_j + \int_{\Gamma_0} q u_n \cdot w_j = \int_{\Gamma_e}g \cdot w_j , \forall j \in \{1, \ldots, n\}, \\
         		& u_n(0)= \sum_{k=1}^n (u_0,w_k)_{L^2(\Omega)^d}w_k,       
\end{aligned} 
\right.
\end{equation}
where $u_{n,t}$ denotes $\partial_t u_n$.
\\
\\
Let $t  \in (0,T)$. We decompose $u_n(t,.)$ in the Hilbert basis: 
\begin{equation*}
u_n(t,.)= \sum_{i=1}^n \xi_i(t) w_i. 
\end{equation*}
We denote by
\begin{equation*}
A= \displaystyle\left( \int_{\Omega} w_i(x) \cdot w_j(x)dx\right)_{1\leq i,j \leq n}
\end{equation*} 
\begin{equation*}
B=\left( \int_{\Omega} \nabla w_i (x): \nabla w_j(x) + \int_{\Gamma_0}q(x) w_i(x) \cdot w_j(x)dx\right)_{1\leq i,j \leq n}
\end{equation*}
\begin{equation*}
\xi(t)= (\xi_i(t))_{1 \leq i \leq n}
\end{equation*}
and \begin{equation*}
L(t)= \left(\int_{\Gamma_e}g(t,x) \cdot w_i(x)dx\right )_{1 \leq i \leq n}.
\end{equation*}
We can rewrite system~\ref{eq25} in the form: 
\[
\left \{ 
\begin{aligned}
& A \xi'(t) + B \xi(t) = L(t), \\
&\xi(0)=((u_0,w_i)_{L^2(\Omega)^d})_{1\leq i \leq n}.    
\end{aligned}
\right.
\] 
Since the matrix A is invertible, the system has a unique global solution $\xi \in H^1(0,T)^d$.
We are now going to prove that there exists a constant $C>0$ independent of $n \in \mathbb{N}^*$ such that:
\begin{equation}
\sup_{t \in (0,T)} \int_{\Omega}{|u_n|^2} + \int_0^T\int_{\Omega}{|\nabla u_n|^2} +\int_0^T \int_{\Omega} |u_n|^2 \leq C.
\label{eq26}
\end{equation}
Multiplying the first equation of $\ref{eq25}$  by $\xi_j$, summing over $j$ for $j=1,\ldots, n$ and then integrating on $(0,t)$, we obtain:
\begin{equation} \label{mem15}
 \int_0^t \int_{\Omega}{ u_{n,t} \cdot u_n} + \int_0^t \int_{\Omega} |\nabla u_n |^2 +  \int_0^t \int_{\Gamma_0} q |u_n|^2
=  \int_0^t  \int_{\Gamma_e}{g \cdot u_n }
\end{equation}
Let $\epsilon >0$. We have, thanks to Cauchy-Schwartz and Young inequalities: 
\begin{align*}
\int_0^t \int_{\Gamma_e} g \cdot u_n & \leq C \int_0^T\int_{\Gamma_e} |g|^2 + \epsilon \int_0^t \int_{\Gamma_e} |u_n|^2 \leq C \int_0^T\int_{\Gamma_e} |g|^2 + \epsilon \int_0^t  \|u_n\|_{H^1(\Omega)^d}^2.
\end{align*}
Choosing $\epsilon$  small enough and using the fact that  $q \geq \alpha$ on $\Gamma_0$, we obtain:
\begin{equation} \label{la3} 
\sup_{t \in (0,T)} \int_{\Omega} |u_n|^2  + \int_0^T \int_{\Omega} |\nabla u_n|^2 +\int_0^T \int_{\Omega} |u_n|^2 \leq  C \left(\int_0^T\int_{\Gamma_e} |g|^2+ \int_{\Omega}{|u_0|^2}\right).
\end{equation}
This gives $\ref{eq26}$.
%
%-----
%-----
%
According to inequality~\ref{eq26}, there exists $u \in L^2(0,T;V)$ such that, up to a subsequence, 
\begin{equation*}
u_n \rightharpoonup u \textrm{ in } L^2(0,T;V).
\end{equation*} 
Let $j \in \mathbb{N}^*$. Multiplying the first equation of $\ref{eq25}$ by $\psi \in \mathcal{C}^1(0,T)$ such that $\psi(T)=0$ then integrating on  $(0,T)$, we get, $\forall n \geq j$: 
\begin{equation} \label{meme}
\begin{split}
&\int_0^T \int_{\Omega}{ u_{n,t}(t,x) \cdot w_j(x) \psi(t) dx dt } + \int_0^T \int_{\Gamma_0} q(x) u_n(t,x) \cdot w_j(x) \psi(t)dx dt \\
+& \int_0^T\int_{\Omega} \nabla u_n(t,x): \nabla w_j(x) \psi(t) dxdt = \int_0^T{\int_{\Gamma_e}g(t,x) \cdot w_j(x) \psi(t)dxdt}.
\end{split}
\end{equation}
Taking into account that:
\begin{align*}
  &   \int_0^T \int_{\Omega}{ u_{n,t}(t,x) \cdot w_j(x) \psi(t) dx dt }  \\
= & - \int_0^T \int_{\Omega}{u_n(t,x) \cdot w_j(x) \psi'(t) dx dt }    - \int_{\Omega}{u_n(0,x) \cdot w_j(x) \psi(0) dx}, \\
\end{align*}
we easily pass to the limit when $n$ goes to infinity in~\ref{meme}.
Remark that this inequality is still valid if we replace $w_j$ by any $v \in V$ by continuity. This ends the proof of the existence of $u \in L^2(0,T;V)$ which satisfies~\ref{mem18} in the distribution sense on $(0,T)$. 
\par
Let us finish the proof of Proposition~\ref{reg} by proving that the initial condition~\ref{cond_ini} is satisfied. Let $v \in V$. We deduce from equality $\ref{eq987}$ that $\frac{d}{dt}(u,v)_{L^2(\Omega)^d}  \in L^2(0,T)$. Consequently, the function $t \to (u(t),v)_{L^2(\Omega)^d} $ is continuous. This gives a sense to $(u(0),v)_{L^2(\Omega)^d} $.
Let $\psi \in \mathcal{C}^1(0,T)$ such that $\psi(T)=0$. Multiplying $\ref{mem18}$ by $\psi$ and then integrating on $(0,T)$, we obtain:  
\begin{equation}  \label{end_ini}
 \begin{split}
 -\int_0^T {(u,v)_{L^2(\Omega)^d} \psi'(t)dt}  &+ \int_0^T{a_q(u,v) \psi(t) dt} \\
&=   (u(0,.),v)_{L^2(\Omega)^d} \psi(0) + \int_0^T l(v) \psi(t)dt,
\end{split}
\end{equation}
where we recall that $a_q$ is defined by \ref{eq5} and with $\displaystyle l(v)= \int_{\Gamma_e} g \cdot v$, for $v \in V$. Comparing equality \ref{end_ini} with equality~\ref{eq987}, we obtain $\psi(0) (u(0,.)-u_0,v)_{L^2(\Omega)^d}  = 0$, for all $v \in V$. By choosing $\psi$ such that $\psi(0)\not=0$, equality \ref{cond_ini} follows.
\end{proof}
%
%
%------------
%------------
%
%------------
%------------
%
We are now able to prove Theorem~\ref{reg2}.
\begin{proof}[Proof of Theorem~\ref{reg2}]
We will begin by proving that $\partial_t u \in L^2(0,T;H)$, then we  will conclude by using the regularity result for the stationary problem from Proposition~\ref{regstat}. \\
Let $t \in (0,T)$. Multiplying the first equation of $\ref{eq25}$  by $\xi_j'$, summing over $j$ for $j=1,\ldots, n$ and then integrating on $(0,t)$, we obtain:
\begin{equation*}
 \int_0^t \int_{\Omega}| u_{n,t}|^2 +  \int_0^t\int_{\Gamma_0}q u_n  \cdot  u_{n,t} + \int_0^t\int_{\Omega} \nabla u_n: \nabla u_{n,t} =  \int_0^t \int_{\Gamma_e}g \cdot  u_{n,t} .
\end{equation*}
We have:
%
%\begin{equation*}
%\int_0^t\int_{\Gamma_0}q u_n  u_{n,t} = \frac{1}{2} \int_{\Gamma_0} q|u_n(t)|^2 -\frac{1}{2}\int_{\Gamma_0}q |u_n(0)|^2,
%\end{equation*}
%%
%\begin{equation*}
% \int_0^t\int_{\Omega} \nabla u_n: \nabla u_{n,t}  = \frac{1}{2} \int_{\Omega} |\nabla u_n(t)|^2 -\frac{1}{2}\int_{\Omega} |\nabla u_n(0)|^2.
%\end{equation*}
%%
%and
%
\begin{equation*}
\int_0^t \int_{\Gamma_e}g \cdot  u_{n,t} = - \int_0^t \int_{\Gamma_e}  \partial_t  g  \cdot u_n -\int_{\Gamma_e}  g(0)  \cdot u_n(0) +\int_{\Gamma_e}  g(t) \cdot u_n(t).
\end{equation*}
Let $\epsilon >0$. Then, thanks to Cauchy-Schwarz and Young inequalities, there exists $C>0$:
\begin{equation*}
\begin{split}
\left| \int_0^t \int_{\Gamma_e} g  \cdot u_{n,t}\right|    \leq  &\int_0^T \int_{\Gamma_e} |  \partial_t  g |^2 + \epsilon \int_0^T \|u_n\|^2_{H^1(\Omega)^d}  + 2 \sup_{t \in (0,T)} \int_{\Gamma_e} |g|^2  \\  &  + \| u_0\|_{H^1(\Omega)^d}^2 + \epsilon \sup_{t \in (0,T)} \|u_n\|^2_{H^1(\Omega)^d} .
\end{split}
\end{equation*} 
If we choose $\epsilon$ small enough, we finally obtain, using  estimate~\ref{la3}: 
\begin{equation} \label{eq111}
\begin{split}
& \sup_{t \in (0,T)} \|u_n\|^2_{H^1(\Omega)^d} +\int_0^T \|u_n\|^2_{H^1(\Omega)^d}+ \int_0^T \int_{\Omega}| u_{n,t}|^2 \\ & \leq  C\left( \|u_0\|^2_{H^1(\Omega)^d} + \int_0^T \int_{\Gamma_e} |  \partial_t  g |^2 +\int_0^T \int_{\Gamma_e} |  g|^2 +\sup_{t \in (0,T)} \int_{\Gamma_e} |g|^2 \right).
\end{split}
 \end{equation}
We deduce that  $(u_n)_{n \in \mathbb{N}^*}$ is bounded in $H^1(0,T;H)\cap  L^{\infty}(0,T;V)$ and therefore $u \in H^1(0,T;H) \cap  L^{\infty}(0,T;V)$. 
\par
To get regularity in space, we use the regularity result stated in Proposition~\ref{regstat} for the stationary problem. To do so, we notice that, for all $t \in (0,T)$, $(u(t),p(t))$ is solution of system \ref{eq3} 
with $f$ and $g$ replaced by $\partial_t u(t)$ and $g(t)$. So, by Proposition~\ref{regstat} applied with $k=0$, since $(\partial_t u,g)$ belongs to $L^2(0,T;L^2(\Omega)^d)\times  L^2(0,T; H^{\frac{1}{2}}(\Gamma_e)^d)$, we deduce that $(u,p) \in L^2(0,T;H^2(\Omega)^d) \times L^2(0,T;H^1(\Omega))$.
\par 
Let us now prove the uniqueness of solution. Assume that $u_1$ and $u_2$ are two solutions and let $w=u_1-u_2$. Then we have for all $v \in V$: 
\begin{equation} \label{eq29}
\int_{\Omega}\partial_t w (t) \cdot v +\int_{\Omega } \nabla w(t): \nabla v + \int_{\Gamma_0} q w(t) \cdot v    = 0 \textrm{, } w(0)=0.
\end{equation}
Taking $v=w(t)$ in~\ref{eq29}, we find:
\begin{equation*}
\frac{1}{2} \frac{d}{dt} \int_{\Omega} |w(t)|^2 + \int_{\Omega} |\nabla w(t)|^2 +\int_{\Gamma_0} q |w(t)|^2=0,
\end{equation*}
that is to say
\begin{equation*}
\int_{\Omega} |w(t)|^2 \leq \int_{\Omega} |w(0)|^2=0 \textrm{, for all } t \in (0,T).
\end{equation*}
So $u_1=u_2$ on $(0,T) \times \Omega$. To conclude, thanks to system $~\ref{eq1}$, we obtain $p_1=p_2$. 
\end{proof}
\clearpage

\bibliography{biblio}
\bibliographystyle{plain}

\end{document}